%
%

\documentclass[a4paper]{amsart} 

\usepackage{latexsym,amsmath,amsfonts,amscd,amssymb}
\usepackage{lscape}
\usepackage{mathdots}
\usepackage{array}
\usepackage{pdfsync}
\usepackage[all]{xy}
\usepackage{amsthm}
\usepackage{multirow}
\usepackage[T1]{fontenc} 
\usepackage{enumitem}
\usepackage{mathtools} 
\usepackage{mathdots} 

\usepackage[hidelinks]{hyperref}
\usepackage{dynkin-diagrams}

\usepackage{pifont} 

\usepackage{tikz}
\usetikzlibrary{matrix}
\usetikzlibrary{cd} 

\usepackage{mathscinet} 

\makeatletter 			
\renewcommand*\env@matrix[1][*\c@MaxMatrixCols c]{%
	\hskip -\arraycolsep
	\let\@ifnextchar\new@ifnextchar
	\array{#1}}
\makeatother

\usepackage{relsize}
\usepackage{afterpage}


\newcommand{\st}{\;|\;}

\newcommand{\cS}{\mathcal{S}}

\newcommand{\Z}{{\mathbb{Z}}}
\newcommand{\Q}{{\mathbb{Q}}}
\newcommand{\R}{{\mathbb{R}}}
\newcommand{\C}{{\mathbb{C}}}
\newcommand{\Oc}{{\mathbb{O}}}

\newcommand{\cA}{\mathcal{A}}

\newcommand{\cJ}{\mathcal{J}}

\newcommand{\cO}{\mathcal{O}}
\newcommand{\cQ}{\mathcal{Q}}
\newcommand{\cR}{\mathcal{R}}

\newcommand{\fa}{\mathfrak{a}}

\newcommand{\fe}{\mathfrak{e}}

\newcommand{\fg}{\mathfrak{g}}
\newcommand{\fh}{\mathfrak{h}}

\newcommand{\fp}{\mathfrak{p}}

\newcommand{\ft}{\mathfrak{t}}

\newcommand{\fso}{\mathfrak{so}}

\newcommand{\fB}{\mathfrak{B}}
\newcommand{\fJ}{\mathfrak{J}}

\newcommand{\al}{\alpha}
\newcommand{\be}{\beta}

\newcommand{\la}{\langle}
\newcommand{\ra}{\rangle}

\newcommand{\Cl}{\mathrm{Cl}}

\newcommand{\PSO}{\mathrm{PSO}}

\newcommand{\GL}{\mathrm{GL}}
\newcommand{\SL}{\mathrm{SL}}
\newcommand{\SO}{\mathrm{SO}}
\newcommand{\Sp}{\mathrm{Sp}}
\newcommand{\SSS}{\mathrm{S}}
\newcommand{\Spin}{\mathrm{Spin}}

\newcommand{\OO}{\mathrm{O}}
\newcommand{\HH}{\mathbb{H}}

\DeclareMathOperator{\Ad}{Ad}

\DeclareMathOperator{\rk}{rk}

\DeclareMathOperator{\Hom}{Hom}

\DeclareMathOperator{\End}{End}

\DeclareMathOperator{\Id}{Id}

\DeclareMathOperator{\Aut}{Aut}

\DeclareMathOperator{\Iso}{Iso}



\newcommand{\X}{}


\newcommand{\cmark}{\ding{51}}%
\newcommand{\xmark}{\ding{55}}%


\theoremstyle{plain}
\newtheorem{theorem}{Theorem}[section]

\newtheorem{lemma}[theorem]{Lemma}
\newtheorem{proposition}[theorem]{Proposition}
\newtheorem{example}[theorem]{Example}

\theoremstyle{definition}
\newtheorem{definition}[theorem]{Definition}


\newenvironment{customthm}[1]
{\innercustomthm}
{\endinnercustomthm}


\newenvironment{customdef}[1]
{\innercustomdef}
{\endinnercustomthm}


\newenvironment{customprop}[1]
{\innercustomprop}
{\endinnercustomthm}

\theoremstyle{remark}

\newtheorem{remark}[theorem]{Remark}

\title[On the Gelfand property for complex symmetric pairs]{On the Gelfand property\\ for complex symmetric pairs}
\author{Roberto Rubio}
\address{Weizmann Institute of Science, Rehovot 76100, Israel}\address{Universitat Aut\`onoma de Barcelona, Bellaterra, Barcelona 08193, Spain}
\email{roberto.rubio@uab.es}

\begin{document}

\begin{abstract}
	We first prove, for pairs consisting of a simply connected complex reductive group together with a connected subgroup, the equivalence between two different notions of Gelfand pairs. This partially answers a question posed by Gross, and allows us to use a criterion due to Aizenbud and Gourevitch, and based on Gelfand-Kazhdan's theorem, to study the Gelfand property for complex symmetric pairs. This criterion relies on the regularity of the pair and its descendants. We introduce the concept of a pleasant pair, as a means to prove regularity, and study, by recalling the classification theorem, the pleasantness of all complex symmetric pairs. On the other hand, we prove a method to compute all the descendants of a complex symmetric pair by using the extended Satake diagram, which we apply  to all pairs. 	
	 Finally, as an application, we prove that eight out of the twelve exceptional complex symmetric pairs, together with the infinite family $(\Spin_{4q+2}, \Spin_{4q+1})$, satisfy the Gelfand property, and state, in terms of the regularity of certain symmetric pairs, a sufficient condition for a conjecture by van Dijk and a reduction of a conjecture by Aizenbud and Gourevitch. 
\end{abstract}

\maketitle

\section{Introduction}

Given an irreducible representation $V$ of a group $G$ and a subgroup $H\subseteq G$, one very often asks how the representation $V$ decomposes, or branches, into irreducible representations of $H$. If, under suitable hypothesis, the trivial representation of $H$ appears at most once, then  $(G,H)$ is called a Gelfand pair. The applications of Gelfand pairs range from harmonic analysis \cite[\S 6]{dieudonne} to number theory \cite{gross}.

In this work we focus on a general definition of a Gelfand pair (Definition \ref{def:GP}), where $G$ and $H$ are reductive algebraic groups, and certain not necessarily finite-dimensional or unitary representations (the class of Casselman-Wallach representations) are considered. Other weaker definitions combine the representation with its contragredient (which we will refer to as Gelfand pairs \`a la Gross, Definition \ref{def:GP-Gross}) or deal with unitary representations (which we will call unitary Gelfand pairs, Definition \ref{def:GP-uni}). 

From the work of Harish-Chandra, characters for infinite-dimensional representations are defined as linear functionals on rapidly decreasing, or Schwartz, functions (see, for instance, \cite[Ch. 8]{wallach}). The theory of distributions plays an important role for Gelfand pairs too. By using relative characters, distributional criteria were proved for the weaker versions of the Gelfand property, as pioneered by Gelfand and Kazhdan in \cite{gelfand-kazhdan}, leading to a criterion for Gelfand pairs \`a la Gross in \cite{gross} for non-archimedean fields, whereas for unitary representations in archimedean fields, it is due to Thomas \cite{thomas}. A general version of this criterion for archimedean fields was later stated by Sun and Zhu \cite{sun-zhu}. 

A natural candidate for Gelfand pairs are symmetric pairs $(G,H)$, where $H\subseteq G$ is the subgroup of fixed points of an involution $\theta:G\to G$. Actually, it was conjectured by van Dijk that all complex symmetric pairs are unitary Gelfand pairs \cite[Conj. 2]{vanDijk-08}. In this direction, Aizenbud and Gourevitch used algebro-geometric techniques to provide a generalization of Harish-Chandra descent and turn Gelfand-Kazhdan's theorem into a more easily computable criterion \cite{ag-duke} for symmetric pairs that are moreover stable\footnote{or good, in the notation of \cite{ag-duke}.} (closed $H\times H$-orbits in $G$ are preserved by the anti-involution $g\mapsto \theta(g)^{-1}$). Namely: if a stable pair $(G,H)$ and its descendants (centralizers of certain semisimple elements) are regular ($H$-invariant distributions supported on the nilpotent cone are also invariant under the action of admissible elements), then, provided the existence of a certain anti-involution, it is a Gelfand pair. As an application, this criterion, together with a case-by-case proof of the equivalence between Gelfand pairs and Gelfand pairs \`a la Gross, was used to show that, for any local field $F$, the pairs $(\GL_{n+k}(F), \GL_n(F) \times \GL_k(F))$ and $(\GL_n(E), \GL_n(F))$, for $E$ a quadratic extension of $F$, are Gelfand pairs. Furthermore, the same was done in \cite{ag-transactions} to prove that the complex pairs $(\GL_n,\OO_n)$ and  $(\OO_{n+m},\OO_n \times \OO_m)$ are Gelfand pairs (it had already been proved that $(\SO_n,\SO_{n-1})$ is a unitary Gelfand pair \cite{aparicio-vanDijk} and a Gelfand pair \cite{ags-09}). It was conjectured by Aizenbud and Gourevitch \cite[Conj. 4]{ag-duke} that all symmetric pairs are regular, which would in particular imply van Dijk's conjecture, as all complex symmetric pairs are stable. Further evidence for these conjectures are the Gelfand property of the complex pair $(\GL_{2n},\Sp_{2n})$ proved in \cite{sayag}, and the regularity of nice symmetric pairs proved in \cite{aizenbud-13}. However, there have not been any other further progress, and results on exceptional symmetric pairs are scarce: the niceness, and hence regularity, of six of them was deduced in \cite{aizenbud-13}, and even in the real case, only the rank-one real symmetric pairs $(F_4^{-20},\Spin(1,8))$ and $(F_4^4,\Spin(4,5))$ were shown to be unitary Gelfand pairs \cite{vanDijk-86}.
 
The present work offers group-theoretic techniques to advance on these conjectures by proving an equivalence of two notions of a Gelfand pair, defining the notion of a pleasant pair and using it to prove the regularity of many symmetric pairs, and introducing and applying a method to compute descendants via the extended Satake diagram. As an application, eight exceptional pairs and an infinite $\Spin$ family are proved to be Gelfand pairs, and the conjectures are reduced to a statement about the regularity of certain pairs.

First, we prove an equivalence of the notions of Gelfand pair and Gelfand pair \`a la Gross for certain pairs of connected complex groups.
\begin{customthm}{\ref{theo:eqGP1-GP2}}
	Let $(G,H)$ be a pair of complex connected reductive groups with $H\subseteq G$ and $G$ simply connected. The pair $(G,H)$ is a Gelfand pair if and only if it is a Gelfand pair \`a la Gross.
\end{customthm}
{\noindent This answers a question posed by Gross \cite[\S 4]{gross} for the groups relevant for our work. It moreover suggests that we can systematically use the Aizenbud-Gourevitch criterion to prove the Gelfand property of complex symmetric pairs $(G,H)$. Namely, if a stable (Definition \ref{def:stable}) symmetric pair $(G,H)$ and all its descendants (Definition \ref{def:descendant}) are regular (Definition \ref{def:regular}), then $(G,H)$ is a Gelfand pair. In order to use it, two important observations are made:}
\begin{itemize}
	\item the fixed-point subgroup by $\theta$ of a simply connected group is connected and any complex symmetric pair is stable, so the Aizenbud-Gourevitch criterion can be used on symmetric $(G,H)$ with $G$ simply-connected to prove the Gelfand property (Proposition \ref{prop:criterion-complex}).
	\item the Gelfand property for symmetric $(G,H)$ with $G$ simply-connected implies the Gelfand property for the symmetric pairs corresponding to quotients of $G$ by finite central subgroups (Lemma \ref{lem:quot-sym-pair}). 
\end{itemize}

Secondly, we formally introduce and study the notion of a pleasant pair as a way to prove regularity. Set $\cA_\theta=\{ g \in G \st \theta(g)\in g Z(G) \}$, with $Z(G)$ the centre of~$G$.
\begin{customdef}{\ref{def:pleasant}}
	We say that a pair $(G,H,\theta)$ is \textbf{pleasant} when $$ \Ad \cA_\theta \subseteq \Ad H,$$
or, equivalently written in terms of the involution $\theta$, when,
$$ (\Ad G)^\theta \subseteq  \Ad G^\theta.$$
\end{customdef} {\noindent Here we denote by $\Ad:G\to \Aut G$ the action by conjugation.  By the definition of regularity, a pleasant pair is immediately regular. We then study the pleasantness of all complex symmetric pairs by means of the classification theorem. Our results are summed up in Tables \ref{tab:pleasant-nice}, \ref{tab:pleasant-nice2} and \ref{tab:pleasant-nice3}, where it can be seen that the concept of pleasant pair is especially helpful for exceptional pairs and certain $\Spin$ pairs. For convenience we use the notation of the Lie types even for the classical cases (this notation can be read from Table \ref{tab:cla-des}), and use abbreviations like $BD$ for orthogonal and Spin groups (Lie types $B$ and $D$).}
 
Thirdly, we consider the extended Satake diagram and use it to describe the  descendants of symmetric pairs, where $P$ denotes $\{ g\theta(g)^{-1} \st g\in G\}$.
\begin{customthm}{\ref{theo:visual-descendants}}
	The Satake diagrams of descendants $(G_x,H_x,\theta_{|G_x})$ for semisimple $x\in P$ are (possibly disconnected) Satake diagrams obtained by erasing at least one white node (and its incident edges) from the extended Satake diagram of $(G,H,\theta)$ in such a way that nodes connected by a bar are both kept or erased.
\end{customthm}
 {\noindent All descendants are actually computed in Tables \ref{tab:cla-des} and \ref{tab:exc-des}.}
  
 Finally, we combine all these results to deduce the Gelfand property for eight exceptional symmetric pairs, as well as for the infinite family $(\Spin_{4q+2},\Spin_{4q+1})$.
\begin{customthm}{\ref{theo:8-Gelfand}}
	The complex symmetric pairs $(G_2,A_1+A_1)$, $(F_4,B_4)$, $(F_4,C_3+A_1)$, $(E_{6},F_4)$, $(E_{6},C_4)$, $(E_6,F_4)$, $(E_6,A_5+A_1)$, $(E_{7},A_7)$ and $(E_8,D_8)$, together with the infinite family $(\Spin_{4q+2},\Spin_{4q+1})$, are Gelfand pairs.
\end{customthm}

{\noindent The proof for the remaining exceptional complex symmetric pairs is reduced to the proof of regularity of just one exceptional pair and some low-rank classical ones.}
\begin{customprop}{\ref{prop:exceptional-Gelfand}}
	All exceptional complex symmetric pairs are Gelfand pairs if $(D_4,B_3)$, $(D_4,A_3+\C)$,  $(D_5,D_4+\C)$, $(D_6,A_{5}+\C)$, $(D_7,B_5+B_1)$,  $(E_7,E_6+\C)$ and $(D_8,D_6+D_2)$ are regular. 
\end{customprop}

{\noindent Moreover, we give, in terms of the regularity of some pairs, a sufficient condition for van Dijk's conjecture, and a reduction of Aizenbud-Gourevitch conjecture.}
\begin{customprop}{\ref{prop:conjectures}}
	All complex symmetric pairs are regular (Aizenbud-Gourevitch conjecture) and Gelfand pairs (implying van Dijk's conjecture) if the families of pairs $(D_r,A_{r-1}+\C)$, $(C_{2r},C_r+C_r)$, the families $(\Spin_{r+s},\Spin_r \times_{\Z_2} \Spin_s)$ for $|r-s|>2$, and $r,s$ even if $r+s\neq 4q+2$,, and the pair $(E_7,E_6+\C)$ are regular. 
\end{customprop}

{\noindent We note here that the pairs $(D_r,A_{r-1}+\C)$ and  $(C_{2r},C_r+C_r)$ (corresponding to the classical pairs $(\SO_{2r},\GL_r)$ and $(\Sp_{4r},\Sp_{2r}\times \Sp_{2r})$) were already well known to be challenging cases for the Gelfand property. Our work highlights the crucial importance of looking also at $\Spin$ pairs and not just orthogonal ones.

	 An answer to the remaining part of van Dijk's and Aizenbud-Gourevitch conjectures will require different techniques (perhaps finer distributional criteria than those in \cite{ag-duke}) in order to prove the regularity of the pairs in Proposition \ref{prop:conjectures}, and is a long-term joint project with Carmeli. The study of other fields (starting with the equivalence of Gelfand pair notions and the computation of descendants) is also a natural continuation of the present work.

The paper is structured as follows. In Section 2, the different notions of Gelfand pairs are introduced and the equivalence between two of them is proved. In Section 3, the Aizenbud-Gourevitch criterion, together with the  notions of descendants and regularitiy, are recalled, and we make and prove the necessary considerations in order to apply the criterion to our case. Section 4 starts with the introduction of the notion of pleasantness, together with several criteria to prove it. We then recall the classification of complex symmetric pairs by means of Satake diagrams (Section \ref{sec:Satake}) and use it to study the pleasantness of all pairs (Sections \ref{sec:classical-pleasant}, \ref{sec:spin-pairs} and \ref{sec:exceptional-pleasant}). This includes recalling concrete realizations of Spin (Section \ref{sec:spin-pairs}) and exceptional (Section \ref{sec:exceptional-lie-groups}) groups. The results, together with a list of nice pairs, are summed up in Section \ref{sec:nice-summary}. In Section \ref{sec:visual} we review the structure of centralizers of semisimple elements and prove a result about the computation of descendants of symmetric pairs, which is applied to all pairs (Tables \ref{tab:cla-des} and \ref{tab:exc-des}), with special attention to exceptional and Spin pairs (Section \ref{sec:exceptional-spin-descendants}). Finally, Section \ref{sec:gelfand-pairs-conjectures} contains the applications of our previous results to prove the Gelfand property for exceptional and Spin pairs, together with a sufficient condition for van Dijk's conjecture and a reduction of Aizenbud-Gourevitch conjecture.

\textbf{Acknowledgments:} I would like to express my gratitude to Avraham Aizenbud and Dmitry Gourevitch for introducing me to this subject through many insightful discussions. Thanks also to Shachar Carmeli, Demetris Deriziotis, Gerrit van Dijk, James Humphreys and Itay Glazer for pointing out references or helpful discussions, and to Siddhartha Sahi and Eitan Sayag  for their interest in this project. Additionally, D. Gourevitch and S. Carmeli read a first version of this manuscript, for which I am also grateful.

This work has been possible thanks to the ERC StG grant 637912, ISF
grant 687/13, a grant from the Minerva foundation, and the MSCA project 750885 GENERALIZED.

\section{The notion of a Gelfand pair and a theorem of equivalence}

This section recalls several notions of a Gelfand pair and establishes an equivalence between two of them. This equivalence is crucial to our work and answers, in the complex case, a question posed by Gross \cite[\S 4]{gross}. 

\subsection{Various Gelfand properties}

We work over the complex numbers, although the definitions of this section are valid for an archimedean field and are easily adapted to the non-archimedean case.

Let $G$ be a reductive algebraic group and $H\subseteq G$ a subgroup. We consider the class of irreducible Casselman-Wallach representations (see  \cite{sun-zhu} and \cite{ags-compositio}).

\begin{definition}
	An irreducible representation $E$ of $G$ is said to be irreducible Casselman-Wallach when it is 
	\begin{itemize}
		\item Fr\'echet, that is, $E$ is a Fr\'echet vector space and the action of $G$ is continous.
		\item smooth, that is, for each $v\in E$, the map $g\mapsto \pi(g)(v)$ is smooth.
		\item of moderate growth in the sense of \cite{casselman-89} (for every  seminorm  $\rho$  on  $E$  there  exist  a positive  integer $N$  and  seminorm  $\nu$  such  that  $||\pi(g)(v)||_{\rho}\leq ||g||^N ||v||_{\nu}$ for $v\in E$ and $g\in G$, with $||g||$ coming from a suitable embedding of $G$). 
		\item admissible, that is, the multiplicity on $E$ of any irreducible representation of a maximal compact subgroup $K\subset G$ is finite.
	\end{itemize}
\end{definition}
\begin{remark}
	In general, one should also ask for $Z(\fg^\C)$-finiteness (a finite codimensional ideal of $Z(\fg^\C)\subset U(\fg^\C)$, with $U(\fg^\C)$ the universal envelopping algebra, annihilates $E$). This property is automatically satisfied for irreducible representations. 
\end{remark}

We then have the following definition.

\begin{definition}\label{def:GP}
	We say that $(G,H)$ is a Gelfand pair if any irreducible Casselman-Wallach representation $(\pi,E)$ of $G$ is multiplicity free for the trivial representation of $H$. Namely,
	$$ \dim \Hom_H(E,\C)\leq 1.$$ 
\end{definition}

An a priori weaker Gelfand property was introduced by Gross, without being ``able to establish the equality in the general case'', and because it was ``the one that arises most naturally''. It is indeed this property the one that admits the Aizenbud-Gourevitch criterion we shall introduce in Section \ref{sec:Aizenbud-Gourevitch}.  Let $E'\subseteq E^*$ denote the contragredient representation, which consists of the smooth linear forms.

\begin{definition}\label{def:GP-Gross}
	We say that $(G,H)$ is a Gelfand pair \`a la Gross if any irreducible Casselman-Wallach representation $(\pi,E)$ of $G$ satisfies
	$$ \dim \Hom_H(E,\C)\cdot \dim \Hom_H(E',\C) \leq 1.$$ 
\end{definition}

There is yet another commonly used Gelfand property in the literature (\cite{thomas}, \cite{vanDijk-86}), which will not be used directly in this work, but it is important to understand the state of the art.
	
\begin{definition}\label{def:GP-uni}
	We say that $(G,H)$ is a unitary Gelfand pair if any irreducible unitarizable admissible representation on a Hilbert space $(\pi,E)$ satisfies
	$$ \dim \Hom_H(E^{\infty},\C)\leq 1.$$ 
\end{definition}	
	
	Any Gelfand pair is a Gelfand pair \`a la Gross, and any Gelfand pair \`a la Gross is a unitary Gelfand pair.
	
\begin{remark}\label{rem:GP1-2-3}
The notion in Definition \ref{def:GP-uni} was referred to as a generalized Gelfand pair (generalized in the sense that $H$ need not be compact) but we find unitary Gelfand pair more descriptive nowadays. In \cite{ags-compositio}, Definition \ref{def:GP} was labelled as GP1, whereas Definition \ref{def:GP-Gross} was labelled as GP2 and Definition \ref{def:GP-uni} as GP3.
\end{remark}

\subsection{A theorem about the equivalence of two notions}
\label{sec:two-notions}

In this section we prove that Definitions \ref{def:GP} and \ref{def:GP-Gross} are equivalent over the field of complex numbers when $H$ is reductive. In the notation of Remark \ref{rem:GP1-2-3}, we prove that GP1 is equivalent to GP2. For this we need to recall the notion and a theorem about admissible morphisms.

\begin{definition}[\cite{ag-duke}, Def. 6.0.1]\label{def:admissible}
	Let $\pi$ be an action of a reductive group $G$ on a smooth affine variety $X$. We say that an algebraic automorphism $\sigma$ of $X$ is $G$-admissible if it
	\begin{itemize}
		\item normalizes $\pi(G)$, that is, $\sigma \pi(G) \sigma^{-1} \subseteq \pi(G)$.
		\item squares to an element of the action, that is, $\sigma^2\in\pi(G)$.
		\item  preserves closed $G$-orbits $\cO\subset X$, that is,  $\sigma(\cO)=\cO$.	
	\end{itemize}
\end{definition}

\begin{theorem}[\cite{ag-duke}, Thm. 8.2.1]\label{thm:ag-duke-821}
	Let $G$ be a reductive group, and let $\sigma$ be an $(\Ad G)$-admissible antiautomorphism of $G$. Let $\theta$ be the automorphism of G defined by $\theta(g) := \sigma(g^{-1})$. Let $(\pi,E)$ be an irreducible Casselman-Wallach representation of $G$. Then $E'\simeq E^\theta$, where 
	$E^\theta$ is $E$ twisted by $\theta$. 
\end{theorem}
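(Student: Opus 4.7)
My plan is to prove $E'\simeq E^\theta$ by comparing Harish-Chandra distribution characters. First I would observe that $E^\theta$ is itself an irreducible Casselman-Wallach representation: $\theta$ is the composition of the antiautomorphism $\sigma$ with inversion, hence an algebraic automorphism of $G$, so twisting by $\theta$ preserves all the defining properties (Fr\'echet, smooth, moderate growth, admissibility, irreducibility). Since an irreducible Casselman-Wallach representation is determined up to isomorphism by its distribution character (a standard consequence of the Casselman-Wallach globalization together with Harish-Chandra's character theory), it suffices to prove the equality of distributions $\chi_{E'}=\chi_{E^\theta}$ on $G$.

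Next I would perform the routine character calculation
$$\chi_{E'}(g)=\chi_E(g^{-1}), \qquad \chi_{E^\theta}(g)=\chi_E(\theta(g))=\chi_E(\sigma(g^{-1})),$$
which reduces the claim to the single distributional identity $\chi_E\circ\sigma=\chi_E$ on $G$. By Harish-Chandra's regularity theorem, $\chi_E$ is represented by a locally integrable function that is real-analytic on the open dense regular semisimple locus $G^{\mathrm{rs}}$, so it is enough to verify the pointwise identity $\chi_E(\sigma(g))=\chi_E(g)$ for every $g\in G^{\mathrm{rs}}$. For such $g$ the orbit $\Ad(G)\cdot g$ is closed, since in a reductive group the closed $\Ad G$-orbits are exactly the semisimple conjugacy classes; the third clause of Definition \ref{def:admissible} then forces $\sigma(g)\in\Ad(G)\cdot g$, and the $\Ad G$-invariance of $\chi_E$ yields the desired equality.

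The step I expect to be most delicate is the careful invocation, in the archimedean Casselman-Wallach setting, of the two analytic inputs: Harish-Chandra regularity of the character and the recovery of an irreducible Casselman-Wallach representation from its distribution character. Both are classical for real reductive groups (hence for complex reductive groups viewed via restriction of scalars), but must be applied to the precise class of representations of Definition \ref{def:GP}. The remaining admissibility clauses, that $\sigma$ normalizes $\Ad G$ and that $\sigma^2\in\Ad G$, are not used directly in the character computation above; they are structural hypotheses ensuring that the $\sigma$-twist interacts consistently with $\Ad G$-equivariance and that $\theta$ behaves as an involution modulo inner automorphisms, which is what the downstream Gelfand-Kazhdan criterion actually exploits.
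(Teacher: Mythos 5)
The paper offers no proof of this statement---it is imported verbatim from \cite{ag-duke}, Thm.~8.2.1---so there is no internal argument to compare against; your proof is the standard Gelfand--Kazhdan character computation, which is essentially the argument given in that cited source. It is correct as written: the only admissibility clause actually needed is preservation of closed orbits, which (since closed $\Ad G$-orbits are the semisimple classes) puts $\sigma(g)$ in the conjugacy class of $g$ on the regular semisimple locus, and Harish-Chandra regularity together with the fact that the distribution character determines an irreducible Casselman--Wallach representation finishes the reduction.
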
 

Consequently, if $H\subset G$ is a reductive subgroup, and we have an $(\Ad G)$-admissible antiautomorphism $\sigma$ of $G$ such that $\sigma(H) = H$, then 
$$\dim \Hom_H(E,\C)\cdot \dim \Hom_H(E',\C)=(\dim \Hom_H(E,\C))^2,$$
and the equivalence between Definitions \ref{def:GP} and \ref{def:GP-Gross} follows.

\begin{example}
	For $(\SL_n,\SO_n)$ or $(\SL_{p+q},\SSS(\GL_p\times \GL_q))$, the transposition map gives the equivalence between the notions of Gelfand pair and Gelfand pair \`a la Gross. The corresponding automorphism $\theta$ is the inverse-transpose map.
\end{example}

The following theorem establishes the equivalence between the notions of Gelfand pair and Gelfand pair \`a la Gross for complex reductive groups.

\begin{theorem}\label{theo:eqGP1-GP2}
	Let $(G,H)$ be a pair of complex connected reductive groups with $H\subseteq G$ and $G$ simply connected. The pair $(G,H)$ is a Gelfand pair if and only if it is a Gelfand pair \`a la Gross. In other words, GP1 is equivalent to GP2.
\end{theorem}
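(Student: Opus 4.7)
The implication GP1$\Rightarrow$GP2 is immediate, since bounding $\dim\Hom_H(E,\C)\le 1$ for every irreducible Casselman--Wallach $E$ (including $E'$, which is itself of the same type) yields the GP2 product inequality. My plan for the reverse implication is to invoke Theorem~\ref{thm:ag-duke-821}: it suffices to exhibit an $(\Ad G)$-admissible antiautomorphism $\sigma:G\to G$ with $\sigma(H)=H$. Given such $\sigma$, setting $\theta(g):=\sigma(g^{-1})$ produces an automorphism of $G$ stabilizing $H$, and the cited theorem yields $E'\simeq E^\theta$ for every irreducible Casselman--Wallach $(\pi,E)$. Since $\theta$ stabilizes $H$, the twist functor induces a linear isomorphism $\Hom_H(E,\C)\simeq\Hom_H(E^\theta,\C)$, so
\[\dim\Hom_H(E,\C) = \dim\Hom_H(E',\C).\]
The GP2 inequality then becomes a bound on the square of a single nonnegative integer, forcing it to be at most $1$, which is GP1.

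The task thus reduces to constructing an antiautomorphism $\sigma$ that is simultaneously $(\Ad G)$-admissible and $H$-preserving. I would use a Chevalley anti-involution of $G$ matched to $H$ via compatible compact real forms. By a theorem of Mostow, one can choose a maximal compact subgroup $K_G\subseteq G$ such that $K_H:=K_G\cap H$ is a maximal compact subgroup of $H$. Pick a maximal torus $T_H\subseteq K_H$ and extend it to a maximal torus $T_G\subseteq K_G$; the complexifications give nested maximal tori $T_H^\C\subseteq T_G^\C$ of $H$ and $G$. One then constructs on $\mathfrak g$ the Chevalley anti-involution adapted to this pinning---acting as the identity on $\mathfrak t_G^\C$ and swapping opposite root spaces in the standard normalization---and integrates it to an algebraic antiautomorphism $\sigma:G\to G$; the integration is possible because $G$ is simply connected. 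That $\sigma^2=\id\in\Ad G$ and that $\sigma$ preserves every closed $\Ad G$-orbit (i.e.\ every semisimple conjugacy class, with representatives in $T_G^\C$ on which $\sigma$ is trivial) are classical Chevalley-theoretic facts.

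The main obstacle is showing $\sigma(H)=H$. This is delicate because the root system of $H$ with respect to $T_H^\C$ need not be a subsystem of the root system of $G$, so a generic Chevalley anti-involution of $G$ will not restrict to one of $H$, and conversely starting from a Chevalley anti-involution of $H$ gives no obvious extension. The role of the Mostow-compatible compact forms $K_H\subseteq K_G$ is precisely to provide a single real structure in which the Cartan data of $\mathfrak h$ and $\mathfrak g$ are simultaneously adapted; this compatibility makes the associated Chevalley anti-involution of $\mathfrak g$ stabilize the reductive subalgebra $\mathfrak h$ and, consequently, the connected subgroup $H$.
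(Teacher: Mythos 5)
Your overall architecture coincides with the paper's: GP1 $\Rightarrow$ GP2 is formal, and for the converse you reduce, exactly as the paper does, to exhibiting an $(\Ad G)$-admissible antiautomorphism $\sigma$ of $G$ with $\sigma(H)=H$, so that Theorem~\ref{thm:ag-duke-821} gives $\dim\Hom_H(E,\C)=\dim\Hom_H(E',\C)$ and the GP2 inequality forces a square to be at most $1$. Your candidate $\sigma$ (trivial on a maximal torus $T\supseteq T_H$, swapping opposite root spaces of a Chevalley basis, integrated to $G$ using simple connectedness) and the verification of admissibility are also the paper's.

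The genuine gap is at the point you yourself flag as ``the main obstacle'': you never prove $\sigma(H)=H$. Mostow's theorem gives nested compact forms $K_H\subseteq K_G$, hence that the compact \emph{conjugation} $\tau$ of $\fg$ stabilizes $\fh$; but the Chevalley antiautomorphism is not determined by the compact form alone --- it depends on the torus and on the normalization of root vectors, and is (up to sign) the composition of $\tau$ with a split real structure of $\fg$, which has no a priori reason to meet $\fh$ in a real form of $\fh$. So the sentence ``this compatibility makes the associated Chevalley anti-involution of $\fg$ stabilize $\fh$'' is an assertion, not an argument, and it is exactly the nontrivial content of the theorem: as you correctly observe, for a special (non-regular) subalgebra the root vectors of $\fh$ are not root vectors of $\fg$. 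The paper closes this gap by a direct weight computation: a root vector $Y_\be$ of $\fh$ relative to $T_H^{ss}$ is written as $Y_\be=Z+\sum_i\lam_i X_{\al_i}$ with $Z\in\ft$; the relation $[U,Y_\be]=\be(U)Y_\be$ for $U\in T_H^{ss}$ forces $Z=0$ and $\al_i|_{T_H^{ss}}=\be$ for every $i$; then $d\sigma(Y_\be)=-\sum_i\lam_i X_{-\al_i}$ is shown to be a weight vector of weight $-\be$ for $T_H^{ss}$ and hence to lie in $\fh$, using that $\fh$ is reductive. Some such computation (or an actual proof that a Chevalley involution can be chosen compatibly with the subgroup, in the spirit of the Adams--Vogan references the paper cites) is indispensable; without it the reverse implication is not established.
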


\begin{proof}

	Let $T_H$ be a maximal torus of $H$. Take any maximal torus $T\subset G$ containing $T_H$, consider a Chevalley basis $\{X_\al\}\subset \fg$  (that is, such that $\alpha([X_\al,X_{-\al}])=2$) with respect to $T^{ss}:=T\cap G^{ss}$, where `$ss$' denotes the semisimple part,  and let $\sigma$ be the antiautomorphism defined by $\sigma(t)=t$ for $t\in T$ and 
	\begin{equation}\label{eq:def-sigma}
	d\sigma(X_{\al})=-X_{-\al}.
	\end{equation}
	Such an antiautomorphism exists as $G$ is simply connected.
	
	We first check that $\sigma$ is $(\Ad G)$-admissible for the action $\Ad$ of $G$ on itself by conjugation. We see that $\sigma$ normalizes the conjugation $\Ad g$ by $g\in G$. Indeed, for $x\in G$,
	$$ (\sigma \circ (\Ad g)\circ \sigma^{-1})(x)= \sigma(g\sigma(x)g^{-1}) = (\Ad {\sigma(g^{-1})})(x).$$ 
	We have that $\sigma$ is an involution, so $\sigma^2=\Id\in \Ad G$. On the other hand,  the orbits are the conjugacy classes, and the closed orbits are the semisimple orbits. The action of $\sigma$ does preserve the orbits, as $\sigma$ is the identity on $T$.
	
	Secondly, we see that $\sigma(H)=H$. As $H$ is connected, it suffices to check, for $\fh$ the Lie algebra of $H$, that $d\sigma(\fh)=\fh$. Consider a Chevalley basis $\{Y_\be\}\subset \fh$ with respect  to $T_H^{ss}:=T_H\cap H^{ss}$. Trivially, $\sigma(T_H)=T_H$, so it suffices to show that $d\sigma(Y_\be)$ belongs to $\fh$. The main issue here is that a root vector $Y_\be$ is not necessarily one of the $X_\al$. This is only the case for regular subalgebras, but not in special subalgebras (see, e.g., \cite{dynkin}).
	
	We express $Y_\beta$ in terms of the Lie algebra $\ft$ of the torus $T$ and the root vectors of~$\fg$,
	$$ Y_\beta = Z + \sum_i \lambda_i X_{\al_i},$$
	with $Z\in \ft$. By $[U,Y_\beta]=\beta(U) Y_\beta$ for $U\in T_H^{ss}$, we have that $Z=0$ and $\al_i(U)=\be(U)$ for $U\in T_H^{ss}$. In other words, $Y_\be$ is a linear combination of the $X_{\al}$ such that the restriction of $\al$ to $T_H^{ss}$ is $\be$. 
	
	We prove now that $d\sigma(Y_\beta)=Y_{-\beta}$, and hence $Y_\beta\in \fh$, as $H$ is reductive ($\beta$ is a root if and only if $-\beta$ is a root). From
	$$ d\sigma (Y_\beta) = d\sigma(\sum_i \lambda_i X_{\al_i} ) = - \sum_i \lambda_i X_{-\al_i},$$
	we have, for $U\in T_H^{ss}$,
	\begin{align*}
	[U, d\sigma(Y_\beta)]  & = [U, - \sum_i \lambda_i X_{-\al_i}] = -\sum_i \lambda_i \al_i(U) X_{-\al_i} =  \beta(U) \sum_i \lambda_i X_{-\al_i}\\ & = -\beta(U) d\sigma(Y_\beta),
	\end{align*} 
	so $d\sigma(Y_\beta)=Y_{-\beta}\in \fh$ and consequently $\sigma(H)=H$ and, by Theorem \ref{thm:ag-duke-821} we have the equivalence.
\end{proof}

\begin{remark}
	Both the transposition map and $g\mapsto \sigma(g)$ in the proof of Theorem \ref{theo:eqGP1-GP2} are instances of Chevalley involutions (involutions inverting the elements of a maximal torus, see, e.g., \cite[\S 2]{adams-vogan}) composed with the inversion. Chevalley involutions are not uniquely defined, and the choice of a maximal torus containing a maximal torus of $H$ and the condition \eqref{eq:def-sigma} were crucial for the proof.
\end{remark}

As it is out of the scope of this paper, we leave for future work the proof of this equivalence for other fields. This would require a good understanding of the Satake-Tits classification and an analogue of the Chevalley involution for non-algebraically closed fields (as in \cite{adams-14} for the real case).

\section{The Aizenbud-Gourevitch criterion for symmetric pairs}
\label{sec:Aizenbud-Gourevitch}

Once we have settled the equivalence between the general definition of a Gelfand pair and the one given by Gross, we recall in the first two sections a criterion that will allow us to prove the Gelfand property for symmetric pairs. In Section \ref{sec:covering-quotient} we make some further considerations regarding products and quotients of pairs. 


\subsection{Distributional criterion}

 We denote the space of Schwartz distributions by $\cS^*(M)$ for $M$ a smooth complex algebraic variety.  When $M$ is affine, this is the space of linear functionals on Schwartz functions $\cS(M)$ or rapidly decreasing (together with all its derivatives) functions. Again, more general setups include Nash manifolds and non-archimedean fields. However, in this work we will only refer to $M$ being a group $G$, the vector space $\fp$, its regular elements $\cR \fp$ or the quotient space $\cQ\fp$. Moreover, we will not deal directly with distributions. See \cite{ag-duke} for more details on Schwartz functions and distributions.
 
 
A distributional criterion to prove the Gelfand property for unitary representations (see Remark \ref{rem:GP1-2-3}) was given by Thomas \cite[Thm. E]{thomas}  and used by van Dijk \cite[Crit. 1.2]{vanDijk-86}. This was combined with Gelfand and Kazhdan's work \cite{gelfand-kazhdan} to give a more general criterion.

\begin{theorem}[\cite{ags-compositio}, Thm. 2.3.2]\label{theo:GK-ags}
Let $(G,H)$ be a pair of a reductive group $G$ and a subgroup $H\subseteq G$. Let $\tau$ be an involutive antiautomorphism of $G$ such that $\tau(H)=H$. Suppose that
$$\cS^*(G)^{H\times H}\subseteq \cS^*(G)^\tau.$$
Then, $(G,H)$ is a Gelfand pair in the sense of Gross.
\end{theorem}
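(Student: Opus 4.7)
The plan is to adapt the classical Gelfand--Kazhdan matrix-coefficient strategy to the Casselman--Wallach setting, proceeding by contrapositive. Assume $(G,H)$ fails to be a Gelfand pair in the sense of Gross: there exists an irreducible Casselman--Wallach representation $(\pi,E)$ with
$$\dim\Hom_H(E,\C)\cdot\dim\Hom_H(E',\C)\geq 2,$$
so in particular both factors are nonzero. The strategy is to produce from this data a bi-$H$-invariant distribution on $G$ that is incompatible with the hypothesis $\cS^*(G)^{H\times H}\subseteq \cS^*(G)^\tau$.

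First, I would attach to each pair $(\ell,\ell')\in\Hom_H(E,\C)\times\Hom_H(E',\C)$ a matrix-coefficient distribution on $G$, assembling a bilinear map
$$\Phi:\Hom_H(E,\C)\otimes\Hom_H(E',\C)\longrightarrow \cS^*(G)^{H\times H}.$$
For $f\in\cS(G)$, the integrated operator $\pi(f)$ is well-defined on $E$ thanks to moderate growth, and it smoothes generalized vectors; identifying $\ell'$ with a generalized $H$-invariant vector $v_{\ell'}$ of $E$ via the natural duality extending the pairing $E\times E'\to\C$, one sets
$$T_{\ell,\ell'}(f):=\ell\bigl(\pi(f)\,v_{\ell'}\bigr).$$
Bi-$H$-invariance of $T_{\ell,\ell'}$ is immediate from the $H$-invariance of $\ell$ and $\ell'$, and moderate growth makes $T_{\ell,\ell'}$ a tempered Schwartz distribution. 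A key technical step is to prove that $\Phi$ is injective, which follows from irreducibility of $\pi$ together with a suitable separation-of-matrix-coefficients argument for admissible Casselman--Wallach representations.

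Next, by the hypothesis of the theorem, the image of $\Phi$ lies in $\cS^*(G)^\tau$. Representation-theoretically, via the twisting $E^\tau\cong E'$ suggested by Theorem~\ref{thm:ag-duke-821}, $\tau$-invariance of $T_{\ell,\ell'}$ translates into an identification between $T_{\ell,\ell'}$ and $T_{\hat\ell',\hat\ell}$, where $\hat\ell'\in\Hom_H(E,\C)$ and $\hat\ell\in\Hom_H(E',\C)$ are the images of $\ell'$ and $\ell$ under the induced isomorphism $\Hom_H(E,\C)\cong\Hom_H(E',\C)$. Combined with the injectivity of $\Phi$, this symmetry forces the tensor product $\Hom_H(E,\C)\otimes\Hom_H(E',\C)$ to embed into a one-dimensional $\tau$-symmetric subspace of $\cS^*(G)^{H\times H}$ (via a Schur-type argument for the symmetric bilinear form on the space of $H$-invariant functionals), yielding $\dim\Hom_H(E,\C)\cdot\dim\Hom_H(E',\C)\leq 1$ and contradicting the assumption.

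The main obstacle I expect is the rigorous construction of $T_{\ell,\ell'}$ as a well-defined tempered Schwartz distribution, together with the proof of injectivity of $\Phi$ and the final dimension-counting step, in the archimedean framework. In the non-archimedean case, where $\cS(G)$ is the Hecke algebra of locally constant compactly supported functions, this is Bernstein's transparent algebraic formulation of Gelfand--Kazhdan. The archimedean version, by contrast, requires the full machinery of Fr\'echet, moderate-growth representations and Schwartz analysis on Nash manifolds developed by Casselman--Wallach, du Cloux, and Aizenbud--Gourevitch, since even the basic smoothing property $\pi(f):E^{-\infty}\to E$ and the temperedness of $T_{\ell,\ell'}$ are nontrivial analytic assertions.
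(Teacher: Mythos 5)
The paper does not prove this statement: Theorem~\ref{theo:GK-ags} is imported verbatim from \cite{ags-compositio} (Thm.~2.3.2), so there is no internal proof to compare against. Your proposal does reconstruct the strategy of the cited source --- matrix-coefficient distributions $T_{\ell,\ell'}(f)=\ell(\pi(f)v_{\ell'})$ built by smoothing generalized vectors, bi-$H$-invariance, the hypothesis forcing $\tau$-invariance, and a swap-plus-injectivity argument at the end --- and the final linear algebra is sound \emph{if} the swap relation you assert actually holds: injectivity of $\Phi$ plus $\ell\otimes m=\iota^{-1}(m)\otimes\iota(\ell)$ for an isomorphism $\iota:\Hom_H(E,\C)\to\Hom_H(E',\C)$ does force both dimensions to be at most $1$, since otherwise one produces a nonzero antisymmetric tensor that must vanish.

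The genuine gap is in how you obtain that swap relation. You invoke the identification $E'\simeq E^{\theta}$ ``suggested by Theorem~\ref{thm:ag-duke-821}'', but that theorem requires $\sigma$ to be $(\Ad G)$-admissible (in particular to preserve closed conjugacy classes), which is \emph{not} among the hypotheses of Theorem~\ref{theo:GK-ags}: here $\tau$ is only an involutive anti-automorphism with $\tau(H)=H$ satisfying the distributional inclusion. What the $\tau$-invariance of $T_{\ell,m}$ actually gives, after unwinding $\pi(f\circ\tau)$, is an identity between a matrix coefficient of $\pi$ and a matrix coefficient of the twisted contragredient $\tilde\pi\circ\theta$ with $\theta(g)=\tau(g^{-1})$. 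One must then split into cases: if $\tilde\pi\circ\theta\cong\pi$ you get your $\iota$ and the argument closes; if not, one needs a disjointness statement for matrix coefficients of inequivalent irreducible Casselman--Wallach representations to force $T_{\ell,m}=0$ and contradict injectivity. Neither branch is present in your write-up, and the second requires its own (nontrivial, archimedean) proof. Beyond that, the steps you defer --- that $\pi(f)$ maps generalized $H$-invariant vectors into $E$, that $T_{\ell,m}$ is tempered, and that $\Phi$ is injective --- carry most of the analytic weight in the Casselman--Wallach setting and are exactly where \cite{ags-compositio} spends its effort; naming them as obstacles is honest but does not discharge them.
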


From now on we will be interested in symmetric pairs.

\begin{definition}
	A pair of groups $(G,H)$ is called symmetric if there exists an automorphism $\theta:G\to G$ such that $H=G^{\theta}$. 
\end{definition}

Every symmetric pair comes with the antiautomorphism 
$$\sigma: g\mapsto \theta(g^{-1}).$$
When talking about a symmetric pair $(G,H)$, the maps $\theta$ and $\sigma$ will denote the involution and antiinvolution just described. We will sometimes denote the pair by $(G,H,\theta)$, especially when we use $\theta$ or $\sigma$.

If the hypotheses of Theorem \ref{theo:GK-ags} are satisfied for a symmetric pair with the antiautomorphism $\sigma$, the pair is called a GK pair. 

\begin{definition}[\cite{ag-duke}, Def. 7.1.8]
	A symmetric pair $(G,H,\theta)$ is called a GK-pair if all $H\times H$-invariant Schwartz distributions on $G$ are moreover $\sigma$-invariant.
\end{definition}

Thus, a GK-pair is a Gelfand pair \`a la Gross, by applying Theorem \ref{theo:GK-ags} with $\sigma=\tau$. The notion of a GK-pair admits the application of a generalization of the Harish-Chandra descent which shall result in Theorem \ref{theo:aizenbud-gourevitch}.

\subsection{Regularity, descendants, and the criterion}

We first recall some definitions from Section 7 of \cite{ag-duke}. 

\begin{definition}
	Let $(G,H, \theta)$ be a symmetric pair. An element $g\in G$ is called admissible if 
	\begin{itemize}
		\item the map $\Ad g$ commutes with $\theta$, that is, $g^{-1}\theta(g)\in Z(G)$. 
		\item the restriction $(\Ad g)_{|\fp}$ is $H$-admissible in the sense of Definition \ref{def:admissible}. 
	\end{itemize}
\end{definition}

Let $\cR \fp$ denote the complement of the nilpotent cone of $\fp$, and $\cQ\fp:=\fp/\fp^G$ the quotient of $\fp$ by its $\Ad G$-invariant elements.
	
\begin{definition}\label{def:regular}
A symmetric pair $(G,H,\theta)$ is called \textbf{regular} when for any admissible $g\in G$ such that $\cS^*(\cR\fp))^H \subseteq \cS^*(\cR\fp))^{\Ad g}$ we have 
\begin{equation}\label{eq:regularity}
\cS^*(\cQ\fp)^H \subseteq \cS^*(\cQ\fp)^{\Ad g}.
\end{equation}
\end{definition}

Define the subvariety
$$ P= \{ g\theta(g)^{-1} \st g\in G \}.$$
We give an a priori weaker definition of descendant (compared to \cite[Def. 7.2.2]{ag-duke}), which is more suitable in our setup.

\begin{definition}\label{def:descendant}
	A \textbf{descendant} of a symmetric pair $(G,H,\theta)$ is a symmetric pair
	$$(G_x,H_x,\theta_{|G_x})$$
	for some semisimple $x\in P$, where $G_x$ and $H_x$ denote the centralizer of $x$.
\end{definition}
Note that the map $\theta$ restricts to an automorphism of~$G_x$ since $x\in P$.

\begin{remark}
	The fact that Definition \ref{def:descendant} is an a priori weaker notion follows from \cite[Prop. 7.2.1(i)]{ag-duke} and makes possible a simpler application of Theorem \ref{theo:aizenbud-gourevitch} and Proposition \ref{prop:criterion-complex} below (these results are of course also true in the generality of \cite[Def. 7.2.2]{ag-duke}).
\end{remark}


\begin{definition}\label{def:stable}
	A symmetric pair $(G,H, \theta)$ is called stable if, for any closed $(H \times H)$-orbit $\mathcal{O} \subset G$, we have $\sigma(\cO) = \cO$.
\end{definition}

Harish-Chandra descent (see, for instance, \cite{adams-vogan-92}) describes the behaviour of invariant distributions around a semisimple element $s$ by means of an invariant distribution on the centralizer of $s$ in a neighbourhood of the identity. A generalization of this principle was applied to the distributional criterion, Theorem \ref{theo:GK-ags}, to provide a way to prove the Gelfand property \cite[Thm. 7.4.5]{ag-duke}, which we rename after its authors.

\begin{theorem}[Aizenbud-Gourevitch criterion] \label{theo:aizenbud-gourevitch}
	Let $(G,H,\theta)$ be a stable symmetric pair such that all its descendants (including itself) are regular. Then $(G,H,\theta)$ is a GK-pair.
\end{theorem}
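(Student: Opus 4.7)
The plan is to establish the GK-pair property by induction on $\dim G$, using a Harish-Chandra-type descent to reduce questions about $H\times H$-invariant Schwartz distributions on $G$ to questions about $H$-invariant Schwartz distributions on the Lie algebra $\fp$. The inductive hypothesis says that every stable symmetric pair of strictly smaller dimension, all of whose descendants are regular, is itself a GK-pair; the theorem then follows as soon as one can handle the distributions supported on the nilpotent cone of $\fp$ for the pair itself.

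First, I would pass from $G$ to the subvariety $P$ via the $H\times H$-equivariant Cartan map $\psi\colon G\to P$, $g\mapsto g\theta(g)^{-1}$, and then linearize at a neighborhood of each semisimple point of $P$ using the exponential. Under this passage, the antiinvolution $\sigma$ is tracked to a canonical involution $\tau$ of $P$ that restricts, on each linear slice, to the descent involution coming from $\theta$. Stability of $(G,H,\theta)$ is precisely what is needed here: it ensures that closed $H\times H$-orbits in $G$ are $\sigma$-stable, so that the correspondence between $H\times H$-invariant Schwartz distributions on $G$ and $H$-invariant Schwartz distributions on $P$ preserves the invariance problem intact.

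Next, I would stratify $P$ by closed $H$-orbits of semisimple elements and apply a Schwartz-distributional version of Luna's slice theorem at each orbit. At a semisimple $x\in P$ the slice is modeled on a neighborhood of $0$ in $\fp_x$ with action of $H_x$, and the local invariance problem becomes exactly the GK-pair property for the descendant $(G_x,H_x,\theta|_{G_x})$. For non-central $x$, i.e.\ $G_x\subsetneq G$, the descendant has strictly smaller dimension; using the identity that descendants of descendants are themselves descendants of $(G,H,\theta)$, they are regular by hypothesis, and one verifies that stability is inherited. The inductive hypothesis then applies and furnishes the desired invariance in an $H$-invariant neighborhood of the orbit of $x$. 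An $H$-invariant partition of unity supported on the union of such neighborhoods reduces the global problem to controlling $H$-invariant Schwartz distributions on $\fp$ that are already $\tau$-invariant on the regular locus $\cR\fp$.

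This last step is exactly the content of the regularity of $(G,H,\theta)$ itself: the admissible element $g\in G$ whose action $\Ad g$ on $\fp$ realizes the descended involution $\tau$ satisfies, by the previous step, $\cS^*(\cR\fp)^H\subseteq \cS^*(\cR\fp)^{\Ad g}$, and regularity upgrades this to $\cS^*(\fp)^H\subseteq \cS^*(\fp)^{\Ad g}$, closing the induction. The main obstacle is the Schwartz-distributional Luna slice theorem together with the careful tracking of the antiinvolution $\sigma$ through all identifications; one has to check that on each slice $\sigma$ matches the intrinsic antiinvolution of the descendant, that descendants of descendants are descendants, and that the descendant pair inherits stability. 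These are technical pieces of equivariant algebraic geometry and functional analysis, but once they are in place the induction runs as described.
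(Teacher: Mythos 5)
This theorem is not proved in the paper at all: it is quoted verbatim from Aizenbud--Gourevitch \cite[Thm.~7.4.5]{ag-duke}, so there is no in-paper argument to compare yours against. What you have written is a plausible reconstruction of the strategy of the original proof in that reference --- generalized Harish-Chandra descent via the Cartan map $g\mapsto g\theta(g)^{-1}$ and a Luna-slice/linearization argument, an induction over descendants, and the regularity hypothesis used exactly once at the end to pass from $\cS^*(\cR\fp)^H\subseteq\cS^*(\cR\fp)^{\Ad g}$ to $\cS^*(\fp)^H\subseteq\cS^*(\fp)^{\Ad g}$. The overall architecture is right, and your use of stability (to guarantee that closed $H\times H$-orbits are $\sigma$-stable so that the localization respects the invariance problem) and of the fact that descendants of descendants are descendants are both genuine ingredients of the original argument.

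Two caveats. First, one step is stated imprecisely: at a semisimple $x\in P$ the local problem after linearization is \emph{not} literally the GK property of the descendant $(G_x,H_x,\theta|_{G_x})$ (a statement about $H_x\times H_x$-invariant distributions on the group $G_x$), but rather an invariance statement for $H_x$-invariant Schwartz distributions on the linear slice $\fp_x$ under the action of admissible elements; the induction in \cite{ag-duke} runs on that linearized statement, and the descendants are only required to be regular, not to be GK pairs or even stable. As written, your inductive hypothesis (``every smaller stable pair with regular descendants is GK'') does not plug directly into the local problem you derive. Second, everything that makes the theorem hard --- the Frobenius descent and localization principle for Schwartz distributions on Nash manifolds (a literal $H$-invariant partition of unity is not available in this category), and the verification that $\sigma$ matches the intrinsic anti-involution of each slice through the identifications --- is deferred as ``technical pieces''. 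So this is a correct outline of the known proof rather than a proof; as a blind attempt it identifies the right route, but it would need the linearized induction set up correctly and the distributional machinery supplied to stand on its own.
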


Combining this theorem with our development in Section \ref{sec:two-notions}, we can prove the following.

\begin{proposition}\label{prop:criterion-complex}
	Let $(G,H)$ be a complex symmetric pair with $G$ simply connected. If all its descendants (including itself) are regular, then $(G,H)$ is a Gelfand pair.
\end{proposition}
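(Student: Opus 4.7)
The plan is to assemble three ingredients already available in the excerpt: the Aizenbud-Gourevitch criterion (Theorem \ref{theo:aizenbud-gourevitch}), the distributional criterion (Theorem \ref{theo:GK-ags}), and the equivalence of Gelfand notions proved in Theorem \ref{theo:eqGP1-GP2}. The logical chain is: regularity of the pair together with all its descendants yields a GK-pair, which in turn yields a Gelfand pair \`a la Gross, which finally upgrades to a Gelfand pair under the simply-connected hypothesis on $G$.

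First, I would verify the hypotheses of Theorem \ref{theo:aizenbud-gourevitch}. Regularity of $(G,H,\theta)$ and of all its descendants is part of the assumption, so only stability (Definition \ref{def:stable}) remains. This is the observation already flagged in the introduction: every complex symmetric pair is stable, because over an algebraically closed field one can show that $g$ and $\theta(g)^{-1}$ lie in the same closed $H\times H$-orbit. Theorem \ref{theo:aizenbud-gourevitch} then gives that $(G,H,\theta)$ is a GK-pair, and Theorem \ref{theo:GK-ags}, applied with $\tau=\sigma$, concludes that $(G,H)$ is a Gelfand pair \`a la Gross.

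Next, I would upgrade this to a Gelfand pair by invoking Theorem \ref{theo:eqGP1-GP2}, whose hypotheses are $G$ complex connected reductive simply connected, and $H$ complex connected reductive. The reductivity of $H$ is automatic, since it is the fixed-point subgroup of an involution of a reductive group. Connectedness of $H$ is the key input coming from the simply-connected assumption: by Steinberg's classical theorem, the fixed-point subgroup of an algebraic automorphism of a simply connected semisimple complex algebraic group is connected. This is precisely the first observation highlighted in the introduction, and is what makes the simply-connected hypothesis indispensable here.

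The main obstacle is not any novel computation inside this proposition, but the careful bookkeeping of the hypotheses: one must check that stability holds in the complex setting and that the connectedness of $G^\theta$ follows from $G$ being simply connected, so that the cited theorems chain together without any extra assumption. All the heavy analytic and geometric work (distributional descent behind Theorem \ref{theo:aizenbud-gourevitch} and the construction of a Chevalley-type antiinvolution behind Theorem \ref{theo:eqGP1-GP2}) has already been established, so the proof is essentially a synthesis step.
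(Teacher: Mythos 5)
Your proof is correct and follows essentially the same route as the paper: stability of all complex symmetric pairs plus the regularity hypotheses feed into the Aizenbud--Gourevitch criterion to get a GK-pair (hence Gelfand \`a la Gross), and connectedness of $G^\theta$ for $G$ simply connected (the paper cites Borel for this fact) allows Theorem \ref{theo:eqGP1-GP2} to upgrade this to the Gelfand property. Your write-up is in fact slightly more careful than the paper's, which contains a typo citing Theorem \ref{theo:eqGP1-GP2} where the Aizenbud--Gourevitch criterion is meant.
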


\begin{proof}
	First, we note that all complex symmetric pairs satisfy the stability condition \cite[Sec. 7.1]{ag-duke}, so by Theorem \ref{theo:eqGP1-GP2}, the pair $(G,H)$  is a Gelfand pair \'a la Gross. Since $G$ is simply connected, the subgroup of fixed points $G^\theta$ is connected (this is a fact that goes back to Borel \cite[Thm. 3.4, fn. 4]{borel-61}). Hence, we can apply Theorem \ref{theo:eqGP1-GP2} to deduce that  $(G,H)$ is a Gelfand pair.
\end{proof}

This criterion, together with a case by case proof of the equivalence of notions, was first used in \cite{ag-duke} to show that, for any local field $F$, the pairs $(\GL_{n+k}(F), \GL_n(F) \times \GL_k(F))$ and $(\GL_n(E), \GL_n(F))$, for $E$ a quadratic extension of $F$, are Gelfand pairs. Then, the same was done in  \cite{ag-transactions} to prove that the complex pairs $(\GL_n,\OO_n)$ and  $(\OO_{n+m},\OO_n \times \OO_m)$ are Gelfand pairs. 

In this work we shall introduce and study pleasantness as an extra criterion to prove regularity, describe the descendants in full generality, and apply these to prove the Gelfand property for many exceptional pairs. We shall see first that studying pairs $(G,H)$ with $G$ simply connected is not a restrictive hypothesis, but instead puts us in the most general situation (Lemma \ref{lem:quot-sym-pair}).

\subsection{Coverings, quotients and products of pairs}
\label{sec:covering-quotient}

We have defined the Gelfand property and regularity for reductive groups. We make here some considerations which will allow us to deduce information from the study of simple, and often simply connected, groups.

\begin{lemma}
	Let $(G,H)$, $(K,L)$ be Gelfand pairs and $F$ a normal finite subgroup of $G$:
\begin{itemize}
	\item the product $(G\times K,H\times L)$ is a Gelfand pair.
	\item the quotient $(G/F,HF/F)$ is a Gelfand pair. 
\end{itemize}

\end{lemma}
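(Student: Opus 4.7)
The plan is to reduce each statement to a direct application of the Gelfand property of the factor pair(s), using only formal representation-theoretic manipulations together with one standard structural fact.

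For the product pair $(G\times K, H\times L)$, I would first invoke the tensor-product theorem for irreducible Casselman--Wallach representations: any such representation $E$ of $G\times K$ is isomorphic to a completed tensor product $E_1\,\hat\otimes\, E_2$, where $E_i$ is an irreducible Casselman--Wallach representation of the corresponding factor. (This is the smooth moderate-growth analogue of the familiar Hilbert tensor-product decomposition for irreducible unitary representations, and already appears in the Casselman--Wallach setting; it is the only non-formal ingredient in the proof.) Once this decomposition is in place, the separation-of-variables identity
\[
\Hom_{H\times L}\!\bigl(E_1\,\hat\otimes\, E_2,\,\C\bigr) \;\cong\; \Hom_H(E_1,\C)\,\otimes\,\Hom_L(E_2,\C)
\]
gives the bound $\dim \Hom_{H\times L}(E,\C)\le 1\cdot 1 = 1$ from the Gelfand hypotheses on $(G,H)$ and $(K,L)$.

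For the quotient $(G/F, HF/F)$, the approach is purely formal. Pullback along the quotient map $q\colon G\to G/F$ identifies irreducible Casselman--Wallach representations of $G/F$ with irreducible Casselman--Wallach representations $\tilde E$ of $G$ on which the finite central subgroup $F$ acts trivially. Under this identification, $(HF/F)$-invariance of a functional $\ell\colon \tilde E\to\C$ is the same as $HF$-invariance, and since $H\subseteq HF$, one has the inclusion
\[
\Hom_{HF/F}(E,\C) \;=\; \Hom_{HF}(\tilde E,\C) \;\subseteq\; \Hom_H(\tilde E,\C).
\]
The right-hand side has dimension at most $1$ by the Gelfand property of $(G,H)$ applied to $\tilde E$, which concludes the argument.

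The only genuine obstacle is the tensor-product decomposition in the first part; everything else is diagram-chasing. I would cite the relevant Casselman--Wallach tensor-product result (it can be found in the references already used in the paper, e.g.\ \cite{sun-zhu}, \cite{ags-compositio}) rather than reproving it. A short remark would also note that finiteness of $F$ ensures $HF$ is closed in $G$ and $HF/F$ is closed in $G/F$, so that the relevant $\Hom$ spaces are taken in the appropriate categories of continuous $H$-equivariant maps.
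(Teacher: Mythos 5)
Your proposal is correct and follows essentially the same route as the paper: the completed tensor-product decomposition plus separation of variables for the product pair, and pullback along the quotient map for the quotient pair (where your inclusion $\Hom_{HF}(\tilde E,\C)\subseteq\Hom_H(\tilde E,\C)$ is in fact an equality, since $F$ acts trivially on $\tilde E$, which is what the paper records). The only cosmetic slip is that you call $F$ central while the lemma only assumes it normal and finite; nothing in the argument uses centrality.
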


\begin{proof}
	An irreducible Casselman-Wallach representation $V$ of $G\times K$ is isomorphic to the completed tensor product $V_1\hat{\otimes} V_2$ where $V_1$, $V_2$ are irreducible Casselman-Wallach representations of, respectively, $G$ and $K$. We then have
	$$\Hom_{H\times L}(V,\C)\cong \Hom_H(V_1,\C)\hat{\otimes} \Hom_L(V_2,\C).$$
	
	For the second part, an irreducible Casselman-Wallach representation $V$ of $G/F$ can be regarded as an irreducible Casselman-Wallach (here we are using that $F$ is finite) representation $V_G$ of $G$, where $F$ acts trivially. Then, 
	$$\Hom_{HF/F}(V,\C) \cong \Hom_{H}(V_G,\C).$$
\end{proof}

Note that if $(G,H)$ is a symmetric pair and $F$ a normal finite subgroup, the quotient $(G/F,HF/F)$ is not necessarily a symmetric pair. Instead, we have the following definition.

\begin{definition}\label{def:quotient-sym-pair}
	A quotient symmetric pair of $(G,H,\theta)$ by a $\theta$-invariant finite normal subgroup $F\subset Z(G)$ is the symmetric pair $(G',H',\theta')$ where $G'=G/F$, the involution $\theta'$ is the one induced on $G'$ by the Lie algebra involution $d\theta:\fg\to \fg$ and $H'$ is the fixed-point subgroup $(G')^{\theta'}$. 
\end{definition}

Note that it is possible to lift $d\theta$ to $G'$ because it already lifts to $G$ and we know $\theta(F)=F$. The fixed-point subgroup $H'$ has the same Lie algebra as $H$, since the two pairs have the same underlying tuple $(\fg,\fh,d\theta)$. Moreover, as $\theta(hF)=\theta(h)F=hF$, we have that 
$$HF/F \subset H'.$$

\begin{example}\label{ex:SO-PSO}
	Consider the symmetric pair $(G,H)=(\SO(2n),\SSS(\OO(n)\times \OO(n)), \theta)$, where $\theta$ is conjugation by the $(n,n)$-block matrix
	$\begin{psmallmatrix}1&0\\0&-1\end{psmallmatrix}$. Set $$F=Z(\SO(2n))=\{1, \begin{psmallmatrix}-1&0\\0&-1\end{psmallmatrix} \}.$$ Consider the group $\PSO(2n) = \SO(2n) / Z(\SO(2n)).$ 	We have that $\PSO(2n)^\theta$ is 
	$$ \frac{HZ}{Z}\ltimes [\begin{psmallmatrix}0&-1\\1&0\end{psmallmatrix} ].$$
\end{example}

We are interested in the following lemma.

\begin{lemma}\label{lem:quot-sym-pair}
With the notation of Definition \ref{def:quotient-sym-pair}, if $(G,H,\theta)$ is a Gelfand pair, then $(G',H',\theta')$ is a Gelfand pair.	
\end{lemma}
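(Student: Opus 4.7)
The plan is to relate the fixed-point subgroup $H' = (G')^{\theta'}$ to a subgroup of $G$ sitting between $H$ and the full preimage of $H'$ under the projection $q: G \to G' = G/F$, and then reduce a multiplicity statement on $G'$ to one on $G$ by pullback.

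First I would identify $H'$ explicitly. Setting $\tilde H := \{g \in G \st g^{-1}\theta(g) \in F\}$, I claim $q^{-1}(H') = \tilde H$. Indeed, for $g \in G$, the class $gF$ is fixed by $\theta'$ iff $\theta(g)F = gF$ iff $g^{-1}\theta(g) \in F$. Since $H \subseteq \tilde H$ trivially, we obtain the chain of subgroups $H \subseteq \tilde H$ in $G$, with $\tilde H/F = H'$.

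Next I would pull back representations. Let $V$ be an irreducible Casselman-Wallach representation of $G'$ and denote by $V_G$ the representation of $G$ obtained by composing with $q$. Because $F$ is finite central and acts trivially, $V_G$ is again Fréchet, smooth, of moderate growth, admissible, and irreducible (any closed $G$-invariant subspace is automatically $F$-invariant, hence descends to a closed $G'$-invariant subspace). As vector spaces $V = V_G$, and for $\bar h \in H'$ the action on $V$ coincides with the action of any lift $h \in \tilde H$ on $V_G$. Consequently
\begin{equation*}
\Hom_{H'}(V,\C) \;=\; \Hom_{\tilde H}(V_G,\C).
\end{equation*}

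Finally I would use the inclusion $H \subseteq \tilde H$: any $\tilde H$-invariant functional on $V_G$ is in particular $H$-invariant, so
\begin{equation*}
\dim \Hom_{\tilde H}(V_G,\C) \;\leq\; \dim \Hom_H(V_G,\C) \;\leq\; 1,
\end{equation*}
the last inequality being the Gelfand property of $(G,H)$. This yields $\dim \Hom_{H'}(V,\C) \leq 1$, proving $(G',H',\theta')$ is a Gelfand pair.

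There is no real obstacle here; the only subtle point is that $H'$ may strictly contain $HF/F$, as illustrated in Example \ref{ex:SO-PSO}, so one must not confuse $\Hom_{HF/F}$ with $\Hom_{H'}$. The argument works precisely because passing to a \emph{larger} invariance group only \emph{decreases} the dimension of the Hom space, which is exactly what is needed.
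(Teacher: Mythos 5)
Your proof is correct and follows essentially the same route as the paper: both arguments pull back the representation to $G$ along the quotient map and then exploit the fact that the relevant invariance group ($\tilde H$ upstairs, equivalently $H'\supseteq HF/F$ downstairs) contains $H$, so that the Hom space can only shrink and the Gelfand property of $(G,H)$ applies. Your explicit identification $q^{-1}(H')=\{g\in G \st g^{-1}\theta(g)\in F\}$ is a slightly more detailed bookkeeping of the same inclusion $HF/F\subset H'$ that the paper invokes.
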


\begin{proof}
	It follows from the inclusion $HF/F \subset H'$ and
	$$\dim \Hom_{H'}(V,\C)  \leq  \dim \Hom_{HF/F}(V,\C) = \dim \Hom_{H}(V_G,\C).$$
\end{proof}

In general, a reductive group $G$ can be written as
$$ G = Z(G)_o \times_F G^{ss}$$
for some finite group $F$, where $Z(G)_o$ is the identity component of the centre of $G$, and the semisimple part $G^{ss}$ itself is the quotient of a product of simple groups $\{G_i\}$ by some finite group. An involution on $G$ gives involutions on $Z(G)_o$ and on each simple factor $G_i$ (these involutions are compatible in some sense given by the quotients by finite groups). A sufficient condition for $(G,H,\theta)$  to be a Gelfand pair is that the symmetric pairs for $Z(G)_o$ and each simple factor $G_i$ are Gelfand pairs. In particular, we have the following.

\begin{proposition}
 The Gelfand property is satisfied for all complex symmetric pairs $(G,H,\theta)$ with $G$ semisimple and connected if and only if it is satisfied for all $G$ simple, connected, and simply connected.
\end{proposition}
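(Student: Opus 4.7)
The forward direction is immediate, since a simple, connected, simply connected group is in particular semisimple and connected. For the converse, let $(G,H,\theta)$ be a complex symmetric pair with $G$ semisimple and connected. The plan is to lift the pair to the simply connected cover, decompose there according to the action of the lifted involution on simple factors, and then descend. Let $\pi\colon \tilde G\to G$ be the simply connected algebraic cover, with finite central kernel $F$, and write $\tilde G=\prod_{i\in I}G_i$ with each $G_i$ simple, connected, and simply connected. The differential $d\theta$ integrates uniquely to an involution $\tilde\theta$ of $\tilde G$ covering $\theta$; in particular $\pi\circ\tilde\theta=\theta\circ\pi$, so $\tilde\theta(F)=F$.

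The involution $\tilde\theta$ permutes the simple factors as an involution, hence the orbits on $I$ have size one or two. Grouping factors by orbit gives a decomposition $\tilde G=\prod_k \tilde G_k$ into $\tilde\theta$-invariant blocks, with $\tilde G^{\tilde\theta}=\prod_k \tilde G_k^{\tilde\theta}$. A singleton block is a symmetric pair on a simple simply connected group, hence a Gelfand pair by hypothesis. A two-element block consists of two simple simply connected factors swapped by $\tilde\theta$; the fixed-point subgroup is the graph of the swapping isomorphism, and after identifying the two factors the block becomes isomorphic to a diagonal pair $(K\times K,\Delta K)$ for $K$ simple simply connected.

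Any diagonal pair $(K\times K,\Delta K)$ is a Gelfand pair by Schur's lemma: for irreducible Casselman-Wallach representations $V_1,V_2$ of $K$ one has $\Hom_{\Delta K}(V_1\hat\otimes V_2,\C)\cong \Hom_K(V_1,V_2')$, of dimension at most one. Hence each block $(\tilde G_k,\tilde G_k^{\tilde\theta})$ is a Gelfand pair; the product statement of the first lemma of Section \ref{sec:covering-quotient} then gives that $(\tilde G,\tilde G^{\tilde\theta},\tilde\theta)$ is a Gelfand pair; and finally, Lemma \ref{lem:quot-sym-pair} applied to the $\tilde\theta$-invariant finite central subgroup $F\subset \tilde G$ yields that $(G,H,\theta)$ is a Gelfand pair. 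The main subtlety is the two-element orbit case, which is not directly covered by the hypothesis and is resolved by the Schur computation above.
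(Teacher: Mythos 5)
Your proof is correct and follows essentially the same route as the paper: pass to the simply connected cover, decompose into simple factors, and descend via the product and quotient lemmas of Section \ref{sec:covering-quotient}. You are in fact more careful than the paper's own sketch, which asserts that the involution induces involutions on each simple factor without addressing factors swapped by $\tilde\theta$; your reduction of a swapped block to the diagonal pair $(K\times K,\Delta K)$ together with the Schur's lemma computation correctly fills that gap.
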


We will therefore aim to prove that symmetric pairs for simply connected simple groups satisfy the Gelfand property. Note that for inner symmetric pairs, the involution on $Z(G)_o$ is always trivial.

\begin{example}
	If we want to consider the pair  $(\GL_{p+q},\GL_p\times \GL_q)$ from our viewpoint, we would consider first the pair $(\SL_{p+q},\SSS(\GL_p\times \GL_q))$, which will be proved to be a Gelfand pair. Since $\GL_{p+q}=\SL_{p+q} \times_{\Z_{p+q}} \C^*$ and the pair is inner, the well-known Gelfand property for $(\GL_{p+q},\GL_p\times \GL_q)$ would follow from the observation above and Lemma \ref{lem:quot-sym-pair}.
\end{example}

\section{Pleasant pairs and regularity}
\label{sec:pleasant}

We introduce and study the notion of pleasant pair as a group-theoretic way of proving the regularity of a symmetric pair. 

\subsection{Definition of a pleasant pair}

Condition \ref{eq:regularity} for regularity (Definition \ref{def:regular}) is trivially satisfied when $\Ad g \in \Ad H$ for all admissible $g$. Since the admissibility condition (Defintion \ref{def:admissible}) is fairly complicated, we define the set
$$ \cA_\theta:=\{ g \in G \st \theta(g)\in g Z(G) \} = \{ g \in G \st \sigma(g) g \in Z(G) \}, $$
which contains all admissible elements and is easily described. 

\begin{definition}\label{def:pleasant}
	We say that a pair $(G,H,\theta)$ is \textbf{pleasant} when $$ \Ad \cA_\theta \subseteq \Ad H,$$
or, equivalently written in terms of the involution $\theta$, when,
$$ (\Ad G)^\theta \subseteq  \Ad G^\theta.$$
\end{definition}

An immediate consequences of this definition is the following.

\begin{lemma}
	Pleasant pairs are regular.
\end{lemma}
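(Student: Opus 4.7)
The plan is to unwind the definitions and observe that pleasantness forces the regularity condition to hold trivially, without any distributional analysis.

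First, I would note that an admissible element $g \in G$ satisfies, by the first bullet of the definition of admissibility, $g^{-1}\theta(g) \in Z(G)$, i.e. $\theta(g)\in gZ(G)$. Hence the set of admissible elements is contained in $\cA_\theta$. So the pleasantness hypothesis $\Ad\cA_\theta \subseteq \Ad H$ implies $\Ad g \in \Ad H$ for every admissible $g$; in other words, there exists $h \in H$ such that $\Ad g = \Ad h$ as automorphisms of $G$ (and hence of $\fg$, restricting to $\fp$).

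Next, I would observe that the inclusion $\cS^*(\fp)^H \subseteq \cS^*(\fp)^{\Ad g}$ in the definition of regularity is then automatic: any $H$-invariant Schwartz distribution on $\fp$ is in particular invariant under $\Ad h$, and since $\Ad g$ acts on $\fp$ by the same transformation as $\Ad h$, it is also $\Ad g$-invariant. Thus the implication in Definition \ref{def:regular} is vacuously satisfied for every admissible $g$, without using the hypothesis on $\cS^*(\cR\fp)$ at all. This gives regularity of $(G,H,\theta)$.

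There is no main obstacle; the argument is purely formal once one checks the inclusion of admissible elements in $\cA_\theta$ and recognizes that $\Ad g$ and $\Ad h$ induce the same action on $\fp$. The content of the lemma is precisely that pleasantness is a convenient sufficient condition that bypasses the analytic half of regularity, which is why it will be useful in the subsequent sections.
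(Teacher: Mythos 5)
Your argument is correct and is exactly the paper's (the paper only sketches it in the sentence preceding the definition of $\cA_\theta$): admissible elements lie in $\cA_\theta$, pleasantness then gives $\Ad g=\Ad h$ for some $h\in H$, and so the conclusion $\cS^*(\fp)^H\subseteq\cS^*(\fp)^{\Ad g}$ of the regularity implication holds unconditionally. No gaps.
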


\begin{example}\label{ex:regular-extreme}
		For $G$ a simple group, the symmetric pair $(G\times G,\Delta G)$, where $\Delta G$ denotes the diagonal, is always regular. Indeed, the involution is given by $\theta(x,y)=(y,x)$, so the elements in $\cA_\theta$ are just $(x,xz)$ with $z$ in the centre, and $\Ad (x,xz)=\Ad (x,x)\in \Ad \Delta G$. As an additional trivial remark, note that the trivial pair $(G,G)$ is also regular.
\end{example}

\begin{example}\label{ex:SL2-C}
	Consider the pair $(\SL_2,\C^*)$, given by the involution $\theta=\Ad {I_{1,1}}$, where $$I_{1,1}=\begin{psmallmatrix}1&0\\0&-1\end{psmallmatrix}.$$ The subgroup $\C^*$ corresponds to the diagonal matrices $\begin{psmallmatrix}a&0\\0&a^{-1}\end{psmallmatrix}$. We have $Z(\SL_2)=\la -\Id \ra \cong \Z_2$ and $$J=\begin{psmallmatrix}0&-1\\1&0\end{psmallmatrix}\in \cA_\theta,$$
	as $\theta(J)=-J$. But $\Ad J\not \in \Ad H$, as $J\not\in  Z(\SL_2)\C^*$, so the pair is not pleasant. Note that the condition of pleasantness can equivalently be stated as $\cA_\theta\subseteq H Z(G)$.
\end{example}

\begin{remark}
	It is possible to know, using techniques as in \cite{carmeli2015}, which elements inside $\cA_\theta$ are actually admissible. However, when looking at all possible pairs, any element in $\cA_\theta$ will be admissible for $\theta$ or for some other involution $\theta'$ of the same group. Thus, when dealing with all symmetric pairs, using $\cA_\theta$ instead of admissible elements makes no  difference.
\end{remark}

\subsection{Criteria for pleasantness}
\label{sec:criteria}
This section contains the criteria that will be used in Sections \ref{sec:classical-pleasant} and \ref{sec:exceptional-pleasant} to give a complete classification of pleasant pairs. We refer to these two sections for examples of their use.

The first one is the most obvious one.
\begin{lemma}\label{lemma:centerless}
	Any symmetric pair $(G,H, \theta)$ with $Z(G)=\{1\}$  is pleasant.
\end{lemma}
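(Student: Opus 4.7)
The plan is to unwind the definitions and observe that the hypothesis $Z(G)=\{1\}$ collapses the set $\cA_\theta$ down to $H$, at which point pleasantness becomes tautological.

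First I would rewrite the defining condition of $\cA_\theta$ in this degenerate case. By definition,
$$\cA_\theta = \{g \in G \st \theta(g) \in g Z(G)\}.$$
When $Z(G) = \{1\}$, the coset $gZ(G)$ reduces to the singleton $\{g\}$, so the condition $\theta(g) \in gZ(G)$ is simply $\theta(g) = g$. Hence
$$\cA_\theta = G^\theta = H.$$

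Next I would conclude: applying $\Ad$ to both sides gives $\Ad \cA_\theta = \Ad H$, which trivially satisfies the inclusion $\Ad \cA_\theta \subseteq \Ad H$ required by Definition \ref{def:pleasant}. This verifies pleasantness directly, without needing to invoke the equivalent reformulation $(\Ad G)^\theta \subseteq \Ad G^\theta$.

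There is no real obstacle here; the lemma is a bookkeeping consequence of the definition, and it presumably earns its keep in the sequel by giving a cheap one-line proof of pleasantness for adjoint groups (such as $\mathrm{PSO}$ or the centerless exceptional types like $\E_8$, $\F_4$, $\G_2$), where the ambient classification work in Sections \ref{sec:classical-pleasant} and \ref{sec:exceptional-pleasant} would otherwise have to verify the inclusion case by case.
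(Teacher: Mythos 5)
Your proof is correct and matches the paper's reasoning: the paper gives no separate argument but notes the lemma is the case $|Z(G)|=1$ of Proposition~\ref{prop:centerodd}, whose proof likewise reduces to showing $\cA_\theta=H$ because no nontrivial central $\lambda$ with $\theta(g)=\lambda g$ can exist. Your direct unwinding of the definition is the same observation, stated without the detour through the more general criterion.
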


This lemma is actually a particular case of the following criterion. 
\begin{proposition}\label{prop:centerodd}
	Let $(G,H,\theta)$ be a symmetric pair. If $\theta(\lambda)=\lambda$ for $\lambda\in Z(G)$ (which is always the case for inner involutions) and $|Z(G)|$ is odd, then the pair is pleasant.
\end{proposition}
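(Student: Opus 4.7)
The plan is to prove the stronger statement $\cA_\theta \subseteq H$, from which $\Ad \cA_\theta \subseteq \Ad H$ follows trivially. Fix $g \in \cA_\theta$ and write $\theta(g) = g\lambda$ with $\lambda \in Z(G)$. Applying $\theta$ once more and using the hypothesis $\theta(\lambda) = \lambda$ together with $\theta^2 = \id$ gives
\[
g = \theta^2(g) = \theta(g\lambda) = \theta(g)\,\theta(\lambda) = g\lambda^2,
\]
so $\lambda^2 = 1$ in $Z(G)$.

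The key point is then purely group-theoretic: since $|Z(G)|$ is odd, Lagrange's theorem forces the order of $\lambda$ to be an odd divisor of $2$, hence $\lambda = 1$. Consequently $\theta(g) = g$, i.e.\ $g \in G^\theta = H$, as desired.

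For the parenthetical remark, if $\theta = \Ad h$ is inner then $\theta(\lambda) = h\lambda h^{-1} = \lambda$ for every $\lambda \in Z(G)$, so the hypothesis $\theta(\lambda) = \lambda$ is automatic in that case. There is no real obstacle here: the entire argument is a two-line manipulation once one exploits that the hypothesis on $\theta|_{Z(G)}$ plus $\theta^2 = \id$ confines the ``centre ambiguity'' in $\cA_\theta$ to $Z(G)[2]$, which is trivial under the odd-order assumption.
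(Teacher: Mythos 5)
Your proof is correct and follows essentially the same route as the paper: apply $\theta$ to $\theta(g)=\lambda g$, use $\theta^2=\id$ and $\theta|_{Z(G)}=\id$ to get $\lambda^2=1$, and conclude $\lambda=1$ from the odd order of $Z(G)$ (the paper phrases this as a contradiction for $g\in\cA_\theta\setminus H$, you argue directly, but it is the same computation). The verification of the parenthetical remark about inner involutions is a harmless addition.
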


\begin{proof}
	Take an element $g\in \cA_\theta \setminus H$. We have that
	$$ \theta(g) = \lambda g $$
	for $\lambda\in Z(G)\setminus \{1\}$. By applying $\theta$ we get
	$$ g = \lambda^2 g,$$
	which is not possible as $|Z(G)|$ is odd. Hence $\cA_\theta= H$, and the pair is pleasant. 
\end{proof}

A similar argument gives a criterion when $\theta$ does not fix the centre element-wise.

\begin{proposition}\label{prop:centeroddeven}
	Let $(G,H,\theta)$ be a symmetric pair
	\begin{enumerate}
		\item[a)]  If  $|Z(G)|$ is odd and $\theta(\lambda)=\lambda^{-1}$ for any $\lambda\in Z(G)$, the pair is pleasant.
		\item[b)]  If there exists $g\in G$ such that $\theta(g)=\mu g$ for $\mu\in Z(G)\setminus \{1\}$ with $\theta(\mu)=\mu^{-1}$ and $\mu$ has no square root in $Z(G)$, then the pair is not pleasant.
	\end{enumerate}
\end{proposition}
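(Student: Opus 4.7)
The plan is to handle both parts by reformulating pleasantness as $\cA_\theta \subseteq H \cdot Z(G)$ and analyzing, for $g \in \cA_\theta$ with $\theta(g) = \lambda g$, the question of when one can write $g = hz$ with $h \in H$ and $z \in Z(G)$; a short computation shows this is equivalent to the existence of some $z \in Z(G)$ satisfying $\theta(z)\,z^{-1} = \lambda$.

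For part a), I would start with $g \in \cA_\theta$ and write $\theta(g) = \lambda g$ with $\lambda \in Z(G)$. Since $|Z(G)|$ is odd, squaring is a bijection on $Z(G)$, so there is a unique $\nu \in Z(G)$ with $\nu^{2} = \lambda$. Applying $\theta$ to this relation and using the hypothesis $\theta(\lambda) = \lambda^{-1}$ together with injectivity of squaring forces $\theta(\nu) = \nu^{-1}$. Setting $h := \nu g$ one computes
\[
\theta(h) = \theta(\nu)\,\theta(g) = \nu^{-1}\cdot \lambda g = \nu g = h,
\]
so $h \in H$ and $g = \nu^{-1} h \in Z(G)\cdot H$. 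Thus $\cA_\theta \subseteq Z(G) \cdot H$, i.e. the pair is pleasant.

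For part b), the given $g$ clearly lies in $\cA_\theta$. Arguing by contradiction, suppose $g \in H \cdot Z(G)$, say $g = hz$ with $h \in H$, $z \in Z(G)$. Applying $\theta$, using $\theta(h) = h$, and matching with $\theta(g) = \mu g$ gives $\theta(z) = \mu z$, hence $\theta(z)\,z^{-1} = \mu$. The aim is then to derive from this identity, together with $\theta(\mu) = \mu^{-1}$, a square root of $\mu$ in $Z(G)$ contradicting the hypothesis; in the typical situation where $\theta$ restricts to inversion on $\langle \mu\rangle \subseteq Z(G)$ this is immediate, since the equation reads $z^{-2} = \mu$ and $z^{-1}$ is the desired square root.

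The main obstacle is to make the last step of part b) independent of the precise action of $\theta|_{Z(G)}$: the image of the homomorphism $z \mapsto \theta(z)\,z^{-1}$ on a general finite abelian group with involution need not consist of squares, so the argument does not work verbatim for every $(Z(G),\theta|_{Z(G)})$. A clean way around this is to pass to the $\theta$-stable cyclic subgroup generated by $\mu$, where the hypothesis $\theta(\mu) = \mu^{-1}$ gives $\theta$ acting as inversion and the norm $z\theta(z) = \mu z^{2}$ becomes $\theta$-fixed; for the centres arising in simply connected simple complex groups the remaining non-cyclic case ($Z(G)$ of type $D_{2k}$) can be closed by the same kind of structural computation, completing the contradiction and establishing non-pleasantness.
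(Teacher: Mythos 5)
Part a) of your proposal is correct and is essentially the paper's own argument: oddness of $|Z(G)|$ gives $\nu$ with $\nu^2=\lambda$, and $\nu g$ is checked to be $\theta$-fixed (your intermediate verification that $\theta(\nu)=\nu^{-1}$ is superfluous, since the hypothesis already says $\theta$ inverts every central element). Your reformulation of pleasantness as $\cA_\theta\subseteq HZ(G)$ and the reduction to solvability of $\theta(z)z^{-1}=\lambda$ in $Z(G)$ also match Example \ref{ex:SL2-C} and the structure of the paper's proof.

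For part b) you have correctly isolated the delicate point, but you have not resolved it, so the proposal is incomplete there. From $g=hz$ one gets $\theta(z)z^{-1}=\mu$, and to contradict ``$\mu$ has no square root in $Z(G)$'' one needs $\theta(z)=z^{-1}$ for this auxiliary $z$, which is not among the stated hypotheses ($\theta(\mu)=\mu^{-1}$ constrains only $\mu$). The paper's own proof makes exactly this jump, writing $\theta(\nu h)=\nu^{-1}h$ without comment, so your observation is a genuine catch rather than a misreading. Your proposed repair, however, does not close the gap: passing to $\langle\mu\rangle$ is irrelevant because $z$ ranges over all of $Z(G)$, not over $\langle\mu\rangle$, and the non-cyclic case you defer is precisely where the trouble lies. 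Concretely, for $Z(G)\cong\Z_2\times\Z_2$ with $\theta$ fixing $-1$ and sending the other generator $\omega$ to $-\omega$ (the configuration of $Z(\Spin_{4q})$ with $s$ odd in Section \ref{sec:spin-pairs}), the image of $z\mapsto\theta(z)z^{-1}$ is $\{1,-1\}$, strictly larger than the set of squares $\{1\}$; taking $\mu=-1$, which satisfies every hypothesis of b), one finds $\theta(\omega g)=\omega g$, so $\omega g\in H$ and $\Ad g=\Ad(\omega g)\in\Ad H$, and the given $g$ is not a witness of non-pleasantness. So b) as stated really requires an additional hypothesis --- for instance that $\theta$ inverts all of $Z(G)$, or directly that $\mu$ is not in the image of $z\mapsto\theta(z)z^{-1}$ --- and your sketch does not supply one.
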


\begin{proof}
	For part a), let  $g\in \cA_\theta \setminus H$ be such that $\theta(g) = \lambda g$ with $\lambda\in Z(G)\setminus \{1\}$. Consider an element $\sqrt{\lambda}\in Z(G)$ squaring to $\lambda$. The element $h=\sqrt{\lambda}g$  satisfies $$\theta(h)=\theta(\sqrt{\lambda}g)=(\sqrt{\lambda})^{-1}\lambda g=\sqrt{\lambda}g=h,$$
	that is, belongs to $H$, and $\Ad g =\Ad h $, as they differ by a central element.
	
	For part b), the element $g$ from the statement belongs to $\cA_\theta \setminus H$. If we had $\Ad g=\Ad h$ for $h\in H$, we would have $g=\nu h$ for $\nu\in Z(G)$, and also
	$$ \mu \nu h = \mu g = \theta(g) = \theta(\nu h) = \nu^{-1} h,$$
	This would mean $\nu^{2}=\mu^{-1}$, that is, $\mu^{-1}$, and also $\mu$,  must have a square root in $Z(G)$. Hence, if $\mu$ has not such a square root, the pair is not pleasant. 
\end{proof}

We give another criterion for inner involutions, motivated by an example.

\begin{example}\label{ex:SLpq}
	Consider the pair $(\SL_{p+q},\SSS(\GL_{p}+\GL_q))$.   We write elements as block matrices with the subdivision $(p,q)$ for rows and columns. The involution $\theta$ is given by $\Ad I_{p,q}$ for
	$$ I_{p,q}=\begin{pmatrix}[cc]
	I_p & 0     \\ 
	0 & -I_q
	\end{pmatrix}, \qquad  \theta : \begin{pmatrix}[cc]
	A & B    \\ 
	C& D
	\end{pmatrix} \mapsto \begin{pmatrix}[cc]
	A &  -B \\ 
	-C& D
	\end{pmatrix}. $$
	The matrices in $\cA_\theta$ should satisfy $\theta(g)=\lambda g$ for $\lambda$ a $(p+q)$-th root of unity.
	
	If $A\neq 0$ or $D\neq 0$ then $\lambda$ must be $1$ and $g\in H$. Otherwise, $\lambda=-1$ and $g$ would have the form $$\begin{pmatrix}[cc]
	0 & B     \\
	C & 0 
	\end{pmatrix}.$$
	When $p\neq q$, this matrix does not belong to $\SL_{p+q}$, as it is not invertible, so $\lambda=-1$ is not possible and, hence, the pair is  pleasant. 
\end{example}

\begin{proposition}\label{prop:innerauto}
	Let $(G,H,\theta)$ be a symmetric pair, such that $G\subseteq \GL(V)$ with $Z(G)\subseteq \C \cdot \Id$, and $\theta$ is an inner involution given by $\Ad k$ for some $k\in \GL(V)$ of order two. Let $V_+$ and $V_-$ be the $\pm 1$-eigenspaces of $k$ on $V$. 
	\begin{enumerate}
		\item[a)] 	If $\dim V_+\neq \dim V_-$, the pair is pleasant.
		\item[b)]   If $\dim V_+= \dim V_-$, $-\Id\in Z(G)$, and there is $g\in G$ interchanging $V_+$ and $V_-$, the pair is not pleasant.
	\end{enumerate}
\end{proposition}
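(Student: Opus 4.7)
The plan is to translate the defining condition of $\cA_\theta$ into a commutation relation with $k$ and then exploit the eigenspace decomposition $V = V_+ \oplus V_-$. Since $\theta = \Ad k$ and $Z(G) \subseteq \C \cdot \Id$, an element $g \in G$ lies in $\cA_\theta$ precisely when $kg = c\, gk$ for some scalar $c$ with $c \cdot \Id \in Z(G)$. For a $k$-eigenvector $v$ with eigenvalue $\varepsilon \in \{\pm 1\}$, this forces $k(gv) = \varepsilon c\, gv$, so $gv$ lies in the $\varepsilon c$-eigenspace of $k$. Since the only eigenvalues of $k$ are $\pm 1$ and both $V_+$ and $V_-$ are nonzero (otherwise $\theta$ is trivial and the pair is automatically pleasant), this forces $c \in \{\pm 1\}$.

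For part (a), I would argue that $c = 1$ means $g$ commutes with $k$, hence $g \in G^\theta = H$, whereas $c = -1$ would require $g$ to restrict to linear isomorphisms $V_+ \to V_-$ and $V_- \to V_+$, which is ruled out by $\dim V_+ \neq \dim V_-$. Thus $\cA_\theta = H$, and a fortiori $\Ad \cA_\theta \subseteq \Ad H$, so the pair is pleasant.

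For part (b), I would take the given $g \in G$ interchanging $V_+$ and $V_-$ and run the eigenspace computation in reverse to obtain $kgk^{-1} = -g$; since $-\Id \in Z(G)$, this shows $g \in \cA_\theta$ with $\theta(g) = -g$. Then, supposing for contradiction that $\Ad g = \Ad h$ for some $h \in H$, one would have $g = zh$ with $z \in Z(G) \subseteq \C \cdot \Id$, and since $k$ commutes with every scalar, $\theta(z) = z$. Applying $\theta$ yields $\theta(g) = z\theta(h) = zh = g$, contradicting $\theta(g) = -g$. Hence $\Ad g \notin \Ad H$, so the pair fails to be pleasant.

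The only delicate point is the verification in part (b) that one genuinely lands outside $\Ad H$; the rest is routine eigenspace bookkeeping. What makes this step go through is that $\theta$ fixes $Z(G)$ elementwise, because every central element is a scalar and therefore commutes with $k$, which neutralises the central ambiguity $g = zh$ and produces the contradiction.
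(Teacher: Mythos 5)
Your proof is correct and follows essentially the same route as the paper: translate $g\in\cA_\theta$ into $kg=\lambda gk$ with $\lambda$ a central scalar, use the eigenspace decomposition to force $\lambda=\pm 1$ and (when $\lambda=-1$) that $g$ swaps $V_+$ and $V_-$, which kills invertibility when the dimensions differ; for (b) the paper phrases the final contradiction as ``$h=\nu^{-1}g$ would interchange $V_\pm$ but elements of $H$ preserve them,'' which is the same observation as your $\theta(g)=-g$ versus $\theta(zh)=zh$. No gaps.
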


\begin{proof}
	Consider $g\in \cA_\theta \setminus H$, there is $\lambda \in Z(G)\setminus \{1\}$ such that $\theta(g)=\lambda g$, that is,
	$$ kg = \lambda gk.$$
	By applying this identity to $v\in V_+$ and $w\in V_-$ we get
	$$ kgv = \lambda gv, \qquad kgw= -\lambda gw$$
	As the only eigenvalues of $k$ are $\pm 1$, we have that $\lambda$ must be $-\Id\in \GL(V)$ and $g$ must interchange $V_+$ and $V_-$. 
	
	For part a), if  $\dim V_+\neq \dim V_-$, the element $g$ would not be invertible as either 
	$$\dim g V_+ \leq \dim V_- < \dim V_+,\qquad  \textrm{ or } \qquad\dim gV_-\leq \dim V_+ < \dim V_-,$$
	so the pair is pleasant.
	
	For part b), when $\dim V_+= \dim V_-$, assume there is $g\in G$ interchanging $V_+$ and $V_-$. As $-\Id\in Z(G)$, it follows that $g\in \cA_\theta$. If we had $\Ad g=\Ad h$ for some $h\in H$, we would have $g=\nu h$  with $\nu\in Z(G)$, so $h$ would interchange $V_+$ and $V_-$. This is a contradiction, as $h\in H$ satisfies $kh=hk$, so it preserves, arguing as above, the subspaces $V_+$ and $V_-$.
\end{proof}

Clearly, if a symmetric pair is pleasant, any of its quotient symmetric pairs (in the sense of Definition \ref{def:quotient-sym-pair}) is pleasant. After recalling the classification of complex symmetric pairs, we will try to prove pleasantness for simply connected pairs, apart from the case of $\SO$ and $\Spin$, where it is preferable to deal first with $\SO$.

\subsection{Classification of complex symmetric pairs}
\label{sec:Satake}

Complex symmetric pairs for simple $G$ are classified by means of Satake diagrams. Take a maximal torus $T\subset G$ containing a maximal $\theta$-split torus $A$, that is, $\theta(x)=x^{-1}$ for $x\in A$. By \cite[\S 1]{vust}, assuming that $\theta$ is not the identity, we have that $A$ is not trivial and $T$ is $\theta$-stable. Consider the root system $\Delta:=\Delta(G,T)$.  Let 
$$\Delta_0:=\{ \al \in \Delta \st \al_{|A}=1 \} = \{ \al \in \Delta \st d\al_{|\fa}=0 \}$$
be the roots that are trivial when restricted to $A$ or, equivalently, $\fa$. Consider the action of $\theta$ on $\Delta$ denoted, in additive notation, by $$\alpha \mapsto -\theta\alpha,$$ which is given by $(-\theta\alpha)(t)=(\alpha(\theta(t)))^{-1}$ for $t\in T$.

\begin{definition}\label{def:theta-basis}
	A $\theta$-basis is a set of simple roots coming from an ordering such that
	$$ \text{for } \al\notin \Delta_0, \; \al>0 \Rightarrow  -\theta\alpha > 0.$$
\end{definition}

We choose one of such orderings (they always exist, as it can be shown by taking a lexicographic ordering) and consider the corresponding $\theta$-basis $\Pi$. By \cite[\S 1 and App.]{satake} or \cite[Prop. 32.9]{bump}, this choice implies that $\theta$ acts as an order $2$ permutation on $\Pi\setminus (\Pi\cap \Delta_0)$, and the Satake diagram is constructed as follows:
	\begin{itemize}
		\item consider the Dynkin diagram with respect to $\Pi$.
		\item colour black the nodes corresponding to $\al\in\Pi\cap \Delta_0$, whereas the rest of simple roots remain white.
		\item connect two simple roots permuted by $\theta$ with a grey bar (an alternative common notation is a double-headed arrow).  
	\end{itemize}


In Table \ref{tab:Satake} we show the Satake diagrams of complex symmetric pairs for $G$ simple, following \cite{araki}, together with the corresponding symmetric pair for $G$ simply connected\footnote{For $(E_8,D_8)$, the fixed subgroup is not $\SO(16)$ but a different quotient of $\Spin(16)$ by $\{\pm 1\}$}. Some degenerate indices may appear as in 	$(A_1,D_1)$, which refers to $(A_1,\C)$.

\begin{table}[htp]
	\begin{center}
				\begin{tabular}[280pt]{cc}
			\hline \hline		
			\parbox[t]{140pt}{\center \dynkin{A}{I}\\ \footnotesize  $(A_{2n},B_n)$, $(A_{2n-1},D_n)$ \\  $(\SL_{2n+1},\SO_{2n+1})$, $(\SL_{2n},\SO_{2n})$} & 	\parbox[t]{140pt}{\center \dynkin{A}{II}\\ \footnotesize $(A_{2n-1},C_n)$ \\ $(\SL_{2n},\
				\Sp_{2n})$}       \\ 
			& \\ \hline
			\parbox[t]{140pt}{ \center \dynkin{A}{IIIa}\\ \footnotesize $(A_{r+s+1},A_r+A_s+\C)$, $r\neq s$ \\ $(\SL_{r+s+2},\
				\SSS(GL_{r+1}\times \GL_{s+1}))$}  &   	\parbox[t]{140pt}{\center \dynkin{A}{IIIb}\\ \footnotesize $(A_{2r+1},A_r+A_r+\C)$ \\ $(\SL_{2r+2},\
				\SSS(GL_{r+1}\times \GL_{r+1}))$}     \\
			& \\ \hline
		\end{tabular}
		\begin{tabular}[280pt]{ccc}
			\parbox[t]{90pt}{\center \dynkin{C}{I}\\ \footnotesize $(C_{n},A_{n-1}+\C)$ \\ $(\Sp_{2n},\
				\GL_n)$} & \parbox[t]{90pt}{\center \dynkin{C}{IIa}\\ \footnotesize $(C_{r+s},C_r+C_s)$, $r\neq s$ \\ $(\Sp_{2(r+s)},\
				\Sp_{2r}\times \Sp_{2s})$} & \parbox[t]{90pt}{\center \dynkin{C}{IIb}\\ \footnotesize $(C_{2r},C_r+C_r)$ \\ $(\Sp_{4r},\
				\Sp_{2r}\times \Sp_{2r})$}  \\
			& \\ \hline
					\end{tabular}
						\begin{tabular}[280pt]{ccc}
					\parbox[t]{138pt}{ \center \footnotesize $(B_{r+s},B_{r}+D_s)$ \\ $(\Spin_{2(r+s)+1},				\Spin_{2r+1}\times_{\Z_2} \Spin_{2s})$} & \parbox[t]{70pt}{\center \dynkin{B}{I}\\ \footnotesize $r\neq s-1,s$}     & \parbox[t]{62pt}{\center \dynkin{B}{II}\\ \footnotesize $r=s-1,s$}  \\
					& \\ \hline
				\end{tabular}
					\begin{tabular}[280pt]{cccc}
				\parbox[t]{142pt}{\center \footnotesize $(D_{r+s},D_{r}+D_s)$ \\ $(\Spin_{2(r+s)},\Spin_{2r}\times_{\Z_2} \Spin_{2s}),$\\ $\cdot$ \\ $(D_{r+s+1},B_{r}+B_s)$\\ $(\Spin_{2(r+s+1)},\Spin_{2r+1}\times_{\Z_2} \Spin_{2s+1})$} & \parbox[t]{62pt}{\center \dynkin{D}{Ia}\\ \bigskip \footnotesize \vspace{1pt} $r\neq s-1,s$ } & \parbox[t]{30pt}{\center \dynkin{D}{Ib}\\ \bigskip \footnotesize \vspace{2pt} $r=s-1$} & \parbox[t]{28pt}{\center \dynkin{D}{Ic}\\ \bigskip \footnotesize \vspace{3pt} $r=s$}    \\
				& \\ \hline
			\end{tabular}
							\begin{tabular}[280pt]{ccc}
			\parbox[t]{120pt}{\center \footnotesize \bigskip $(D_{r},A_{r-1}+\C)$, \\ $(\Spin_{2r}, \GL_{r})$} & \parbox[t]{75pt}{\center \dynkin{D}{IIIa}\\ \footnotesize   $r$ even } & \parbox[t]{75pt}{\center \dynkin{D}{IIIb}\\ \footnotesize $r$ odd}     \\
			& \\ \hline \hline
		\end{tabular}
	\begin{tabular}[280pt]{ccc}
		\parbox[t]{90pt}{\center \dynkin{G}{I}\\ \footnotesize $(G_2,A_1+A_1)$ \\ $(G_2,\SL_1\times_{\Z_2} \SL_1)$} & \parbox[t]{90pt}{\center \dynkin{F}{I}\\ \footnotesize $(F_4,B_4)$ \\ $(F_4,\Spin_9)$} & \parbox[t]{90pt}{\center \dynkin{F}{II}\\ \footnotesize $(F_4,C_3+A_1)$ \\ $(F_4,\Sp_6\times_{\Z_2} \SL_2)$}    \\
		& \\ \hline
	\end{tabular}
\begin{tabular}[280pt]{cccc}
	\parbox[t]{50pt}{\center \dynkin{E}{I}\\ \footnotesize $(E_{6},C_4)$ \\ $(E_6,\Sp_8/\Z_2)$} & \parbox[t]{80pt}{\center \dynkin{E}{II}\\ \footnotesize $(E_6,A_5+A_1)$ \\ $(E_6,\SL_6\times_{\Z_2} \SL_2)$} & \parbox[t]{50pt}{\center \dynkin{E}{IV}\\ \footnotesize $(E_6,F_4)$ \\ $(E_6,F_4)$} & \parbox[t]{80pt}{\center \dynkin{E}{III}\\ \footnotesize $(E_6,D_5+\C)$ \\ $(E_6,\Spin_{10}\times_{\Z_4} \GL_1)$}    \\
	& \\ \hline
\end{tabular}
			\begin{tabular}[280pt]{ccc}
		\parbox[t]{90pt}{\center \dynkin{E}{V}\\ \footnotesize $(E_{7},A_7)$ \\ $(E_7,\SL_8/\Z_2)$} & \parbox[t]{90pt}{\center \dynkin{E}{VI}\\ \footnotesize $(E_7,D_6+A_1)$ \\ $(E_7,\Spin_{12}\times_{\Z_2} \SL_2)$} & \parbox[t]{90pt}{\center \dynkin{E}{VII}\\ \footnotesize $(E_7,E_6+\C)$ \\ $(E_7,E_6\times_{\Z_3} \GL_1)$}   \\
		& \\ \hline
	\end{tabular}
	\begin{tabular}[280pt]{cc}
	\parbox[t]{140pt}{\center \dynkin{E}{VIII}\\ \footnotesize $(E_8,D_8)$\\ $(E_8,\Spin_{16}/\{\pm 1\})$} & 	\parbox[t]{140pt}{\center \dynkin{E}{IX}\\ \footnotesize $(E_8,E_7+A_1)$\\ $(E_8,E_7\times_{\Z_2} \SL_2)$}       \\ 
	& \\ \hline \hline 
	\end{tabular}
	\end{center}
\vspace{1pt}
\caption{Satake diagrams}
\label{tab:Satake}
\end{table}

\begin{remark}\label{rem:Satake-extreme}
	There are two cases that help understand the construction of Satake diagrams and will be used later. For $G$ a simple group, the Satake diagram of the trivial pair $(G,G)$ is the Dynkin diagram of $G$ with all the nodes coloured in black, whereas the Satake diagram of $(G\times G,\Delta G)$ consists of two copies of the Dynkin diagram of $G$, with all nodes white, with the corresponding nodes connected by a bar. For example, for $G=\SL_n$ or of type $A_n$ we would have the following diagrams.
\begin{center}
\parbox[t]{140pt}{\center \medskip \dynkin{A}{}\\ \footnotesize    $(\SL_{n},\SL_{n})$}   \parbox[t]{140pt}{\center \begin{tikzpicture}
	\dynkin[name=upper]{A}{I}
	\node (current) at ($(upper root 1)+(0,-.3cm)$) {};
	\dynkin[at=(current),name=lower]{A}{I}
	\begin{scope}[on background layer]
	\foreach \i in {1,...,4}%
	{%
		\draw[/Dynkin diagram/foldStyle]
		($(upper root \i)$) -- ($(lower
		root \i)$);%
	}%
	\end{scope}
	\end{tikzpicture}\\ \footnotesize  $(\SL_{n}\times \SL_n,\Delta\SL_{n})$}
\end{center}
\end{remark}

Conversely, the Satake diagram, together with the Lie algebras $\fa$ and $\ft$, determines the complex symmetric pair at the level of Lie algebras. For $G$ simply connected, it is possible to recover the involution, and hence $H$, from the subgroups $A$ and $T$, together with the Satake diagram. However, it is not possible to easily read $H$ from the Satake diagram. 

When $G$ is not simply connected, one has to be especially careful, as Lie algebra pairs related by an outer automorphism could yield symmetric pairs that are not isomorphic and may have indeed  very different properties. This is the case of
$$\dynkin{D}{oo**}$$
when it gives the pairs $(\SO_8,\GL_4)$ and $(\SO_8,\SSS(\OO_6\times \OO_2))$, whose different behaviour will be exhibited in Section \ref{sec:classical-pleasant}. However, since our main focus is on simply connected pairs, we leave further study of this issue for future work.


\begin{remark}
	Recall that complex symmetric pairs $(G,H,\theta)$ are the complexification of Riemannian symmetric pairs, where $(G_r,H_r,\theta_r)$ where $G_r$ is a real form of $G$, the group $H_r$ is a maximal compact subgroup of $G_r$ and $\theta_r$ is the corresponding involution.
\end{remark}

%
%
%
%
%

From now on, we will mostly use Lie types to refer to symmetric pairs. For instance, $(\SL_2,\C^*)$ is denoted by $(A_1,\C)$ when it is clear that the group is $\SL_2$. We will mostly, apart from the orthogonal and Spin groups, refer to $G$ simply connected when dealing with pairs.

\subsection{Linear, symplectic and orthogonal pleasant pairs}
\label{sec:classical-pleasant}

 For the sake of simplicity, we first study symmetric pairs $(G,H)$ where $G$ is $\SL$, $\Sp$ or $\SO$. We shall use their Lie type  in the proofs (the correspondence can be read from Table \ref{tab:Satake}).

We fix some notation. Let $I_p$ denote the identity matrix of rank $p$, we define
$$ I_{p,q}=\begin{pmatrix}[cc]
I_p & 0     \\ 
0 & -I_q
\end{pmatrix}, \qquad J=\begin{pmatrix}[cc]
0 & -I_p     \\ 
I_p &  0
\end{pmatrix}.$$


\begin{proposition}\label{prop:pleasant-linear}
	The symmetric pairs $(\SL_{2n+1}, \SO_{2n})$ and $(\SL_{2n},\Sp_{2n})$ are pleasant, whereas the pair $(\SL_{2n}, \SO_{2n})$ is not pleasant. The pair $(\SL_{p+q},\SSS(\GL_{p}+\GL_{q}))$ is pleasant if and only if $p\neq q$.
\end{proposition}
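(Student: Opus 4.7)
The plan is to verify each of the four cases by applying the criteria developed in Section 4.2 (Propositions \ref{prop:centerodd}, \ref{prop:centeroddeven}, \ref{prop:innerauto}), with one case requiring a direct determinantal argument. In all cases the involution comes from the standard realization: $\theta(g)=(g^t)^{-1}$ for the orthogonal pairs, $\theta(g)=J(g^t)^{-1}J^{-1}$ for the symplectic pair, and $\theta=\Ad I_{p,q}$ for $\SSS(\GL_p\times\GL_q)$. The centre of $\SL_m$ is the group $\mu_m$ of $m$-th roots of unity acting by scalar matrices, and on it $\theta$ acts as $\lambda\mapsto\lambda^{-1}$ in the outer cases and trivially in the inner case.

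For $(\SL_{2n+1},\SO_{2n+1})$, the centre has odd order $2n+1$ and $\theta(\lambda)=\lambda^{-1}$, so Proposition \ref{prop:centeroddeven}(a) applies directly. For $(\SL_{p+q},\SSS(\GL_p\times\GL_q))$ with $p\neq q$, $\theta$ is inner with $V_+=\C^p$, $V_-=\C^q$ of different dimensions, and Proposition \ref{prop:innerauto}(a) yields pleasantness (this is essentially the content of Example \ref{ex:SLpq}). When $p=q$, the element $\begin{psmallmatrix}0&I_p\\\pm I_p&0\end{psmallmatrix}$ (with the sign chosen so that the determinant is $1$) lies in $\SL_{2p}$ and interchanges $V_+$ and $V_-$, so together with $-\Id\in Z(\SL_{2p})$, Proposition \ref{prop:innerauto}(b) shows the pair is not pleasant.

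For $(\SL_{2n},\Sp_{2n})$, the argument needs the determinant condition. If $g\in\cA_\theta$ with $\theta(g)=\lambda g$, then $g^tJg=\lambda^{-1}J$. Taking determinants (or Pfaffians), $\det(g)^2\det(J)=\lambda^{-2n}\det(J)$ and, more refined using the identity $\det(g)=\mu(g)^n$ for elements $g$ of the symplectic similitude group with multiplier $\mu(g)$, one obtains $1=\det(g)=\lambda^{-n}$, that is, $\lambda^n=1$. Hence $\lambda^{-1}\in\mu_n$ is a square in $\mu_{2n}=Z(\SL_{2n})$; pick $\nu\in\mu_{2n}$ with $\nu^2=\lambda^{-1}$. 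Then $h:=\nu^{-1}g$ satisfies $h^tJh=\nu^{-2}g^tJg=J$ and $\det(h)=1$, so $h\in\Sp_{2n}$ and $\Ad g=\Ad h\in\Ad\Sp_{2n}$.

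For $(\SL_{2n},\SO_{2n})$, the determinant constraint evaporates: from $g^tg=\lambda^{-1}I$ we get only $\det(g)^2=\lambda^{-2n}=1$, automatic in $\SL$. I plan to apply Proposition \ref{prop:centeroddeven}(b): the centre $\mu_{2n}$ has non-squares (for instance any primitive $2n$-th root of unity) and $\theta(\mu)=\mu^{-1}$ is automatic. For such a non-square $\mu$, exhibit $g\in\SL_{2n}$ with $\theta(g)=\mu g$ by taking $g=c\tau$ with $\tau\in\OO_{2n}\setminus\SO_{2n}$ a reflection and $c\in\C^*$ a fourth root of $\mu^{-2}$, chosen so that $c^2=\mu^{-1}$ and $\det(g)=-c^{2n}=1$ — both conditions can be met simultaneously precisely because $\mu$ is not a square in $\mu_{2n}$. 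The main (minor) obstacle is this last bookkeeping: verifying that the sign choices for $c$ and the reflection $\tau$ are mutually consistent for every non-square $\mu$, and that the resulting $g$ indeed cannot be written as a central multiple of an element of $\SO_{2n}$, which is the content of Proposition \ref{prop:centeroddeven}(b).
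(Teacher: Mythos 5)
Your proof is correct and follows essentially the same route as the paper: the odd-centre criterion for $(\SL_{2n+1},\SO_{2n+1})$, Proposition \ref{prop:innerauto} for $\SSS(\GL_p\times\GL_q)$, and an explicit element $g$ with $\theta(g)=\mu g$ for a non-square central $\mu$ in the $(\SL_{2n},\SO_{2n})$ case (your bookkeeping worry is harmless, since every non-square $\mu\in\mu_{2n}$ automatically satisfies $\mu^n=-1$, which is exactly what makes $c^{2n}=-1$ and hence $\det(c\tau)=1$ hold). The only real divergence is the symplectic case, where you derive $\lambda^n=1$ from the similitude identity $\det(g)=\mu(g)^n$ and then take a square root inside $\mu_{2n}$, whereas the paper sidesteps this by taking an arbitrary scalar square root in $\GL_{2n}$ and using that the $\theta$-fixed points of $\GL_{2n}$ are still $\Sp_{2n}$; both are valid.
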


\begin{proof}
	We look at all the cases:

				$\bullet$ $(A_{2n},B_{n})$ for $n\geq 1$. We have $Z(\SL_{2n+1})=\{ \lambda \Id \st \lambda^{2n+1}=1 \}$, so $|Z(\SL_{2n+1})|$ is odd, and $\theta(g)=(g^\intercal)^{-1}$, which sends $\lambda \Id\in Z(\SL_{2n+1})$ to $\lambda^{-1}\Id$. By Proposition \ref{prop:centerodd}a), the pair is pleasant.	
	
	$\bullet$  $(A_{2n-1},D_{n})$ for $n\geq 1$. In this case we have that  $|Z(\SL_{2n})|$ is even and $\theta(g)=(g^\intercal)^{-1}$. Take a primitive $4n$-th root of unity $\alpha$ and consider, with a block partition $(2n-1,1)$, the element  $$g= \begin{pmatrix}[cc]
	\al^{-1} &  0    \\ 
	0   & -\al^{-1}
	\end{pmatrix}\in \SL_{2n}.$$ We have $\theta(g)= (\al^2 \Id) g$, so $g\in \cA_\theta$, as $\al^{2} \Id\in Z(G)$. Since $\al^{2} \Id$ has no square root in $Z(G)$, by Proposition \ref{prop:centeroddeven}b), the pair is not pleasant.
	
$\bullet$ $(A_{2n-1},C_n)$. The involution in $\SL_{2n}$ is given by $\theta(g)=(\Ad J)((g^\intercal)^{-1})$. Consider $g\in \SL_{2n}$ such that $\theta(g)=gz$, with $z\in Z(\SL_{2n})$. We can take $\mu\in \GL_{2n}$ with $\mu^2=z$, so that $g\mu\in \GL_{2n}$ satisfies $\theta(g\mu)=g\mu$ for the same involution $\theta$ in $\GL_{2n}$. But since the fixed points of $\theta$ in $\GL_{2n}$ are also $\Sp_{2n}$, we have that $g\mu\in \Sp_{2n}$ and $\Ad g=\Ad g\mu \in \Ad H$.
	
	$\bullet$ $(A_{r+s+1},A_{r}+A_s+\C)$.  This is actually the pair of Example \ref{ex:SLpq}. By Proposition \ref{prop:innerauto}, this pair is pleasant if and only if $r\neq s$. For $r=s$, consider, with $n=r+1$, the $(n,n)$-block matrix 
	$g= \begin{pmatrix}[cc]
	0 &  (-1)^{n}   \\ 
	1  & 0
	\end{pmatrix}$
	\end{proof}

\begin{remark}
For each subgroup $F\subseteq Z(\SL_n)$, there are symmetric pairs associated to $\SL_n/F$ of any of the types discussed above. We check whether some of them may become pleasant when passing to the quotient.
For $(A_{2n-1},D_{n})$ and $(A_{2n-1},C_n)$, the centre of $\SL_{2n}/F$ is cyclic, the argument from the proof of Proposition \ref{prop:pleasant-linear} still applies if  $|Z(\SL_{2n}/F)|$ is even, whereas Proposition \ref{prop:centerodd}a) applies if $|Z(\SL_{2n}/F)|$ is odd. For $(A_{2r+1},A_{r}+A_r+\C)$, the same applies as long as $-\Id\in \SL_{2r+2}$, so if $|Z(\SL_{2r}/F)|$ is even, the pair is not pleasant, and if $|Z(\SL_{2r}/F)|$ is odd, the pair is  pleasant
\end{remark}

\begin{proposition}\label{prop:pleasant-symplectic}
	The symmetric pair  $(\Sp_{2(r+s)},\Sp_{2r}\times \Sp_{2s})$ is pleasant if and only if $r\neq s$, whereas $(\Sp_{2n}, \GL_{n})$ is not pleasant.
\end{proposition}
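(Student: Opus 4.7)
The plan is to apply Proposition \ref{prop:innerauto} to both pairs, exploiting the fact that the relevant involutions are inner (both given by conjugation by an order-two block matrix) and that $-\Id \in Z(\Sp_{2n})$ is a central element available for part~(b) of that proposition.

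For $(\Sp_{2(r+s)},\Sp_{2r}\times \Sp_{2s})$, I would realize the pair explicitly by splitting $V=\C^{2(r+s)}$ as an $\omega$-orthogonal sum $V_+ \oplus V_-$ of symplectic subspaces of dimensions $2r$ and $2s$, and taking $\theta=\Ad k$ with $k|_{V_\pm}=\pm\Id$. Then the $\pm 1$-eigenspaces of $k$ on $V$ are precisely $V_+$ and $V_-$. When $r\neq s$ we have $\dim V_+ \neq \dim V_-$, so Proposition \ref{prop:innerauto}(a) immediately gives pleasantness. When $r=s$, I would write down an explicit symplectic isomorphism $g:V_+\to V_-$ (send an ordered symplectic basis of $V_+$ to one of $V_-$ and vice versa); the resulting $g\in \Sp_{4r}$ interchanges $V_+$ and $V_-$, so Proposition \ref{prop:innerauto}(b) shows the pair is not pleasant.

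For $(\Sp_{2n},\GL_n)$, I would use the standard embedding in which $\GL_n$ sits as block-diagonal matrices $\mathrm{diag}(A,(A^\intercal)^{-1})$ with respect to the symplectic form with matrix $J$, and $\theta=\Ad I_{n,n}$. The eigenspaces $V_\pm$ both have dimension $n$ (they are the two Lagrangian halves), so part (a) does not apply; however, the matrix $J$ itself is symplectic and interchanges $V_+$ and $V_-$, so Proposition \ref{prop:innerauto}(b) yields non-pleasantness.

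The only non-formal step is the $r=s$ construction: I must verify that the block-swap matrix lies in $\Sp_{4r}$, which is a direct computation once the symplectic form is written as a direct sum of the forms on $V_+$ and $V_-$ and the swap is chosen to intertwine them. Everything else reduces to quoting Proposition \ref{prop:innerauto} with the correct $k$ and $g$.
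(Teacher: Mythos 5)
Your proposal is correct and follows essentially the same route as the paper: both cases are handled by Proposition \ref{prop:innerauto}, using part (a) when the eigenspace dimensions differ and part (b) with an explicit symplectic swap (the block-exchange matrix for $r=s$, and $J$ for $(\Sp_{2n},\GL_n)$) when they coincide. The only detail the paper makes fully explicit that you defer is the concrete form of the swap matrix in the $r=s$ case, which is indeed a routine verification.
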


\begin{proof}
	As before, we look at the possible cases:

		$\bullet$ $(C_{r+s},C_r+C_s)$. The involution is $\theta=\Ad {\begin{psmallmatrix}
		I_{r,s} &  0    \\ 
			0  & I_{r,s}
			\end{psmallmatrix}}.$ By Proposition \ref{prop:innerauto}, this pair is pleasant if and only if $r\neq s$. When $r=s$, one can consider $$g= \begin{pmatrix}[cccc]
		0 &  1   & 0 &  0 \\ 
		1 &  0   &  0&  0 \\ 
		0 &  0   & 0& 1 \\ 
		0 &  0   & 1 &0 
\end{pmatrix}.$$

		$\bullet$ $(C_n,A_{n-1}+\C)$.  The involution is $\theta=\Ad {I_{n,n}}$. By Proposition \ref{prop:innerauto}b) with $g= \begin{pmatrix}[cc]
		0 &  1    \\ 
    	-1  & 0
		\end{pmatrix}$, the pair is not pleasant.	
				
\end{proof}

\begin{proposition}\label{prop:pleasant-orthogonal}
	The symmetric pairs $(\SO_{p+q},\SSS(\OO_p\times \OO_q))$ are pleasant if and only if $p\neq q$, whereas $(\SO_{2r},\GL_r)$ for $r\geq 4$ is not pleasant.
\end{proposition}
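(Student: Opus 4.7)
For the first pair $(\SO_{p+q},\SSS(\OO_p\times\OO_q))$, I realize $\SO_{p+q}$ with the standard bilinear form so that $\theta=\Ad I_{p,q}$ with $k=I_{p,q}\in\GL_{p+q}$ of order two and $\pm 1$-eigenspaces of dimensions $p$ and $q$. Proposition~\ref{prop:innerauto}(a) immediately yields pleasantness when $p\neq q$. When $p=q$, I construct a witness for Proposition~\ref{prop:innerauto}(b) in block-antidiagonal form $g=\begin{psmallmatrix}0 & A\\ B & 0\end{psmallmatrix}$ with $A,B\in\OO_p$: such a $g$ automatically preserves the standard form and swaps $V_+$ with $V_-$, and $\det g=(-1)^p\det A\det B$, so the choice $A=B=I_p$ when $p$ is even, and $A=I_p$, $B=\operatorname{diag}(-1,1,\dots,1)$ when $p$ is odd, places $g$ in $\SO_{2p}\setminus HZ(\SO_{2p})$ and gives non-pleasantness.

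For the second pair $(\SO_{2r},\GL_r)$ with $r\geq 4$, I pass to the hyperbolic form $\Omega=\begin{psmallmatrix}0 & I_r\\ I_r & 0\end{psmallmatrix}$, under which $\GL_r$ embeds block-diagonally as $A\mapsto\operatorname{diag}(A,(A^\intercal)^{-1})$ and is the fixed subgroup of $\theta=\Ad I_{r,r}$. Both $\pm 1$-eigenspaces of $I_{r,r}$ then have dimension $r$ and $-I\in Z(\SO_{2r})$, so Proposition~\ref{prop:innerauto}(b) applies as soon as one produces $g\in\SO_{2r}$ swapping the eigenspaces. The $\Omega$-orthogonality constraint forces the shape $g=\begin{psmallmatrix}0 & A\\ (A^\intercal)^{-1} & 0\end{psmallmatrix}$, giving $\det g=(-1)^r$. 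For $r$ even I take $A=I_r$: the resulting $g$ lies in $\SO_{2r}$, and being block-antidiagonal it cannot lie in $HZ(\SO_{2r})=\GL_r\sqcup(-\GL_r)$, so it witnesses non-pleasantness.

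The main obstacle is the odd case: the form condition rigidly pins $\det g=-1$, so no block-antidiagonal candidate lies in $\SO_{2r}$, and Proposition~\ref{prop:innerauto}(b) cannot be invoked in the naive manner. I would instead study the equivalent realization $\theta=\Ad(iI_{r,r})$ with $iI_{r,r}\in\SO_{2r}$ of order four (squaring to $-I\in Z(\SO_{2r})$), and analyse the $\SO_{2r}$-conjugacy of $iI_{r,r}$ with its inverse inside a maximal torus: the obstruction is precisely the fact that flipping all $r$ sign parameters lies in the Weyl group of $D_r$ only when $r$ is even. Producing a witness in $\cA_\theta\setminus HZ(G)$ for odd $r$ therefore requires an auxiliary step, most cleanly via a quotient/product reduction (using Section~\ref{sec:covering-quotient}) to the even case already treated, and this is the sole genuinely technical point of the proof.
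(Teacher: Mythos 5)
Your treatment of $(\SO_{p+q},\SSS(\OO_p\times\OO_q))$ follows the paper's route exactly: Proposition~\ref{prop:innerauto}(a) for $p\neq q$, and an explicit block-antidiagonal witness for $p=q$. Your witness is in fact more careful than the paper's, whose element $\begin{psmallmatrix}0&i^{2p}\\i^{2p}&0\end{psmallmatrix}$ has determinant $(-1)^p$; your adjustment $\det A\cdot\det B=(-1)^p$ is genuinely needed when $p$ is odd. Your even-$r$ argument for $(\SO_{2r},\GL_r)$ likewise coincides with the paper's.

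The genuine problem is the odd-$r$ case of the second pair, and the escape route you sketch cannot work. Your own computation shows that every $g\in\OO_{2r}(\Omega)$ with $\theta(g)=-g$ is block-antidiagonal of determinant $(-1)^r$, and since $Z(\SO_{2r})=\{\pm I\}$ these are the only candidates for $\cA_\theta\setminus H$. Hence for odd $r$ the set $\cA_\theta\setminus H$ is \emph{empty} and the pair satisfies the definition of pleasantness vacuously: no auxiliary step can produce a witness that does not exist. A quotient or product reduction goes the wrong way (quotients of pleasant pairs are pleasant; non-pleasantness does not transfer back from $\PSO_{2r}$ or from an even-rank relative), and your own Weyl-group observation --- that $-1\in W(D_r)$ only for $r$ even, so $iI_{r,r}$ is not $\SO_{2r}$-conjugate to its inverse when $r$ is odd --- confirms the emptiness rather than circumventing it. What your setup actually proves is that $(\SO_{2r},\GL_r)$ is pleasant for odd $r$, consistent with the low-rank check $(\SO_6,\GL_3)\cong(\SL_4,\SSS(\GL_1\times\GL_3))/\{\pm I\}$, a quotient of a pleasant pair. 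Be aware that the paper's one-line proof of this bullet uses the same determinant-$(-1)^r$ element and therefore also only covers even $r$; the honest conclusion of your argument is a parity dichotomy, not a uniform non-pleasantness statement, and the discrepancy with the proposition as printed should be flagged rather than papered over with a nonexistent reduction.
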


\begin{proof}
Just as before, 

$\bullet$ $(B_{r+s},B_r+D_s)$, $(D_{r+s},D_{r}+D_s)$, $(D_{r+s+1},B_{r}+B_s)$, that is, the pairs of the form $(\SO_{p+q},\SSS(\OO_p\times \OO_q))$. By Proposition \ref{prop:innerauto}, they are pleasant if and only if $p\neq q$. When $p=q$, consider the element the determinant-one element $g= \begin{pmatrix}[cc]
0 &  i^{2p}    \\ 
i^{2p}  & 0
\end{pmatrix}$.

$\bullet$ $(D_{r},A_{r-1}+\C)$.  By Proposition \ref{prop:innerauto}b) with $g= \begin{pmatrix}[cc]
0 &  i^{2p}    \\ 
i^{2p}  & 0
\end{pmatrix}$, the pair is not pleasant.

\end{proof}

\subsection{Spin pleasant pairs}
\label{sec:spin-pairs}

Although the Spin group can be seen as a group of matrices via the spinor representation, it will be simpler to prove or disprove pleasantness using Proposition \ref{prop:pleasant-orthogonal} about $\SO$ and the covering map $\Spin\to \SO$. 

As usual, we realize the Spin group inside the Clifford algebra of a non-degenerate quadratic vector space $(V,Q)$,
 $$\Cl(V,Q)= \frac{\otimes^\bullet V}{gen(v\otimes v - Q(v)\cdot 1)}.$$
The Spin group corresponds to 
$$ \Spin(V,Q)= \{ v_1\ldots v_{2l} \st Q(v_i)=\pm 1 \}, $$
where juxtaposition denotes the Clifford product.

Since we are working over an algebraically closed field, all quadratic forms over the same vector space are equivalent, so we talk simply about $\Spin_n$.

The centre of the Spin group depends on the parity of $n$. For $n=2m+1$ odd, 
\begin{equation}\label{eq:center-Spin2m+1}
Z(\Spin_{2m+1}) = \{ \pm 1\} \cong \Z_2,
\end{equation}
For $n=2m$ even, we have
$$Z(\Spin_{2m}) = \{ \pm 1, \pm \omega \},$$
where $\omega$ is the product of a chosen orthonormal basis $\{e_1,\ldots,e_{2m}\}$ of $V$,
$$ \omega= e_1 \ldots e_{2m} \in \Spin_{2m}, $$
which is well defined up to sign.

As $e_ie_j+e_je_i=\delta^i_j$, we have that $\omega^2 = 1$ for $m$ even and $\omega^2=-1$ for $m$ odd, so 
\begin{align}\label{eq:center-Spin4m-4m+2}
Z(\Spin_{4m})&\cong \Z_2\times \Z_2,& Z(\Spin_{4m+2})&\cong \Z_4.
\end{align}
We check that the involution $\theta$ on $\Spin_{r+s}$ corresponding to the Lie algebra pair 
\begin{equation}\label{eq:fsor+s}
(\fso_{r+s}, \fso_r\oplus \fso_s)
\end{equation}
is given by the action of $I_{r,s}$ by
$$ v_1\ldots v_{2l} \mapsto (I_{r,s} v_1)\ldots (I_{r,s} v_{2l}).$$
Let $\pi:\Spin_n\to \SO_n$ be the $2:1$-covering map given by $$\pi: g\mapsto  (x\mapsto gxg^{-1}).$$
We have $\pi(v_1\ldots v_{2l}) = R_{v_1} \circ \ldots\circ R_{v_{2l}},$ where $R_v$ denotes the reflection with respect to $v$. Since 
$$ I_{r,s}\circ R_ v \circ I_{r,s} = R_{I_{r,s}v},$$
we have, with $\Ad {I_{r,s}}$ denoting the conjugation by $I_{r,s}$ both on $\SO$ or $\fso$, that 
\begin{center}
	\begin{tikzcd}\Spin_{r+s} \arrow[rr, "\theta"]\arrow[d, "\pi"]& & \Spin_{r+s} \arrow[d, "\pi"] \\ \SO_{r+s} \arrow[rr,"\Ad {I_{r,s}}"]& & \SO_{r+s} \\
		\fso_{r+s}\arrow[u,"exp"] \arrow[rr,"\Ad {I_{r,s}}"]& & \fso_{r+s} \arrow[u,"exp"] \end{tikzcd}
\end{center}
commutes, that is, $\theta$ covers the involution on $\SO_{r+s}$ and hence $\fso_{r+s}$.

The $\Spin$ pair corresponding to the Lie algebra pair \eqref{eq:fsor+s} is 
$$ (\Spin_{r+s},\Spin_r \times_{\Z_2} \Spin_s).$$ The quotient by $\Z_2$ can be easily understood from the map 
\begin{align*}
\Spin_r \times \Spin_s & \to \Spin_{r+s} \\
 (v_1\ldots v_{2l} , w_1\ldots w_{2m}) &\mapsto v_1\ldots v_{2l}w_1\ldots w_{2m},  
\end{align*}
whose kernel is $\la (-1,-1)\ra \cong \Z_2$. This pair can be compared to the orthogonal pair $(\SO_{r+s},\SSS(\OO_r\times \OO_s))$ along the lines of Example \ref{ex:SO-PSO}.

The next step in order to apply the criteria of Section \ref{sec:criteria} is to know how the involution acts on the centre. Recall that the element $-1$ is represented in  $\Spin$, for some $v$ such that $Q(v)=\pm 1$, by
$$ -1  = (-v)\cdot (v/Q(v)).$$
Hence, we have
$$\theta(-1)=\theta((-v)\cdot (v/Q(v)))= (-I_{r,s}v) \cdot (I_{r,s}v/Q(v)) = -1,$$
as $I_{r,s}$ is an isometry.
When $n=r+s$ is even, by choosing $\{e_1,\ldots,e_{r+s}\}$ the same basis in which $I_{r,s}$ is expressed,  we have 
\begin{equation}\label{eq:theta-omega}
 \theta(\omega) = \theta(e_1\ldots e_{r+s}) = (-1)^s e_1\ldots e_{r+s} = (-1)^s \omega.
\end{equation}

\begin{proposition}
	A symmetric pair associated with $\Spin_{r+s}$ is pleasant if and only if it is $(\Spin_{r+s},\Spin_r \times_{\Z_2} \Spin_s)$ with $r+s=4q+2$ and $r\neq s$ odd numbers. 
\end{proposition}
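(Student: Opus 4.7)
The plan is to use the covering map $\pi : \Spin_{r+s} \to \SO_{r+s}$ to transfer the analysis of pleasantness from the orthogonal pair, whose behaviour is governed by Proposition \ref{prop:pleasant-orthogonal}. Two families of Spin symmetric pairs need to be distinguished: the inner family $(\Spin_{r+s}, \Spin_r \times_{\Z_2} \Spin_s)$ covering $(\SO_{r+s}, \SSS(\OO_r \times \OO_s))$, and the outer family $(\Spin_{2r}, \GL_r)$ covering $(\SO_{2r}, \GL_r)$. The outer family is ruled out by lifting the obstruction from the proof of Proposition \ref{prop:pleasant-orthogonal}: a preimage in $\Spin_{2r}$ of $\begin{psmallmatrix} 0 & i^{2p} \\ i^{2p} & 0 \end{psmallmatrix}$ lies in $\cA_\theta$ but not in $Z(\Spin_{2r}) \cdot \GL_r$.

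For the inner family, I would introduce the homomorphism
\[
\phi : \cA_\theta \to Z := Z(\Spin_{r+s}), \qquad \phi(g) = g^{-1}\theta(g),
\]
whose kernel is exactly $H$. Since $g \in Z \cdot H$ if and only if $\phi(g) \in \phi(Z)$, pleasantness is equivalent to the inclusion $\phi(\cA_\theta) \subseteq \phi(Z)$, and it suffices to compute both sides. Using $\theta(-1) = -1$ and $\theta(\omega) = (-1)^s \omega$ from \eqref{eq:theta-omega}, one obtains $\phi(Z) = \{1\}$ when $r + s$ is odd or when both $r,s$ are even, and $\phi(Z) = \{\pm 1\}$ when $r,s$ are both odd. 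On the other side, the Clifford element $v_1 w_1$ with $v_1 \in V_r$, $w_1 \in V_s$ unit vectors satisfies $\theta(v_1 w_1) = -v_1 w_1$, providing $-1 \in \phi(\cA_\theta)$. For the upper bound, if $g \in \cA_\theta$ with $\phi(g) = \lambda$, then $\pi$ yields $I_{r,s} \pi(g) I_{r,s}^{-1} = \pi(\lambda) \pi(g)$, and Proposition \ref{prop:innerauto} rules out $\pi(\lambda) = -I$ (i.e.\ $\lambda = \pm \omega$) except when $r = s$.

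Combining these, the cases $r + s$ odd and $r,s$ both even are eliminated since $v_1 w_1$ gives $-1 \in \phi(\cA_\theta) \setminus \phi(Z)$. The case $r = s$ is eliminated because a Spin lift of the $\SO$-element swapping $V_r$ and $V_s$ produces an element of $\cA_\theta$ with $\phi$-image $\pm\omega \notin \phi(Z) = \{\pm 1\}$. In the remaining case $r \neq s$ with $r, s$ odd, the upper bound yields $\phi(\cA_\theta) \subseteq \{\pm 1\} = \phi(Z)$, so the pair is pleasant; the concrete coset identification can be verified by an explicit Clifford computation showing $v_1 w_1 = (-\omega) \cdot h$ for $h = e_2 \cdots e_r \cdot e_{r+2} \cdots e_{r+s} \in H$.

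The main obstacle is the case distinction for $r,s$ both odd according to $r + s \bmod 4$, where $Z$ changes from $\Z_2 \times \Z_2$ to $\Z_4$ and the action of $\theta$ on $Z$ changes correspondingly (swap of two $\Z_2$ factors versus inversion on $\Z_4$); the coset identification for $v_1 w_1$ must be verified carefully in each subcase, which is where the parity constraint $r+s = 4q+2$ in the statement emerges through the sign bookkeeping in the Clifford algebra matching $\pi(v_1 w_1)$ against $\pi(\omega) = -I$.
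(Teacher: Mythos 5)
Your reduction of pleasantness to the containment $\phi(\cA_\theta)\subseteq\phi(Z)$ for the homomorphism $\phi(g)=g^{-1}\theta(g)$ is correct and is a clean repackaging of the paper's own argument, which likewise uses the projection to $\SO_{r+s}$, the action of $\theta$ on the centre, and the element $vw$. The gap is in your last paragraph: you defer the decisive case distinction for $r,s$ both odd according to $r+s\bmod 4$ to unspecified ``sign bookkeeping,'' and your own computations show that no such distinction can arise at that point. Indeed $\phi(\omega)=\omega^{-1}\theta(\omega)=(-1)^s$ holds whether $\omega^2=1$ or $\omega^2=-1$, so $\phi(Z)=\{\pm1\}$ whenever $r,s$ are both odd, for $r+s\equiv 0$ and $r+s\equiv 2\pmod 4$ alike; and your upper bound $\phi(\cA_\theta)\subseteq\{\pm1\}$ for $r\neq s$ is also insensitive to $r+s\bmod 4$. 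Carried to its conclusion, your argument certifies pleasantness for every $r\neq s$ with $r,s$ odd. Concretely, for $(\Spin_8,\Spin_7)$ (so $r+s=8=4q$): the element $g=e_1e_2$ satisfies $\theta(g)=-g$, yet $\omega g=-e_3e_4\cdots e_8$ is fixed by $\theta$, hence lies in $H$ and $\Ad g=\Ad(\omega g)\in\Ad H$; and no $g\in\cA_\theta$ can have $\phi(g)=\pm\omega$, since that would force $\pi(g)$ to interchange the $1$- and $7$-dimensional eigenspaces of $I_{1,7}$. So your machinery proves a statement that disagrees with the one you are asked to prove in the case $r+s\equiv 0\pmod 4$.

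As a proof of the proposition as stated, the argument is therefore incomplete exactly where the constraint $r+s=4q+2$ must enter: you would need to exhibit, for $r+s\equiv 0\pmod4$ with $r\neq s$ odd, an element of $\cA_\theta$ outside $Z(G)H$, and your $\phi$-calculus says there is none. The paper disposes of $n=4q$ by applying Proposition \ref{prop:centeroddeven}b) with $\mu=-1$, which has no square root in $Z(\Spin_{4q})\cong\Z_2\times\Z_2$; but the proof of that proposition uses $\theta(\nu)=\nu^{-1}$ for the arbitrary central element $\nu$ with $g=\nu h$, and precisely when $s$ is odd one has $\theta(\omega)=-\omega\neq\omega=\omega^{-1}$ in $\Z_2\times\Z_2$, so that step does not apply (and indeed $\nu=\omega$ realizes $\theta(\nu)=\mu\nu$, which is how $\omega g$ lands in $H$). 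You need to confront this head on rather than promising that the $4q+2$ condition will fall out of Clifford-algebra signs: either find a genuine obstruction for $n=4q$ that the coset analysis misses, or conclude that the characterization should read ``$r\neq s$ both odd'' and flag the discrepancy.
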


\begin{proof}
	To start with, the pair of type $(D_r,A_{r-1}+\C)$ is not pleasant for $\Spin$, as neither is for $\SO$. We shall only look at the pairs $(\Spin_{r+s},\Spin_r \times_{\Z_2} \Spin_s)$.
	
	For $n=r+s$, with $r,s>0$, take $v,w$ such that $Q(v)=\pm 1$, $I_{r,s} v= v$, $Q(w)=\pm 1$ and $I_{r,s} w = -w$. Consider $g=vw\in \Spin_{n}$. We have
	\begin{equation}\label{eq:theta(g)}
	\theta(g)=\theta(vw)=v(-w)=-g.  
	\end{equation}
	By \eqref{eq:center-Spin2m+1}, \eqref{eq:center-Spin4m-4m+2} and Proposition \ref{prop:centeroddeven}b), the pair is not pleasant when $n$ is odd or $n=4q$.
	
	For $n=r+s$ even of the form $4q+2$, we have $\omega^2=-1$. Consider $g$ as in \eqref{eq:theta(g)}, if we had $g=\nu h$ for $\nu\in Z(\Spin_n)=\la \omega\ra\cong \Z_4$, we would have 
	$$-\nu h= -g = \theta(g)=\theta(\nu)h,$$ 
	that is, $\theta(\nu)=-\nu$.
For $s$ even, this is not possible, as $\theta(\omega)=\omega$ by \eqref{eq:theta-omega}. Whereas for $s$ odd, we have $\theta(\omega)=-\omega$, so $\omega g$ satisfies $\theta(\omega g)=\omega g$ and $\Ad g=\Ad \omega g$.

We continue with $n=r+s=4q+2$ with $s$ odd, which is the only possible pleasant case. An element $g\in \cA_\theta \setminus H$ could also possibly satisfy
\begin{equation}\label{eq:theta-g-omega}
 \theta(g)=\pm\omega g.
\end{equation}
 By applying the projection $\pi$ to $\SO_{r+s}$ we get $\pi(\theta(g))=\pi(\omega g)$, that is,
$$ \theta'(\pi(g)) = - \pi(g),$$ 
where $\theta'$ denotes the involution for $\SO_{r+s}$. Recall, from the proof of Proposition \ref{prop:pleasant-orthogonal}, that this is only possible when $r=s$, and in this case we already know that the pair associated to $\Spin$ is not pleasant, as the pair associated to $\SO$ is not pleasant. For $r\neq s$ odd numbers, and $r+s=4q+2$, we have that  \eqref{eq:theta-g-omega} is not possible, so $(\Spin_{r+s},\Spin_r\times_{\Z_2} \Spin_s)$ is pleasant.
\end{proof}

\subsection{Exceptional Lie groups}
\label{sec:exceptional-lie-groups}

In this section we present the exceptional Lie groups  following \cite{yokota}, and focusing especially on $E_6$ and $E_7$.

Let $\Oc$ be the division algebra of octonions, which we see as the Cayley-Dickson process applied to the quaternions $\HH$, that is, $\Oc=\HH\oplus \HH l$ with $l^2=-1$. Consider the conjugation $x\mapsto \overline{x}$ of $\Oc$ fixing only the real numbers. Let $\Oc_\C$ denote the complexification of the octonions. The automorphism group of $\Oc_\C$ is
$$G_2:=\Aut(\Oc_\C)=\{ \al\in \Iso_\C(\Oc_\C) \st \alpha(xy)=\alpha(x)\alpha(y) \}.$$

Define a conjugation on $\Oc_\C$ as the $\C$-linear extension of the conjugation on $\Oc$,
$$ x+iy\mapsto \overline{x+iy}=\overline{x}+i\overline{y}.$$ 
Let $\fJ$ be the $27$-dimensional complex subspace of $3\times 3$ matrices over $\Oc_\C$ given by 
 $$\fJ=\left\{ \begin{pmatrix}
 \xi_1 & x_3 & \overline{x}_2 \\
 \overline{x}_3 & \xi_2& x_1 \\
 x_2 & \overline{x}_1  & \xi_3 \\
 \end{pmatrix} \st \xi_i \in \C, x_i\in \Oc_\C \right\},$$
 with the commutative group operation 
 $$ X\circ Y = \frac{1}{2} (XY + YX).$$ 
 This is actually the complexification of the hermitian $3\times 3$ matrices over $\Oc$, and is usually referred to as the complex exceptional Jordan algebra. The group of automorphisms of $\fJ$ is
 $$F_4:= \Aut(\fJ) = \{ \al\in Iso_\C(\fJ) \st \alpha(X\circ Y)=\alpha(X)\circ \alpha(Y) \}.$$
 
The algebra $\fJ$ has a determinant operator
 $$ det: \fJ\to \Oc_\C,$$
 whose polarization is denoted by $(\cdot,\cdot,\cdot):\fJ^3\to \C$. The group of determinant-preserving automorphisms is
$$ E_6 := \{ \alpha\in \Iso_\C(\fJ) \st \det(\alpha X)=\det X \textrm{ for all } X\in \fJ \}.$$
It satisfies $Z(E_6)=\la \omega \Id \ra \cong \Z_3$ where $\omega$ is a third root of unity. Its Lie algebra is
$$ \fe_6 = \{ \al\in \End_\C(\fJ) \st (\al X,X,X)=0 \}.$$

Consider the complex Freudenthal vector space
$$ \fB = \fJ \oplus \fJ \oplus \C \oplus \C,$$
whose automorphisms we denote by complex  $(27,27,1,1)$-block matrices,
\begin{equation}\label{eq:matrixE7}
\begin{pmatrix}[cccccc]
\multicolumn{2}{c}{\multirow{2}{*}{$A$}} & \multicolumn{2}{c}{\multirow{2}{*}{$B$}} & \multirow{2}{*}{$k$} & \multirow{2}{*}{$l$}  \\
& & & & \\ 
\multicolumn{2}{c}{\multirow{2}{*}{$C$}} & \multicolumn{2}{c}{\multirow{2}{*}{$D$}} & \multirow{2}{*}{$p$} & \multirow{2}{*}{$q$}  \\
& & & &\\ 
\multicolumn{2}{c}{r}  & \multicolumn{2}{c}{s} & a & b\\ 
\multicolumn{2}{c}{u} & \multicolumn{2}{c}{v} & c & d
\end{pmatrix}.
\end{equation}
In order to define $E_7$, we endow $\fJ$ with some extra structure. For $X,Y\in \fJ$, let $(X,Y)=tr(X\circ Y)$ be a bilinear form on $\fJ$, and  define the Freudenthal product by
$$ X\times Y :=\frac{1}{2}(2X\circ Y -tr(X)Y-tr(Y)X + (tr(X)tr(Y)-(X,Y)\Id)).$$
Any $X\in \fJ$ defines $\widetilde{X}\in\Hom_\C(\fJ)$ by left multiplication, $\widetilde{X}(Y)=X\circ Y$, for $Y\in\fJ$. The element defined by
$$ X \vee Y := [\widetilde{X},\widetilde{Y}] + (X\circ Y - \frac{1}{3}(X,Y)\Id)\widetilde{\phantom{a}} $$
belongs to $\fe_6$, so we get an operation $\vee:\fJ\times \fJ\to \fe_6$.

For two elements $P=(X,Y,\xi,\eta)$, $Q=(Z,W,\zeta,\omega)$ in $\fB$, define a linear mapping $P\times Q:\fB\to \fB$ by 
\begin{equation}\label{eq:Phi-E7}
P\times Q:=\Phi(\phi,A,B,\nu) := \begin{pmatrix}[cccccc]
\multicolumn{2}{c}{\multirow{2}{*}{$\phi-\frac{1}{3}\nu$}} & \multicolumn{2}{c}{\multirow{2}{*}{$2B\times $}} & \multirow{2}{*}{$0$} & \multirow{2}{*}{$A$}  \\
& & & & \\ 
\multicolumn{2}{c}{\multirow{2}{*}{$2A \times $}} & \multicolumn{2}{c}{\multirow{2}{*}{$-^t\phi+\frac{1}{3}\nu$}} & \multirow{2}{*}{$B$} & \multirow{2}{*}{$0$}  \\
& & & &\\ 
\multicolumn{2}{c}{0}  & \multicolumn{2}{c}{(A,\cdot)} & \nu & 0\\ 
\multicolumn{2}{c}{(B,\cdot )} & \multicolumn{2}{c}{0} & 0 & -\nu
\end{pmatrix},
\end{equation}
where $\phi=-\frac{1}{2}(X\vee W + Z\vee Y)$, $A=-\frac{1}{4}(2Y\times W-\xi Z -\zeta X)$, $B=\frac{1}{4}(2X\times Z - \eta W -\omega Y)$ and $\nu=\frac{1}{8}((X,W)+(Z,Y)-3(\xi\omega+\zeta\eta))$.


\begin{definition}
	The exceptional Lie group $E_7$ is given by
	$$ E_7 :=\{ \al \in \mathrm{Iso}_\C(\fB) \st \al(P\times Q)\al^{-1} = \al P \times \al Q \textrm{ for all } P,Q \in \fB\}.$$
\end{definition}
{\noindent It satisfies $Z(E_7)=\{\pm \Id\}\cong Z_2$, and its Lie algebra is denoted by $\fe_7$.}

The group $E_8$ is defined in terms of its Lie algebra, whose underlying $248$-dimensional vector space is $$\fe_8=\fe_7\oplus \fB\oplus \fB\oplus \C\oplus \C\oplus \C,$$ and whose bracket is given as in \cite[Sec. 5]{yokota}. We then have 
$$ E_8 := \{ \al \in\Iso_\C(\fe_8) \st \al([X,Y])=[\al(X),\al(Y)] \textrm{ for all } X,Y\in \fe_8 \}. $$
A very important property is that the groups $G_2$, $F_4$ and $E_8$ have trivial centre.

The definitions above will allow us to describe, when necessary, the involutions of the corresponding exceptional symmetric pairs and determine their pleasantness.

\subsection{Exceptional pleasant pairs}
\label{sec:exceptional-pleasant}

The following theorem sums up the study of pleasantness for exceptional complex symmetric pairs.

\begin{theorem}\label{theo:exceptional-pleasant}
	All the exceptional symmetric pairs $(G,H)$ for simply-connected $G$ are pleasant apart from $(E_7,E_6+\C)$ and $(E_7,A_7)$.
\end{theorem}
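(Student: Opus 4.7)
The proof organizes by the center of $G$. The groups $G_2$, $F_4$, and $E_8$ are centerless (Section \ref{sec:exceptional-lie-groups}), so Lemma \ref{lemma:centerless} disposes at once of $(G_2,A_1+A_1)$, $(F_4,B_4)$, $(F_4,C_3+A_1)$, $(E_8,D_8)$, and $(E_8,E_7+A_1)$.

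Next, $Z(E_6) = \la \omega\Id\ra \cong \Z_3$ has odd order and $\Aut(\Z_3) = \{\pm 1\}$, so for each of the four $E_6$-pairs the restriction $\theta|_{Z(E_6)}$ is either the identity or the inversion map $\lambda \mapsto \lambda^{-1}$. In the first case Proposition \ref{prop:centerodd} applies; in the second, Proposition \ref{prop:centeroddeven}(a) applies. Either way, each of $(E_6,C_4)$, $(E_6,A_5+A_1)$, $(E_6,F_4)$, $(E_6,D_5+\C)$ is pleasant.

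The substantial work is for $G = E_7$, where $Z(E_7) = \{\pm \Id\}$ is of even order and all three involutions are inner (the Satake diagrams \dynkin{E}{V}, \dynkin{E}{VI}, \dynkin{E}{VII} contain no bars), so the criteria based on the action on $Z(G)$ no longer suffice. The plan is to work inside the concrete realization $E_7 \subset \Iso_\C(\fB)$ on the Freudenthal space $\fB = \fJ \oplus \fJ \oplus \C \oplus \C$ of Section \ref{sec:exceptional-lie-groups}, and in each case identify $\theta$ as $\Ad k$ for a specific $k \in E_7$ whose $\pm 1$-eigenspaces on $\fB$ are compatible with the block structure \eqref{eq:matrixE7}. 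For the two non-pleasant cases $(E_7,A_7)$ and $(E_7,E_6+\C)$, I would exhibit an explicit $g \in E_7$ with $\theta(g) = -g$ and invoke Proposition \ref{prop:centeroddeven}(b) with $\mu = -\Id$, which admits no square root in $Z(E_7)$. For $(E_7,E_6+\C)$ a natural candidate for $g$ is the automorphism interchanging the two copies of $\fJ$ and the two copies of $\C$, suitably normalized so as to lie in $E_7$; this anticommutes with the $k$ whose conjugation fixes the diagonally embedded $E_6 + \C$. For $(E_7,A_7)$ one uses the $\SL_8$-substructure hidden in $E_7$ to produce the analogous element. Conversely, for the pleasant case $(E_7,D_6+A_1)$, the plan is to apply Proposition \ref{prop:innerauto}(a): one shows that the $\pm 1$-eigenspaces of the defining $k$ on the $56$-dimensional space $\fB$, read off from the $\Spin_{12}\times_{\Z_2}\SL_2$-module structure, have unequal dimensions, which rules out any $g$ interchanging them and thus prevents any element of $\cA_\theta$ from lying outside $H\cdot Z(E_7)$.

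The main obstacle is the $E_7$ analysis: each involution must be written as conjugation by a specific $k \in E_7 \subset \Iso_\C(\fB)$, and in the non-pleasant cases it is essential to check that the candidate $g$ actually preserves the Freudenthal operation \eqref{eq:Phi-E7} (so that $g \in E_7$) rather than merely belonging to $\GL(\fB)$. Once these explicit constructions are carried out, the criteria from Section \ref{sec:criteria} close each case.
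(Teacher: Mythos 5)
Your proposal is correct and follows essentially the same route as the paper: Lemma \ref{lemma:centerless} for the centerless $G_2$, $F_4$, $E_8$; Propositions \ref{prop:centerodd} and \ref{prop:centeroddeven}(a) for $E_6$ via its order-$3$ centre; and, for the three $E_7$ pairs, explicit work in the Freudenthal realization --- a dimension count on the $\pm1$-eigenspaces (your Proposition \ref{prop:innerauto}(a) invocation is exactly the paper's inline argument, with eigenspaces of dimensions $24$ and $32$) for $(E_7,D_6+A_1)$, and explicit block-matrix elements $g$ with $\theta(g)=-g$ for $(E_7,E_6+\C)$ and $(E_7,A_7)$, matching the candidates you describe. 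The only substantive work you defer --- verifying that those candidates preserve the Freudenthal cross operation \eqref{eq:Phi-E7} and hence lie in $E_7$ --- is precisely what the paper carries out, so the plan closes as you anticipate.
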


\begin{proof}
	We first apply Lemma \ref{lemma:centerless} for the centreless groups $G_2$, $F_4$ and $E_8$.
	Secondly, recall that $E_6$ has centre $\Z_3$. Thus, for any symmetric pair $(E_6,H)$ the involution $\theta$ can either fix or invert the centre. By Propositions \ref{prop:centerodd} and \ref{prop:centeroddeven}a), any pair with $G=E_6$ is pleasant. 
	
	Finally, for $E_7$ we have that the centre is $\Z_2$ and we have to look case by case. Just as in Section \ref{sec:exceptional-lie-groups}, a good reference for the involutions is \cite{yokota}.
	
	$\bullet$ $(E_7,D_6+A_1)$. Define first the linear involution $\sigma:\fJ\to\fJ$ 
	$$\sigma: \begin{pmatrix}
		\xi_1 & x_3 & \bar{x}_2 \\
		\bar{x}_3 & \xi_2& x_1 \\
		x_2 & \bar{x}_1  & \xi_3 \\
	\end{pmatrix} \mapsto \begin{pmatrix}
	\xi_1 & -x_3 & -\bar{x}_2 \\
	-\bar{x}_3 & \xi_2& x_1 \\
	-x_2 & \bar{x}_1  & \xi_3 \\
\end{pmatrix}.$$
	
		Denote the $\pm 1$-eigenspaces of $\sigma$ on $\cJ$ by $R$ and $S$, respectively, which are $11$ and $16$-dimensional.  	Extend this involution to $\sigma:\fB\to\fB$ by
	$$\sigma(X,Y,\xi,\eta)=(\sigma X,\sigma Y, \xi, \eta),$$ and consider the involution $\theta:E_7\to E_7$ given by
	$$		\theta(\alpha)=\sigma\alpha\sigma. $$
		By using the block-matrix partition \eqref{eq:matrixE7}, the involution $\theta$ acts by 
	$$\theta: \begin{pmatrix}[cccccc]
\multicolumn{2}{c}{\multirow{2}{*}{$A$}} & \multicolumn{2}{c}{\multirow{2}{*}{$B$}} & \multirow{2}{*}{$k$} & \multirow{2}{*}{$l$}  \\
& & & & \\ 
\multicolumn{2}{c}{\multirow{2}{*}{$C$}} & \multicolumn{2}{c}{\multirow{2}{*}{$D$}} & \multirow{2}{*}{$p$} & \multirow{2}{*}{$q$}  \\
& & & &\\ 
\multicolumn{2}{c}{r}  & \multicolumn{2}{c}{s} & a & b\\ 
\multicolumn{2}{c}{u} & \multicolumn{2}{c}{v} & c & d
\end{pmatrix} \mapsto  	\begin{pmatrix}[cccccc]
	\multicolumn{2}{c}{\multirow{2}{*}{$\sigma A\sigma $}} & \multicolumn{2}{c}{\multirow{2}{*}{$\sigma B\sigma $}} & \multirow{2}{*}{$\sigma k$} & \multirow{2}{*}{$\sigma  l$}  \\
	& & & & \\
	\multicolumn{2}{c}{\multirow{2}{*}{$\sigma C\sigma $}} & \multicolumn{2}{c}{\multirow{2}{*}{$\sigma D\sigma $}} & \multirow{2}{*}{$\sigma p$} & \multirow{2}{*}{$\sigma q$}  \\
	& & & &\\ 
	\multicolumn{2}{c}{r\sigma }  & \multicolumn{2}{c}{s\sigma } & a & b\\ 
	\multicolumn{2}{c}{u\sigma } & \multicolumn{2}{c}{v\sigma } & c & d
\end{pmatrix}.$$

		We need to look at $g\in E_7$ such that $\theta(g)=-g$. 		For $\theta(g)=-g$ to be possible we need that:
		\begin{itemize}
		 \item $A,B,C,D$ interchange $R$ and $S$,
		 \item $k,l,p,q$ are given by the inner product by an element of $S$,
		 \item $r,s,u,v$ annihilate $R$,
		 \item $a,b,c,d$ vanish.
		\end{itemize}
We can then check that $$g(S\oplus S\oplus \{0\} \oplus \{0\})\subseteq R\oplus R \oplus \C \oplus \C,$$ but this means that $g$ cannot be invertible as
		$$ \dim (S\oplus S\oplus \{0\} \oplus \{0\}) > \dim (R\oplus R \oplus \C \oplus \C).$$ Hence, there are no $g\in E_7$ such that $\theta(g)=-g$ and the pair is pleasant.

		$\bullet$ $(E_7,E_6+\C)$. Define a linear involution $\iota:\fB\to\fB$ by
		$$\iota(X,Y,\xi,\eta)=(-iX,iY,-i\xi,i\eta),$$ and consider the involution $\theta:E_7\to E_7$ given by
	$$		\theta(\alpha)=\iota\alpha\iota^{-1}. $$
The involution $\theta$ acts by $\pm \Id$ on all the blocks of the partition \eqref{eq:matrixE7}. We describe $\theta$ by saying the sign on each block.
	$$\theta\equiv \begin{pmatrix}[cccccc]
		\multicolumn{2}{c}{\multirow{2}{*}{$+$}} & \multicolumn{2}{c}{\multirow{2}{*}{$-$}} & \multirow{2}{*}{$+$} & \multirow{2}{*}{$-$}  \\
		& & & & \\ 
		\multicolumn{2}{c}{\multirow{2}{*}{$-$}} & \multicolumn{2}{c}{\multirow{2}{*}{$+$}} & \multirow{2}{*}{$-$} & \multirow{2}{*}{$+$}  \\
				& & & &\\ 
	    \multicolumn{2}{c}{+}  & \multicolumn{2}{c}{-} & + & -\\
	    	    \multicolumn{2}{c}{-} & \multicolumn{2}{c}{+} &- & +
	\end{pmatrix}.$$
	
We show now that the element
$$
g=\begin{pmatrix}[cccccc]
	\multicolumn{2}{c}{\multirow{2}{*}{$0$}} & \multicolumn{2}{c}{\multirow{2}{*}{$-1$}} & \multirow{2}{*}{$0$} & \multirow{2}{*}{$0$}  \\
	& & & & \\ 
	\multicolumn{2}{c}{\multirow{2}{*}{$1$}} & \multicolumn{2}{c}{\multirow{2}{*}{$0$}} & \multirow{2}{*}{$0$} & \multirow{2}{*}{$0$}  \\
	& & & &\\ 
	\multicolumn{2}{c}{0}  & \multicolumn{2}{c}{0} & 0 & -1\\
	\multicolumn{2}{c}{0} & \multicolumn{2}{c}{0} &1 & 0
\end{pmatrix}
$$
belongs to $\cA_\theta\subset E_7$ and $\Ad g\notin \Ad H$.		  Indeed, for $P=(X,Y,\xi,\eta)$ and $Q=(Z,W,\zeta,\omega)$ we have
	$$gP = (-Y,X,-\eta,\xi),\qquad gQ=(-W,Z,-\omega,\zeta).$$
	By using \eqref{eq:Phi-E7} and the properties $X\times Y=Y\times X$ and $Y\lor X = \phantom{.}^t(X\lor Y) $ for $X$, $Y\in\fJ$ (see \cite[Lem. 3.4.3]{yokota}), we have 
			$$\al P \times \al Q = \Phi(-^t\phi, -B,-A,-\nu),$$
which satisfies $\al (P\times Q) = (\al P \times \al Q) \al$. Finally, we cannot have $\Ad g= \Ad h$ for some $h\in H$, as this would imply $h=gz$ for $z\in Z(E_7)$, which is not possible as $\pm g\notin H$.

		$\bullet$ $(E_7,A_7)$. Denote by $\tau$ the conjugation on a complexification: in $\C$ is the usual conjugation, whereas in $\Oc_\C$, we have $\tau(\al + i\be)=\al-i\be$ for $\al$, $\be\in\Oc$. Denote by $\gamma$ the involution of $\Oc_\C$ given, using $\Oc_\C=\HH_\C+\HH_\C l$,  by $$\gamma(x+ y l)=x-y l,$$ where  $x$, $y\in \HH_\C$.  Define a linear involution $\tau \gamma:\fB\to\fB$ by
		$( X, Y,\xi,\eta)\mapsto (\tau \gamma X,\tau \gamma Y,\tau \xi,\tau \eta)$ and consider the involution $\theta:E_7\to E_7$ given by
		$$		\theta(\alpha)=\tau\gamma\alpha\gamma\tau. $$
		The involution $\tau\gamma:\Oc_\C\to\Oc_\C$ is given, for $a+bi$, $c+di\in \HH_\C$, by
		$$ \tau\gamma:  (a+bi)+(c+di)l  \mapsto (a-bi)-(c-di)l. $$
		Its $\pm 1$-eigenspaces are, respectively,
		$$ R:= \HH + i\HH l, \qquad S:= i\HH + \HH l.$$
		Note that $\tau a=-a\tau$ for $a\in \C$ implies $a\in i\R$. On the other hand, for $p\in\fJ$, the condition $\tau \gamma p=-p\tau\gamma$ acting on $\C$ implies $p\in S$. Moreover, $\tau\gamma A = A\tau\gamma$ if and only if $A$ interchanges $R$ and $S$. By these arguments we get that for $g$ to satisfy $\theta(g)=-g$, it is a necessary condition that, with the notation of \eqref{eq:matrixE7}, 
		\begin{itemize}
		 \item $A,B,C,D$ interchange $R$ and $S$,
		 \item $k,l,p,q\in S$,
		 \item $r,s,u,v$ are given by inner product with an element of $S$,
		 \item $a,b,c,d\in i\R$.
		\end{itemize}
        Alternatively, to have $\theta(g)=g$ we must have that $A,B,C,D$ send $R$ to $R$ and $S$ to $S$, together with $k,l,p,q\in R$, $r,s,u,v$ are given by inner product with an element of $R$, and $a,b,c,d\in \R$.
		
		We see now that the element   
$$
g=\begin{pmatrix}[cccccc]
\multicolumn{2}{c}{\multirow{2}{*}{$0$}} & \multicolumn{2}{c}{\multirow{2}{*}{$i$}} & \multirow{2}{*}{$0$} & \multirow{2}{*}{$0$}  \\
& & & & \\ 
\multicolumn{2}{c}{\multirow{2}{*}{$i$}} & \multicolumn{2}{c}{\multirow{2}{*}{$0$}} & \multirow{2}{*}{$0$} & \multirow{2}{*}{$0$}  \\
& & & &\\ 
\multicolumn{2}{c}{0}  & \multicolumn{2}{c}{0} & 0 & i\\
\multicolumn{2}{c}{0} & \multicolumn{2}{c}{0} &i & 0
\end{pmatrix}
$$
belongs to $\cA_\theta\subset E_7$  and $\Ad g\notin \Ad H$.		 Indeed, for $P=(X,Y,\xi,\eta)$ and $Q=(Z,W,\zeta,\omega)$ we have
$$\al P = (iY,iX,i\eta,i\xi),\qquad \al Q=(iW,iZ,i\omega,i\zeta).$$
By using \eqref{eq:Phi-E7} and the properties $X\times Y=Y\times X$ and $Y\lor X = \phantom{.}^t(X\lor Y) $ for $X$, $Y\in\cJ$, we have 
$$\al P \times \al Q = \Phi(-^t\phi, B,A,-\nu),$$
which satisfies $\al\circ (P\times Q) = (\al P \times \al Q)\circ \al$. Finally, we cannot have $\Ad g= \Ad h$ for some $h\in H$, as this would imply $h=gz$ for $z\in Z(E_7)$, which is not possible as $\pm g\notin H$.
\end{proof}

\subsection{Nice symmetric pairs and summary of results}
\label{sec:nice-summary}

We combine the notion of a pleasant symmetric pair with the notion of a nice symmetric pair. When the Aizenbud-Gourevitch criterion was introduced, the main tool to prove regularity was speciality \cite[Def. 7.3.4]{ag-duke}, which was in turn proved by looking at the so-called negative distinguished defect (see \cite[Sec. 5]{aizenbud-13} for more details). Being of negative distinguished defect is equivalent to being a nice symmetric pair as studied by Sekiguchi (although so named in \cite{levasseur-stafford}). Niceness depends only on the symmetric Lie algebra pair and Sekiguchi classified all such pairs.

\begin{lemma}[\cite{sekiguchi}]
	The nice simple symmetric pairs are exactly:
	\begin{align*}
	&(A_{2n},B_n),&& (A_{2n-1},D_n), &&(A_{2r+1},A_r+A_r+\C),&&(C_{n},A_{n-1}+\C),\\
	&(B_{2r},B_r+D_r),&& (B_{2r+1},D_{r+1}+B_r),&&(D_{2r+1},D_{r}+D_{r+1}),  && (D_{2r},D_{r}+D_{r}),\\ & (D_{2r+1},B_{r}+B_{r}), && (D_{2r+2},B_{r}+B_{r+1}),&&(G_2,A_1+A_1),&&(F_4,C_3+A_1),\\ & (E_{6},C_4),
	&&(E_6,A_5+A_1),&&(E_{7},A_7),&&(E_8,D_8).
	\end{align*}
\end{lemma}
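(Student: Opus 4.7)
The plan is to translate Sekiguchi's definition of niceness (equivalently, negative distinguished defect in the sense of \cite{aizenbud-13}) into a numerical condition on the symmetric Lie algebra triple $(\fg,\fh,\theta)$ that can be read off the Satake diagram and root-multiplicity data, and then carry out a case-by-case verification using the classification of complex symmetric pairs in Table~\ref{tab:Satake}.

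First, fix a Cartan subspace $\fa\subseteq\fp$, write down the restricted root system $\Sigma=\Sigma(\fg,\fa)$ together with its multiplicities $m_\alpha = \dim \fg_\alpha$, and express niceness as a concrete numerical condition on the pair $(\Sigma,\{m_\alpha\})$: for instance, as the non-existence of a distinguished nilpotent orbit of a specific kind in $\fp$, or equivalently as an inequality between $\dim\fp$ and a weighted sum of root-space dimensions along a distinguished $\fsl_2$-triple. Since both the restricted root system and the multiplicities are tabulated for every complex symmetric pair (Araki's tables), the condition reduces in principle to reading off these tables and performing an arithmetic check, and the fact that niceness depends only on $(\fg,\fh,\theta)$ allows the reduction to Table~\ref{tab:Satake}.

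Second, for the classical entries on the list I would argue via explicit matrix realizations along the lines of Section~\ref{sec:classical-pleasant}. For $(A_{2n},B_n)$ and $(A_{2n-1},D_n)$ the involution is $g\mapsto(g^\intercal)^{-1}$ and $\fp$ is the space of traceless symmetric matrices; for $(C_n,A_{n-1}+\C)$ one uses $\theta=\Ad I_{n,n}$; and for each orthogonal pair $(\fso_{p+q},\fso_p\oplus\fso_q)$ one uses $\theta=\Ad I_{p,q}$. In every case a Cartan subspace is visible, the restricted root data can be computed directly, and niceness becomes a small number-theoretic condition on the block sizes. The delicate cases are the orthogonal pairs where the two summands have comparable rank, since this is precisely where the nice pairs of type $D$ appear and where the list differs from the list of pleasant pairs; one must carefully compare the parities of $r$ and $s$.

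Third, for the six exceptional entries $(G_2,A_1+A_1)$, $(F_4,C_3+A_1)$, $(E_6,C_4)$, $(E_6,A_5+A_1)$, $(E_7,A_7)$ and $(E_8,D_8)$, I would use the concrete realizations of $G_2,F_4,E_6,E_7,E_8$ from Section~\ref{sec:exceptional-lie-groups} (or equivalently Kac coordinates on the extended Dynkin diagrams) to write down each involution, compute the restricted roots together with their multiplicities, and verify the numerical niceness condition. The remaining exceptional pairs --- namely $(F_4,B_4)$, $(E_6,F_4)$, $(E_6,D_5+\C)$, $(E_7,D_6+A_1)$, $(E_7,E_6+\C)$ and $(E_8,E_7+A_1)$ --- must then be shown to \emph{fail} niceness. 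This negative half is the main obstacle: distinguishing nice from non-nice among pairs sharing the same $G$ requires an honest computation of the defect invariant, since the distinction is not visible as a combinatorial pattern on the Satake diagram alone.
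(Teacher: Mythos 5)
The paper does not prove this lemma at all: it is quoted verbatim as Sekiguchi's classification theorem, with the only ``proof'' being the citation to \cite{sekiguchi} (and the surrounding remark that niceness, equivalently negative distinguished defect in the sense of \cite{aizenbud-13} and \cite{levasseur-stafford}, depends only on the symmetric Lie algebra triple). So there is no argument in the paper for you to be compared against; you are attempting to reprove a known classification from scratch.

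As a reproof, your proposal has a genuine gap at its foundation. The entire strategy rests on ``translating niceness into a numerical condition'' on the restricted root system and multiplicities, but you never actually state that condition: you offer two hedged candidates (``non-existence of a distinguished nilpotent orbit of a specific kind'' versus ``an inequality between $\dim\fp$ and a weighted sum of root-space dimensions'') without fixing either one or proving they are equivalent to Sekiguchi's definition. Sekiguchi's notion is a condition on \emph{every} nilpotent $e\in\fp$ lying in a normal $\fsl_2$-triple, not a single inequality attached to the pair, so the reduction to ``reading off Araki's tables and performing an arithmetic check'' is precisely the nontrivial content of the theorem and cannot be waved through. Compounding this, you concede that the negative half --- showing that $(F_4,B_4)$, $(E_6,F_4)$, $(E_6,D_5+\C)$, $(E_7,D_6+A_1)$, $(E_7,E_6+\C)$, $(E_8,E_7+A_1)$, and the various non-nice classical families fail the condition --- is ``the main obstacle'' and you do not carry it out; but without it the word ``exactly'' in the statement is unproven. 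If you want to establish this lemma rather than cite it, you must first pin down the defect invariant precisely (orbit by orbit) and then execute both inclusions of the classification; as written, neither direction is actually demonstrated for a single pair.
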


\begin{lemma}[\cite{aizenbud-13}, Cor. 5.2.7]
	Nice symmetric pairs are regular.
\end{lemma}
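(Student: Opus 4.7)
The plan is to establish the chain of implications: nice $\Rightarrow$ negative distinguished defect $\Rightarrow$ special $\Rightarrow$ regular, which is the route indicated in the paragraph preceding the lemma.

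First, I would appeal to Sekiguchi's classification together with the observation (attributed to Sekiguchi and further developed in \cite{levasseur-stafford}) that a symmetric pair is nice in the sense of the list above if and only if it has negative distinguished defect. This is essentially a translation of definitions, checked on the classification: for each pair in the list one computes the distinguished defect and verifies its sign; this is the role of Sekiguchi's classification theorem cited above. No further work is required for this step beyond bookkeeping.

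Second, and this is the technical core, I would invoke the Aizenbud--Gourevitch notion of speciality (\cite[Def.~7.3.4]{ag-duke}) and argue that negative distinguished defect forces speciality. The idea is that the distinguished defect controls which nilpotent $H$-orbits in $\mathfrak{p}$ can support distributions that are invariant under $H$ but violate the required equivariance under admissible elements; when the defect is negative, a Fourier-theoretic/weight argument (the defect is an eigenvalue-count for the grading element associated to an $\mathfrak{sl}_2$-triple attached to a nilpotent orbit) rules out all such distributions. I expect this to be the main obstacle to writing down cleanly, since it requires unwinding the precise definitions of defect and distinguished nilpotent orbits and then matching them against the distributional criterion; however, the argument is precisely what \cite[Sec.~5]{aizenbud-13} carries out, so I would follow that structure.

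Third, I would conclude by the implication speciality $\Rightarrow$ regularity, which is \cite[Prop.~7.3.7]{ag-duke} (or the cited corollary). This step is formal from the definition of regularity recalled above: on $\mathcal{R}\mathfrak{p}$ the Harish-Chandra descent to descendants already gives $\Ad g$-invariance of any $H$-invariant distribution, and speciality extends this invariance across the nilpotent cone to all of $\mathfrak{p}$, which is precisely the inclusion \eqref{eq:regularity} required by Definition \ref{def:regular}. Combining these three steps yields the lemma, and the whole argument is essentially a packaging of \cite[Cor.~5.2.7]{aizenbud-13}, so in the write-up I would simply record the chain and refer the reader to the relevant statements for each link.
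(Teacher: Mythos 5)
Your proposal is correct and follows exactly the route the paper itself indicates: the paper gives no independent proof of this lemma, but simply cites \cite[Cor.~5.2.7]{aizenbud-13}, and the chain nice $\Leftrightarrow$ negative distinguished defect $\Rightarrow$ special $\Rightarrow$ regular that you describe is precisely the argument summarized in the paragraph preceding the lemma. Nothing further is needed.
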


%

By combining these two lemmas with the results in Sections \ref{sec:classical-pleasant} and \ref{sec:spin-pairs} and Theorem \ref{theo:exceptional-pleasant}, we have Tables \ref{tab:pleasant-nice},  \ref{tab:pleasant-nice2} and \ref{tab:pleasant-nice3}. In the column ``pleasant'', \xmark\, means that only the adjoint form is pleasant, \cmark\, that the simply connected group is pleasant, and if a group appears, it is the group such that all its quotients, including itself, are pleasant. The notation $\SL/\Z_{2^{max}}$ means killing the maximal even part of the centre.

\begin{table}[!htpb]
	\vspace{0cm}
	\begin{center}
		\setlength{\tabcolsep}{2pt}
		\begin{tabular}{cc|c|c}
 & & pleasant & nice \\ \hline \hline
 $(A_{2n},B_n)$ &  & \cmark  & \cmark \\ \hline
 $(A_{2n-1},D_n)$ & & \footnotesize $\SL/\Z_{2^{max}}$ &  \cmark\\ \hline 
 $(A_{2n-1},C_n)$ & &  \cmark  &  \xmark \\ \hline 
 \multirow{2}{*}{$(A_{r+s+1},A_r+A_s+\C)$} & \footnotesize $r\neq s$ & \cmark  & \xmark \\
 & \footnotesize  $r=s$ & \footnotesize $\SL/\Z_{2^{max}}$   & \cmark \\ \hline
 $(C_{n},A_{n-1}+\C)$ & & \xmark & \cmark \\ \hline 
  \multirow{2}{*}{$(C_{r+s},C_r+C_s)$} & \footnotesize $r\neq s$ & \cmark & \xmark \\
 & \footnotesize  $r=s$ &  \xmark  & \xmark \\ \hline
\end{tabular}\qquad 
\end{center}
\vspace{2pt}
\caption{Pleasant and nice linear and symplectic pairs.}\label{tab:pleasant-nice}
\vspace{0cm}
\end{table}

\begin{table}[!htpb]
		\begin{center}
		\setlength{\tabcolsep}{2pt}
\begin{tabular}{cc|c|c}
	 & & pleasant & nice \\ \hline \hline
	\multirow{3}{*}{$(B_{r+s},B_r+D_s)$} & \footnotesize $r\neq s-1,s$ & \footnotesize $\SO$ & \xmark \\
		& \footnotesize  $r=s-1$ & \footnotesize $\SO$  & \cmark \\ 
	& \footnotesize  $r=s$ & \xmark & \cmark \\ \hline
			$D_{2q}$ & & & \\ 
	\multirow{3}{*}{\parbox{100pt}{\center $(D_{r+s},D_{r}+D_s)$\\ $(D_{r+s+1},B_{r}+B_s)$}} & \footnotesize $r\neq s-1,s$ & \footnotesize$\SO$ & \xmark \\
	& \footnotesize  $r=s-1$ & \footnotesize $\SO$  & \cmark \\ 
	& \footnotesize  $r=s$ & \xmark & \cmark \\ \hline
	 $D_{2q+1}$ & & & \\ 
	\multirow{3}{*}{\parbox{100pt}{\center $(D_{r+s},D_{r}+D_s)$}} & \footnotesize $r\neq s-1,s$ &  \footnotesize $\SO$ & \xmark \\
	& \footnotesize  $r=s-1$ &  \footnotesize $\SO$  & \cmark \\ 
	& \footnotesize  $r=s$ & \xmark & \cmark \\ \hline
	 $D_{2q+1}$ & & & \\ 
	\multirow{3}{*}{\parbox{100pt}{\center  $(D_{r+s+1},B_{r}+B_s)$}} & \footnotesize $r\neq s-1,s$ & \cmark & \xmark \\
	& \footnotesize  $r=s-1$ &  \cmark  & \cmark \\ 
	& \footnotesize  $r=s$ & \xmark & \cmark \\ \hline
	$(D_r,A_{r-1}+\C)$ &  & \xmark  & \xmark \\ \hline
\end{tabular}\qquad 
\end{center}
\vspace{2pt}
\caption{Pleasant and nice orthogonal and Spin pairs.}\label{tab:pleasant-nice2}
\vspace{0cm}
\end{table}

\begin{table}[!htpb]
	\begin{center}
		\setlength{\tabcolsep}{2pt}
\begin{tabular}{c|c|c}
	 & pleasant & nice   \\ \hline \hline
	$(G_2,A_1+A_1)$  & \cmark  & \cmark \\ \hline
	$(F_4,B_4)$  &\cmark  &  \xmark\\ 
	$(F_4,C_3+A_1)$ &\cmark  & \cmark \\ \hline 
	$(E_{6},C_4)$ & \cmark & \cmark \\ 
	$(E_6,A_5+A_1)$ &  \cmark & \cmark \\ 
	$(E_6,F_4)$ & \cmark  & \xmark \\
	$(E_6, D_5+\C)$ & \cmark   & \xmark \\ \hline
	$(E_{7},A_7)$ & \xmark  & \cmark \\
	$(E_7,D_6+A_1)$& \cmark  & \xmark \\ 
	$(E_7,E_6+\C)$ & \xmark  & \xmark \\ \hline
	$(E_8,D_8)$ & \cmark   & \cmark \\ 
	$(E_8,E_7+A_1)$ &  \cmark  & \xmark \\ \hline
\end{tabular}
\end{center}
\vspace{0pt}
\caption{Pleasant and nice exceptional pairs.}\label{tab:pleasant-nice3}
\end{table}

Note that this does not mean that $(C_{2r},C_r+C_r)$, $(D_r,A_{r-1})$,  $(\Spin_{m+n},\Spin_m\times_{\Z_2} \Spin_n))$ with $|m-n|>2$, apart from $m+n=4q+2$ with $m,n$ odd, and $(E_7,E_6+\C)$ are not regular. It says, though, that other techniques are needed to possibly show their regularity.

%
%
%
%
%

\section{Descendants of complex symmetric pairs}
\label{sec:visual}

Descendants of symmetric pairs are centralizers (in the sense of Definition \ref{def:descendant}) of semisimple elements  
$$ x\in P:= \{ g\theta(g)^{-1} \st g\in G \}. $$
The aim of this section is to describe $(G_x,H_x,\theta_{|G_x})$ by means of the Satake diagram of $(G,H,\theta)$. We start with some facts about centralizers of semisimple elements.

\subsection{Centralizers of semisimple elements and extended Dynkin diagrams}
\label{sec:centralizers}

The results of this section are valid for algebraically closed fields, but again we use $\C$. The following proposition follows from \cite[Ch. II]{springer-steinberg} (see  also \cite[Ch. 2]{humphreys}).

\begin{proposition}
	Let $x$ be a semisimple element of a connected  semisimple group $G$. Take any maximal torus $T$ containing $x$ and consider the root system $\Delta(G,T)$ with root groups $U_\al$. Then, the centralizer $G_x$ is the subgroup generated by $T$, those root groups $U_\al$ for which $\al(x)=1$, and the elements $n_w\in N(T)$ from a choice of Weyl group representatives commuting with $x$. The centralizer $G_x$ is therefore reductive but not necessarily connected. If $G$ is simply connected, the centralizer $G_x$ is moreover connected. 
\end{proposition}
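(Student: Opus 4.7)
The plan is to verify the assertions in sequence: the infinitesimal structure, the identity component, the component group, reductivity, and finally Steinberg's connectedness in the simply connected case. Since this is essentially a recalled result from \cite{springer-steinberg}, the proof is mostly a structured reference.

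First, since $x\in T$ is semisimple, $\Ad(x)$ acts diagonally on the root-space decomposition $\fg = \ft \oplus \bigoplus_{\al\in\Delta}\fg_\al$: trivially on $\ft$ and by the scalar $\al(x)$ on $\fg_\al$. Setting $\Delta_x := \{\al\in\Delta : \al(x)=1\}$, this yields
\[
\fg_x := \Lie(G_x) = \ft \oplus \bigoplus_{\al\in\Delta_x}\fg_\al,
\]
and $\Delta_x$ is a closed subroot system of $\Delta$, so $\fg_x$ is reductive with $\ft$ as a Cartan subalgebra.

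Second, let $L\subseteq G_x$ be the subgroup generated by $T$ and the root groups $U_\al$ with $\al\in\Delta_x$. It is connected (generated by connected subgroups) and its Lie algebra contains $\ft\oplus\bigoplus_{\al\in\Delta_x}\fg_\al = \fg_x = \Lie((G_x)^\circ)$, so $L=(G_x)^\circ$. The reductivity of $G_x$ then follows because its identity component is reductive and the component group is finite.

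Third, I would analyze $G_x/(G_x)^\circ$. Any $g\in G_x$ normalizes $(G_x)^\circ$ and so permutes its maximal tori; since all such tori are $(G_x)^\circ$-conjugate, after multiplying $g$ by a suitable element of $(G_x)^\circ$ we may assume $g\in N_G(T)$. Its image $w\in W := N_G(T)/T$ then automatically stabilizes $x$, and conversely any representative $n_w$ of an element of $W_x := \mathrm{Stab}_W(x)$ lies in $G_x$. This gives the described generating set and identifies $G_x/(G_x)^\circ$ with a subgroup of $W_x$.

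Finally, for $G$ simply connected I would invoke Steinberg's connectedness theorem. The heart of the argument is that $W_x$ is generated by the reflections $s_\al$ with $\al\in\Delta_x$, and each such $s_\al$ has a representative in the rank-one subgroup $\langle U_\al, U_{-\al}\rangle \subseteq (G_x)^\circ$, so all of $W_x$ is realized within $(G_x)^\circ$ and $G_x = (G_x)^\circ$. The main obstacle is precisely this step: the generation of $W_x$ by reflections through roots vanishing on $x$ is a subtle lattice-theoretic fact that requires the cocharacter lattice of $T$ to coincide with the coroot lattice, which characterizes the simply connected case. It fails in general (for instance the element $\mathrm{diag}(1,\zeta,\zeta^2)\in\PGL_3$ for $\zeta$ a primitive cube root of unity has disconnected centralizer), so one genuinely cannot avoid this input.
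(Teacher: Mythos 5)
The paper offers no proof of this proposition at all: it is recalled verbatim from Springer--Steinberg (and Humphreys) with a citation, so there is no internal argument to compare yours against. Your sketch is the standard proof from that reference and is essentially correct: the root-space computation of $\Lie(G_x)$, the identification of $\langle T, U_\al : \al(x)=1\rangle$ with $(G_x)^\circ$ via equality of Lie algebras, the reduction of the component group to $N_G(T)\cap G_x$ by conjugacy of maximal tori in $(G_x)^\circ$, and the appeal to Steinberg's connectedness theorem in the simply connected case are all in order, and your $\PGL_3$ example correctly shows the last hypothesis cannot be dropped. Two small remarks. First, $G_x/(G_x)^\circ$ is naturally a \emph{quotient} $W_x/W(\Delta_x)$ of the stabilizer $W_x=\Stab_W(x)$ by the reflection subgroup generated by $\{s_\al : \al\in\Delta_x\}$, not a subgroup of $W_x$; this does not affect your logic (you only use that every class in $W_x$ admits a representative in $G_x$, and in the final step that every class is represented inside $(G_x)^\circ$ once $W_x=W(\Delta_x)$), but the wording should be corrected. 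Second, the key lattice-theoretic fact that $W_x$ is generated by the reflections it contains when $X_*(T)$ equals the coroot lattice is invoked rather than proved (it follows from the corresponding statement for point stabilizers in the affine Weyl group); you identify it correctly as the crux and the obstruction in the non-simply-connected case, which is an acceptable level of detail for a result the paper itself only cites.
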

 
Consequently, as $G_x$ and $G$ share a maximal torus $T$, we have an inclusion
$$\Delta_x:=\Delta(G_x,T)\subseteq \Delta,$$ 
where $\Delta$ denotes $\Delta(G,T)$. It is not always possible to choose a set of simple roots $\Pi_x\subset  \Delta_x$ extending to a set of simple roots for $\Delta$, but it is indeed possible to extend $\Pi_x$ to $\Delta\cup \{ \gamma\}$, where $\gamma$ is the lowest root for the ordering given by $\Delta$ (see Proposition \ref{prop:subdiagram} for the precise statement). This result can be read off from \cite[\S 14.1]{gorenstein-lyons}, which in turn relies on the statement of Problem \S 4-4d) in \cite[Ch. VI]{bourbaki456}, so it is a better idea to show a direct argument here, as we will have to build on it in Section \ref{sec:visual-symmetric}.


We shall use the affine Weyl group, the Weyl alcoves and their properties (see \cite[Sec. VI.\S 2]{bourbaki456} for more details and proofs). The affine Weyl group 
$$ W_a(\Delta):= W(\Delta) \ltimes Tr(\Delta^\vee)$$ is the semidirect product of the Weyl group $W(\Delta)$ and the group $Tr(\Delta^\vee)$  generated by translations by coroots $\alpha^\vee=2\frac{\al}{\la \al,\al \ra}$ for any $\al\in\Delta$. For the vector space $\R\Delta$  with inner product $\la\, ,\, \ra$, the Weyl alcoves are the connected components in $\R\Delta$ of the complement of the hyperplanes, for $\alpha \in \Delta$ and $k\in \Z$, 
$$ L_{\alpha,k}= \{ v\in \R\Delta \st \la v,\alpha\ra = k \},$$ 
whereas the Weyl chambers are the connected components in $\R\Delta$ of the complement of the hyperplanes $L_{\alpha,0}$ for $\alpha\in\Delta$.

\begin{lemma}\label{lemma:simply-transitive}
 The affine Weyl group permutes the hyperplanes $L_{\alpha,k}$ and acts simply transitively on the Weyl alcoves, whereas the Weyl group permutes the hyperplanes $L_{\alpha,0}$ and acts simply transitively on the Weyl chambers.
\end{lemma}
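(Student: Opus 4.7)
The plan is to dispatch both statements in parallel, handling the finite Weyl group case first as a template and then adapting to the affine setting. The permutation statements are immediate verifications on generators: a simple reflection $s_\beta\in W(\Delta)$ sends $L_{\alpha,0}$ to $L_{s_\beta(\alpha),0}$, which stays in the family because $s_\beta(\alpha)\in\Delta$. For $W_a(\Delta)$ the reflections $s_\beta$ act the same way on each $L_{\alpha,k}$, and a translation by a coroot $\beta^\vee$ sends $L_{\alpha,k}$ to $L_{\alpha,k+\langle\beta^\vee,\alpha\rangle}$; the key point is that $\langle\beta^\vee,\alpha\rangle\in\Z$ by the defining integrality property of root systems, so the image is again a hyperplane of the prescribed form.

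For transitivity on Weyl chambers, the classical ``nearest-point'' argument suffices: fix a point $x_0$ in a fundamental chamber $C_0$ (determined by a choice of positive roots) and, given any other chamber $C$ with interior point $x$, pick $w\in W(\Delta)$ minimising $\|w\cdot x-x_0\|$, whose existence is guaranteed by finiteness of $W$. If $w\cdot x$ and $x_0$ lay on opposite sides of some wall $L_{\alpha,0}$, applying $s_\alpha$ would strictly decrease the distance, contradicting minimality; hence $w\cdot C=C_0$. For alcoves under $W_a$, the same strategy works provided one knows the $W_a$-orbit of a point is closed and discrete, which follows from the orbit being a union of finitely many translates of the coroot lattice $\Z\Delta^\vee$-orbit.

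For simple transitivity, the plan is to show the stabiliser of a chamber (resp. alcove) is trivial. For $W(\Delta)$, an element fixing the interior of $C_0$ must act trivially on $\R\Delta$ because $C_0$ has non-empty interior. For $W_a$, an element stabilising a fundamental alcove $A_0$ setwise actually fixes $A_0$ pointwise: otherwise, by the alcove-walk presentation, it would have to be a product of reflections in walls of $A_0$, and an induction on length shows any such non-trivial product moves $A_0$ off of itself. Any element fixing $A_0$ pointwise is again forced to be the identity since $A_0$ is open in $\R\Delta$.

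This is essentially Bourbaki material, so the main obstacle is really just one of exposition rather than substance; the cleanest approach is to outline the nearest-point argument and invoke \cite[Ch.\ VI, \S2]{bourbaki456} for the detailed verifications. The one subtle point worth highlighting for this paper's purposes is the coroot-integrality calculation, because this is what ensures the affine Weyl group acts on the full arrangement $\{L_{\alpha,k}\}$ and underlies the subsequent use of extended Dynkin diagrams in Proposition~\ref{prop:subdiagram}.
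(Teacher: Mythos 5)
The paper offers no proof of this lemma at all: it is stated as recalled background with a pointer to Bourbaki, Ch.~VI, \S 2 (the reference \cite{bourbaki456} you also invoke), so your sketch of the standard argument there --- the generator check for the permutation statements, the nearest-point argument for transitivity (with the correct observation that discreteness of the $W_a$-orbit is what makes the minimum attained), and the stabiliser analysis for simplicity --- is consistent with the paper's treatment and correct in outline. The only expository looseness is in the finite case of simple transitivity, where triviality of the \emph{setwise} stabiliser of a chamber is what is needed (usually via the length function: $wC_0=C_0$ forces $w(\Delta^+)=\Delta^+$, hence $w=1$), not merely of the pointwise stabiliser; but you supply exactly this kind of argument in the affine case and defer details to Bourbaki, so this does not constitute a gap.
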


We consider the closure of the union of the Weyl alcoves whose closure contains zero, 
\begin{equation}
A_0:=\{ v\in\R\Delta \st \la v,\alpha\ra \leq 1 \textup{ for all } \al \in \Delta \}. 
\end{equation}
The results that we shall use are a consequence of Lemma \ref{lemma:simply-transitive}.

\begin{lemma}\label{lemma:translation-in-Wa}
 For $v\in \R\Delta$, there is $t\in Tr(\Delta^\vee)$ such that $v+t\in A_0$.
\end{lemma}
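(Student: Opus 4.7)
The plan is to exploit the simply transitive action of $W_a(\Delta)$ on alcoves (Lemma~\ref{lemma:simply-transitive}) together with the semidirect decomposition $W_a(\Delta)=W(\Delta)\ltimes Tr(\Delta^\vee)$, so that the rotational component of the affine Weyl element that brings $v$ into the fundamental alcove can be reabsorbed, leaving only a translation by a coroot.

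First I would fix the fundamental alcove $C_0\subset\R\Delta$, namely the Weyl alcove whose closure contains $0$ and which lies in a chosen fundamental Weyl chamber, so that $\overline{C_0}$ is cut out by $\la v,\al_i\ra\geq 0$ for the simple roots $\al_i$ and $\la v,\tilde\al\ra\leq 1$ for the highest root $\tilde\al$. Two preparatory observations would then be recorded. The first is $\overline{C_0}\subseteq A_0$: for any positive root $\al$, the difference $\tilde\al-\al$ is a nonnegative integer combination of simple roots, giving $\la v,\al\ra\leq\la v,\tilde\al\ra\leq 1$ on $\overline{C_0}$, and negative roots are handled symmetrically since such $v$ lies in the fundamental Weyl chamber. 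The second is that $A_0$ is $W(\Delta)$-invariant, since $\la wv,\al\ra=\la v,w^{-1}\al\ra$ and $w^{-1}\al\in\Delta$; as $W(\Delta)$ permutes the coroots, it also stabilizes $Tr(\Delta^\vee)\subset\R\Delta$ as a subgroup.

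With these in hand, given $v\in\R\Delta$ I would invoke Lemma~\ref{lemma:simply-transitive} to produce $w_a\in W_a(\Delta)$ with $w_a(v)\in\overline{C_0}$, and decompose $w_a=t'\circ w$ with $w\in W(\Delta)$ and $t'\in Tr(\Delta^\vee)$, so that $w_a(x)=w(x)+t'$ for every $x\in\R\Delta$. Setting $t:=w^{-1}(t')$, which still lies in $Tr(\Delta^\vee)$ by the stability noted above, a short manipulation gives
$$v+t \;=\; w^{-1}\bigl(w(v)+t'\bigr)\;=\;w^{-1}\bigl(w_a(v)\bigr)\;\in\; w^{-1}\,\overline{C_0}\;\subseteq\; A_0,$$
which is the desired conclusion. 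I do not foresee any real obstacle; the only point demanding care is the bookkeeping of the semidirect product so that the rotational factor $w$ can indeed be reabsorbed into a new translation by a coroot, which is exactly what the $W(\Delta)$-invariance of the coroot lattice permits.
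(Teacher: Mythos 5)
Your proof is correct, and it takes exactly the route the paper intends: the paper gives no separate argument for this lemma, merely asserting it as a consequence of Lemma \ref{lemma:simply-transitive}, and your write-up supplies precisely that derivation (decompose the affine Weyl element bringing $v$ into the closed fundamental alcove as translation composed with a finite Weyl element, then reabsorb the rotational part using the $W(\Delta)$-invariance of $A_0$ and of the coroot lattice). The two preparatory observations $\overline{C_0}\subseteq A_0$ and the $W(\Delta)$-stability of $A_0$ are both verified correctly, so there is nothing to add.
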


\begin{lemma}\label{lemma:Weyl-chamber-alcove}
 Inclusion gives a bijective correspondence between Weyl chambers and Weyl alcoves contained in $A_0$. The boundary of a Weyl alcove is contained in the boundary of the corresponding Weyl chamber and the hyperplane $L_{-\gamma,1}$ where $-\gamma$ is the highest root with respect to the order determined by the Weyl chamber.
\end{lemma}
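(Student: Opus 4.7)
The plan is to associate to each Weyl chamber $C$ a unique alcove $A_C\subseteq A_0$ by intersecting $C$ with the single additional half-space cut out by the highest root for $C$, and then verify that this assignment gives the claimed bijection.

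First I would fix a Weyl chamber $C$ with simple roots $\Pi$ and positive roots $\Delta^+$, and let $-\gamma$ denote the highest root for the ordering determined by $C$. The crucial reduction is that on $\bar C$ the full family of inequalities $\langle v,\alpha\rangle\le 1$ for $\alpha\in\Delta$ collapses to the single inequality $\langle v,-\gamma\rangle\le 1$. Indeed, for $\alpha\in\Delta^-$ the inequality is automatic since $\langle v,\alpha\rangle\le 0$, while for $\alpha\in\Delta^+$ the difference $-\gamma-\alpha$ is a non-negative integer combination of simple roots, and $v\in\bar C$ pairs non-negatively with each simple root, yielding $\langle v,-\gamma\rangle\ge\langle v,\alpha\rangle$. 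Hence $\bar C\cap A_0=\{v\in\bar C:\langle v,-\gamma\rangle\le 1\}$, a non-empty convex polytope.

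Next I would check that $A_C:=\{v\in C:\langle v,-\gamma\rangle<1\}$ is precisely one alcove. It is convex, hence connected, and for $v\in A_C$ any root $\alpha$ satisfies $\langle v,\alpha\rangle\in(-1,0)\cup(0,1)$ by the previous paragraph, so no hyperplane $L_{\alpha,k}$ meets $A_C$. Therefore $A_C$ lies in some alcove $A'$, which must itself lie in $C$ (it cannot cross any $L_{\alpha,0}$) and cannot cross $L_{-\gamma,1}$, so $A'=A_C$. Conversely, any alcove $A\subseteq A_0$ is connected and avoids every $L_{\alpha,0}$, hence is contained in a unique Weyl chamber $C$; being contained in $A_0$ then forces $A=A_C$, establishing the bijection.

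For the final clause on boundaries, I would read off which of the defining inequalities of $A_C$ can be saturated: either $\langle v,\beta\rangle=0$ for some $\beta\in\Pi$, in which case $v$ lies on a wall of $\bar C$ and hence in $\partial C$, or $\langle v,-\gamma\rangle=1$, in which case $v\in L_{-\gamma,1}$. The main obstacle is the initial reduction, which rests on the dominance property $-\gamma-\alpha\in\sum_{\beta\in\Pi}\Z_{\ge 0}\,\beta$ for all $\alpha\in\Delta^+$ characterizing the highest root; once this is in hand, the rest of the argument is essentially an unwinding of the definitions, combined with the fact from Lemma~\ref{lemma:simply-transitive} that the Weyl group acts simply transitively on Weyl chambers, ensuring the correspondence is well-defined without redundancy.
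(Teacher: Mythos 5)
Your argument is correct, and since the paper offers no proof of this lemma (it defers to Bourbaki, Ch.\ VI \S 2, treating it as a standard consequence of the theory of affine Weyl groups), your direct verification via the dominance property of the highest root is exactly the classical argument being invoked. The only implicit hypothesis worth making explicit is irreducibility of $\Delta$ (so that a single highest root $-\gamma$ dominates all positive roots), which holds here because the paper works with simple $G$.
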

{\noindent We include an example for the root system $B_2$ in Figure \ref{fig:Weyl-alcoves} for the sake of clarity.}

\begin{figure}[h] 
\begin{tikzpicture}[>=latex]
\foreach \k in {-2,...,2}{
    \draw[blue,dashed] (-2,\k) -- (2,\k);
    \draw[blue,dashed] (\k,-2) -- (\k,2);
    }

\foreach \j in {-2,2} {
    \foreach \k in {-2,...,1}{
    \draw[blue,dashed] (\j,\k) -- (-\k,-\j);
    \draw[blue,dashed] (\j,\k) -- (\k,\j);
}
}  
\draw[blue,dashed] (0,-2) -- (0,-2.5);  
\draw[blue,dashed] (0,2) -- (0,2.5);  
\draw[blue,dashed] (2,2) -- (2.4,2.4);  
\draw[blue,dashed] (-2,-2) -- (-2.4,-2.4);
\draw[blue,dashed] (2,-1) -- (2.5,-1.5);  
\draw[blue,dashed] (-1,2) -- (-1.5,2.5);
\fill[blue!25] (1,0) -- (0,1) -- (-1,0) -- (0,-1) -- cycle;
\fill[blue!75] (0,0) -- (0,1) -- (.5,.5) -- cycle;
\draw[->] (0,0) -- (1,0);
\draw[->] (0,0) -- (1,1);
\draw[->] (0,0) -- (0,1);
\draw[->] (0,0) -- (-1,1);
\draw[->] (0,0) -- (-1,0);
\draw[->] (0,0) -- (-1,-1);
\draw[->] (0,0) -- (0,-1);
\draw[->] (0,0) -- (1,-1);
\node at (0.55,-0.15) {\small $\alpha_1$};
\node at (-0.5,0.8) {\small $\alpha_2$};
\node at (0.45,2.25) {\small $L_{\alpha_1,0}$};
\node at (2.6,2.05) {\small $L_{\alpha_2,0}$};
\node at (2.9,-1.1) {\small $L_{2\alpha_1+\alpha_2,1}$};
\end{tikzpicture}
\caption{Weyl alcoves for $B_2$, with $A_0$ shaded and the alcove given by the set $\{\al_1,\al_2\}$, with highest root  $2\al_1+\al_2$, shaded darker.}
\label{fig:Weyl-alcoves}
\end{figure}
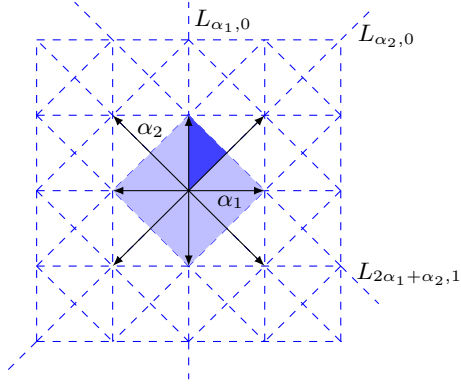

The statement and the proof of the following result will be important in what follows. Note that a set of simple roots $\Pi_x$ for $\Delta_x$ must satisfy, for  $\alpha,\beta\in \Pi_x$, that $\alpha-\beta\not\in \Pi_x$, and the set $\Pi\cup \{\gamma\}$ is maximal with this property.

\begin{proposition}\label{prop:subdiagram}
	Let $x$ be a semisimple element of $G$. There exists an ordering of $\Delta$ with corresponding set of simple roots $\Pi\subset \Delta$ and lowest root $\gamma$, such that
\begin{equation}\label{eq:Pix}
\Pi_x := \Delta_x \cap ( \Pi\cup \{\gamma\} ) 
\end{equation}
	is a set of simple roots for $\Delta_x$.
\end{proposition}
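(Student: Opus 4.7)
The plan is to associate to $x$ a vector $v\in V:=\R\Delta^\vee$ such that $\alpha(x)=1$ iff $\la v,\alpha\ra\in\Z$, and then apply the affine-alcove combinatorics already developed. After a standard reduction—replacing $x$ by its unitary factor and working inside the Levi subgroup centralizing the hyperbolic factor, both coming from the polar decomposition of $x$—I would write $x=\exp(2\pi i v)$, so that $\Delta_x$ consists exactly of the roots $\alpha$ for which $v$ lies on some hyperplane $L_{\alpha,k}$, $k\in\Z$. By Lemma \ref{lemma:translation-in-Wa}, translating $v$ by a coroot (which leaves $x$ unchanged) then lets me arrange $v\in A_0$.

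The next step is to select the alcove. By Lemma \ref{lemma:Weyl-chamber-alcove}, $v$ lies in the closure of a Weyl alcove $A\subset A_0$ contained in a unique Weyl chamber $C$. I take $\Pi$ to be the simple roots attached to $C$ and $\gamma$ the lowest root, so the walls of $A$ are $L_{\alpha_i,0}$ for $\alpha_i\in\Pi$ together with $L_{-\gamma,1}$. For $v\in\bar A$ one has $\la v,\alpha_i\ra\geq 0$ and $\la v,-\gamma\ra\leq 1$, and the height-maximality of $-\gamma$ ensures that $-\gamma-\alpha$ is a nonnegative integer combination of $\Pi$ for every positive root $\alpha$, whence $0\leq \la v,\alpha\ra\leq \la v,-\gamma\ra\leq 1$. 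Therefore a positive root $\alpha$ belongs to $\Delta_x$ exactly when $\la v,\alpha\ra\in\{0,1\}$.

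Finally, I would verify that $\Pi_x:=\Delta_x\cap(\Pi\cup\{\gamma\})$ is a set of simple roots for $\Delta_x$. Setting $S_0:=\{\alpha_i\in\Pi:\la v,\alpha_i\ra=0\}$, one has $\Pi_x=S_0$ or $\Pi_x=S_0\cup\{\gamma\}$ according as $\la v,-\gamma\ra<1$ or $=1$; linear independence is clear in the first case and follows in the second from $\la v,\cdot\ra$ vanishing on $\mathrm{span}(S_0)$ but taking value $-1$ on $\gamma$. For the spanning/sign property, a positive $\alpha\in\Delta_x$ with $\la v,\alpha\ra=0$ expands as $\alpha=\sum n_i\alpha_i$, and nonnegativity forces $n_i=0$ for $\alpha_i\notin S_0$, so $\alpha\in\Z_{\geq 0}S_0$; while the case $\la v,\alpha\ra=1$ forces $\la v,-\gamma\ra=1$ and $-\gamma-\alpha\in\Z_{\geq 0}S_0$, giving $\alpha=-\gamma-\sum p_i\alpha_i$, a nonpositive integer combination of $\Pi_x$. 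Negative roots are treated symmetrically, producing the required sign pattern. The main obstacle is this last sign analysis, especially the case $\la v,\alpha\ra=1$, where the height-maximality of $-\gamma$ is used crucially to express $\alpha$ in terms of $\gamma$ and $S_0$; a secondary technicality is the initial reduction to $x$ in the compact torus via the polar decomposition.
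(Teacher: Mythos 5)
Your alcove argument --- translating $v$ into $A_0$ by a coroot (Lemma \ref{lemma:translation-in-Wa}), choosing a chamber whose closure contains $v$, and the final verification that every root of $\Delta_x$ is a nonnegative or nonpositive integral combination of $S_0$ or of $S_0\cup\{\gamma\}$ --- is correct and coincides with the paper's argument; your handling of the case $\la v,\alpha\ra=1$ via the height-maximality of $-\gamma$ is in fact spelled out more explicitly than in the paper.

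The gap is in the very first step, the production of the vector $v$. Writing $x=\exp(2\pi i v)$ with $v$ real only makes sense when every $\alpha(x)$ lies on the unit circle, and your proposed reduction to that case --- passing to the Levi subgroup $G_s$ centralizing the hyperbolic factor $s$ and running the alcove argument for the unitary factor there --- does not yield the stated proposition. Inside the Levi you would obtain a base of $\Delta_x$ contained in $\Pi_L\cup\{\gamma_{L,1},\dots,\gamma_{L,k}\}$, where $\Pi_L$ is a base of $\Delta_s$ and the $\gamma_{L,i}$ are the lowest roots of the (possibly several) simple components of $\Delta_s$. These affine nodes of the Levi are in general not the single lowest root $\gamma$ of $\Delta$, so the resulting configuration is not of the form $\Delta_x\cap(\Pi\cup\{\gamma\})$ for any ordering of $\Delta$; showing that it is nevertheless $W(\Delta)$-conjugate to such a configuration is essentially as hard as the proposition itself, and is not addressed. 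The paper avoids the reduction entirely: it constructs $v$ directly for an arbitrary semisimple $x$ by choosing a basis $\{b_j\}$ of the kernel of the evaluation map $\Z\Delta\to\C^*$, completing it by elements $\{c_j\}\subset\Z\Delta$ to a basis of $\R\Delta$, and setting $\la v,b_j\ra=1$ and $\la v,c_j\ra=\lambda_j$ with the $\lambda_j$ irrational and $\Q$-linearly independent, which forces $\la v,\alpha\ra\in\Z$ if and only if $\alpha(x)=1$, uniformly in $x$. With this construction in place of your reduction, the rest of your argument goes through verbatim.
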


\begin{proof}
 Evaluation on $x$ defines a map from the root lattice $\Z \Delta$ to $\C^*$ with kernel the lattice $\Z\Delta_x$. The first step is to find $v\in \R \Delta$ such that
 \begin{equation}\label{eq:ZDeltax}
	 \Z \Delta_x = \{ \al \in\Z \Delta \st \la v, \al\ra \in \Z\}. 	  
 \end{equation} Choose a basis $B=\{b_1,\ldots,b_r\}$ of the lattice $\Z\Delta_x$, and complete it with a set $\{c_{r+1},\ldots,c_n\}\subset \Z\Delta$ to a basis of $\R\Delta$. Define an element $v\in \R\Delta$ by  
\begin{align}\label{eq:defining-v}
\la v, b_j \ra &  =  1, & 
\la v, c_j \ra & = \lambda_j,  
\end{align}
for $\{\lambda_j\}\subset \R\setminus \Q$ linearly independent over $\Q$, so that \eqref{eq:ZDeltax} is satisfied.

Secondly, by Lemma \ref{lemma:translation-in-Wa}, we can assume that $v$ satisfying \eqref{eq:ZDeltax} lies on $A_0$. Indeed, the addition of a coroot to $v$, and hence of an element in $Tr(\Delta^\vee)$, does not change the condition  $\la v, \al\ra \in \Z$ for any $\al\in \Delta$.


The element $v$ lies in some Weyl chamber or in the boundary of some of them. By choosing a neighbouring Weyl chamber in the latter case, we get an order and a set of simple roots $\Pi=\{\al_i\}$. The positive roots $\al$ satisfy $$\la v,\al \ra \geq 0,$$
whereas the lowest root $\gamma$ and highest root $-\gamma$ with respect to this ordering satisfy
\begin{equation}\label{eq:inequality-lowest-root}
 \la v, \gamma \ra \geq -1, \qquad\qquad \la v,-\gamma\ra \leq 1.
\end{equation}

Finally, if \eqref{eq:inequality-lowest-root} are strict inequalities, by \eqref{eq:ZDeltax}, the roots in $\Delta_x$ are those $\al$ such that $\la v,\al \ra = 0$, that is,  suitable positive or negative linear combinations of the elements of $$\Pi_x = \{ \al_i \in \Pi \st \la v,\al_i\ra =0\}.$$ On the other hand, if equality is reached in \eqref{eq:inequality-lowest-root}, the roots in $\Delta_x$ are those $\al$ such that $\la v,\al \ra = 0, \pm 1$. In this case, these are suitable positive or negative linear combinations of 
$$ \Pi_x=\{ \al_i \in \Pi \st \la v,\al_i\ra =0\}\cup \{\gamma\},$$
and the result follows.

\end{proof}

The Dynkin diagram of $G_x$ can then be regarded inside the extended Dynkin diagram of $G$, which is obtained by adding to the Dynkin diagram of $G$ an extra node for the root $\gamma$ (and connecting it to the other nodes following the usual rules of angle they form, and length ratio). We show the extended Dynkin diagrams for simple groups in Table \ref{tab:extended}, where the white node corresponds to the lowest root $\gamma$.


 \begin{table}[h!]
	\begin{center}
		\begin{tabular}[320pt]{ccccc}
			\hline \hline
			\parbox[t]{30pt}{ \vspace{9pt}  \center \dynkin[extended]{A}{1}} & \parbox[t]{70pt}{\center  \dynkin[extended]{A}{}}& \parbox[t]{70pt}{\center \dynkin[extended]{B}{}}  & \parbox[t]{70pt}{ \vspace{9pt} \center \dynkin[extended]{C}{}} & \parbox[t]{70pt}{\center \dynkin[extended]{D}{}}    \\
			\footnotesize $A_1$ & \footnotesize $A_n$ & \footnotesize $B_n$ & \footnotesize $C_n$ & \footnotesize $D_n$ \\
			& \\ \hline
		\end{tabular}
		\begin{tabular}[320pt]{ccccc}
			\parbox[t]{30pt}{ \vspace{20pt}  \center \dynkin[extended]{G}{2}} & \parbox[t]{60pt}{  \vspace{20pt} \center  \dynkin[extended]{F}{4}}& \parbox[t]{60pt}{\center \dynkin[extended]{E}{6}}  & \parbox[t]{80pt}{ \vspace{10pt} \center \dynkin[extended]{E}{7}} & \parbox[t]{80pt}{\vspace{10pt} \center \dynkin[extended]{E}{8}}    \\
			\footnotesize $G_2$ & \footnotesize $F_4$ & \footnotesize $E_6$ & \footnotesize $E_7$ & \footnotesize $E_8$ \\
			& \\ \hline \hline &\\
		\end{tabular}
	\end{center}
	\caption{Extended Dynkin diagrams for simple groups.}\label{tab:extended}
\end{table}

Note that the previous proposition is giving the Dynkin diagram and hence the Lie type of the semisimple part of the centralizer. The abelian part has rank equal to $\rk G - \rk G^{ss}_x$. At the level of groups, one has a finite quotient of a product of a semisimple and an abelian group.

\begin{example}
	The Lie type of the centralizer of a semisimple element in the Lie group $\SO_{2n}$ is $$ \SO_{2r} + \SL_{s_1+1} + \ldots \SL_{s_l+1} + \SO_{2t} + \C^u,$$
	with $2r+\sum_{i=1}^l (s_i+1) +2t + u= 2n$. 
\end{example}

\subsection{Computation of descendants}
\label{sec:visual-symmetric}

We introduced the Satake diagram in Section \ref{sec:Satake} by means of a maximal $\theta$-split torus $A$. We want to use it now to extract information about the  descendants. We first recall two results.

\begin{lemma}[\cite{richardson}, \S 7.5]\label{lem:richardson}
	Let $A$ be a fixed maximal $\theta$-split torus. An element $x\in P=\{ g\theta(g)^{-1} \st g\in G \}$ is semisimple if and only if $H_0\cdot x$ meets $A$.
\end{lemma}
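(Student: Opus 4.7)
The plan is to establish both directions of the biconditional. The forward direction is essentially definitional, while the backward direction relies on two structural inputs about symmetric pairs, which in the reductive setting are due to Vust--Richardson.

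For the easy direction, suppose $hxh^{-1}\in A$ for some $h\in H_0$. Since every element of the torus $A$ is semisimple and conjugation preserves semisimplicity, $x$ is semisimple.

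For the hard direction, let $x\in P$ be semisimple. A direct computation from $x=g\theta(g)^{-1}$ together with $\theta^2=\Id$ gives $\theta(x)=\theta(g)g^{-1}=x^{-1}$, so $\theta$ inverts $x$. Consequently $G_x$ is reductive and $\theta$-stable, and $x$ lies in the identity component $Z(G_x)^0$ of its center. The next step is to show that $x$ in fact lies in some maximal $\theta$-split torus of $G$. The torus $Z(G_x)^0$ decomposes, up to isogeny, as the product of its $\theta$-fixed subtorus and its $\theta$-anti-invariant subtorus $A''$. The condition $\theta(x)=x^{-1}$ alone places $x$ only in the preimage of $A''$ under this product map, modulo an element of order two in the $\theta$-fixed factor; however, the stronger hypothesis $x\in P$ upgrades this to $x\in A''$. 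Concretely, one uses that the Cayley-type map $t\mapsto t\theta(t)^{-1}$ on a $\theta$-stable torus surjects onto its $\theta$-anti-invariant subtorus, because on the cocharacter lattice the operator $1-\theta$ has image exactly the $\theta$-anti-invariants. Extending $A''$ to a maximal $\theta$-split torus $A'$ of $G$, we obtain $x\in A'$. The proof then concludes by invoking the theorem that all maximal $\theta$-split tori of $G$ are $H_0$-conjugate (the same conjugacy theorem underlying the classification of complex symmetric pairs used in Section \ref{sec:Satake}): choose $h\in H_0$ with $hA'h^{-1}=A$, so that $hxh^{-1}\in A$.

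The main obstacle is the order-two ambiguity appearing in the middle of the hard direction. It is precisely at this step that membership in $P$ (as opposed to the weaker condition $\sigma(x)=x$, equivalently $\theta(x)=x^{-1}$) is essential, and any proof must locate this distinction somewhere. The rest is structural bookkeeping about tori in reductive groups equipped with involutions.
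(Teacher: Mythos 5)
The paper gives no argument for this lemma: it is imported from Richardson (\S 7.5) and used as a black box, so your attempt can only be measured against the literature. Your forward direction is fine, and the skeleton of your backward direction (place $x$ in a $\theta$-stable torus, split it into $\theta$-fixed and $\theta$-split parts, use $x\in P$ to dispose of the $\theta$-fixed component, then extend to a maximal $\theta$-split torus and invoke $H_0$-conjugacy) is the standard one. But two load-bearing steps fail as written. The first is the claim that a semisimple $x\in P$ lies in $Z(G_x)^0$: this is false. In the pair $(\SL_2,\C^*)$ of Example \ref{ex:SL2-C}, with $\theta=\Ad I_{1,1}$, the element $x=-\Id$ equals $J\theta(J)^{-1}$ and so is a semisimple element of $P$, yet $G_x=\SL_2$ and $Z(G_x)^0=\{1\}$, so your decomposition never gets started for this $x$ (even though the lemma holds for it, $-\Id$ being central and hence contained in every maximal $\theta$-split torus). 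The correct statement is that $x$ lies in a $\theta$-stable \emph{maximal torus} of $G$: since $\theta(x)=x^{-1}$, the reductive group $G_x^0$ is $\theta$-stable and contains $x$ in its centre, and any $\theta$-stable maximal torus of $G_x^0$ is a maximal torus of $G$ containing $x$. The decomposition into $T^+\cdot T^-$ should be applied to that torus.

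The second, more serious, gap is that the step where membership in $P$ is supposed to remove the order-two ambiguity is asserted rather than proved. Writing $x=t_+t_-$ with $t_+\in(T^\theta)^0$ of order at most two and $t_-$ in the maximal $\theta$-split subtorus $T^-$, you must show $t_+\in T^+\cap T^-$. The fact you invoke --- surjectivity of $t\mapsto t\theta(t)^{-1}$ from $T$ onto $T^-$ --- gives the inclusion $T^-\subseteq P$, which is the \emph{opposite} of what is needed; it says nothing about an element of $P\cap T$, because the witness $g$ in $x=g\theta(g)^{-1}$ ranges over all of $G$ and you have not shown it can be replaced by an element of $T$ (equivalently, that $P\cap T$ meets only the coset $T^-$ and not the other cosets $t_+T^-$). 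Bridging exactly this is the substance of Richardson's lemma; one standard route is to show that $P$ is a connected component of the closed set $\{y\in G\st y\theta(y)=1\}$ and then compare connected components inside $T$, where they are the finitely many cosets $t_+T^-$. As a minor aside, $(1-\theta)X_*(T)$ is in general only a finite-index subgroup of the anti-invariant cocharacter lattice (it is $2\Z\subset\Z$ when $\theta$ inverts $\mathbb{G}_m$); surjectivity of the Cayley map onto $T^-$ still holds over $\C$, but by divisibility of $\C^*$, not by the lattice identity you state.
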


\begin{lemma}[\cite{vust}, Cor. 5]\label{lem:vust}
	The group $H_0$ acts transitively on maximal $\theta$-split  tori.
\end{lemma}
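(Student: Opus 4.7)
The plan is to combine Lemma \ref{lem:richardson} with a uniqueness property for maximal $\theta$-split tori passing through a sufficiently generic point. Let $A$ and $A'$ be two maximal $\theta$-split tori. A first observation is that $A' \subseteq P$: any $a' \in A'$ admits a square root $b \in A'$ (by connectedness and divisibility of the torus), and then $a' = b^2 = b \cdot \theta(b)^{-1}$ since $\theta(b) = b^{-1}$. Consequently every element of $A'$ is semisimple and lies in $P$, so Lemma \ref{lem:richardson} applies to elements of $A'$.

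Next I would establish the key statement: \emph{if $a' \in A'$ is regular, i.e.\ the connected centraliser $Z_G(a')^0$ coincides with $Z_G(A')^0$, then $A'$ is the unique maximal $\theta$-split torus of $G$ containing $a'$.} The regular elements form a Zariski-dense open subset of $A'$ (the complement of finitely many proper subtori, cut out by the non-trivial restricted roots). To see the uniqueness, suppose $B$ is any maximal $\theta$-split torus with $a' \in B$. Then $B \subseteq Z_G(a')^0 = Z_G(A')^0$, so $B$ centralises $A'$. Hence the subgroup generated by $B$ and $A'$ is a torus on which $\theta$ acts by inversion (both $B$ and $A'$ being $\theta$-split), so it is itself a $\theta$-split torus containing both. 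Maximality of $A'$ and $B$ forces $B = A'$.

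With this in hand, the proof is immediate. Pick a regular $a' \in A'$ and apply Lemma \ref{lem:richardson} to obtain $h \in H_0$ with $h a' h^{-1} \in A$. Conjugation by $h \in H_0$ commutes with $\theta$ (as $H_0 \subseteq G^{\theta}$), hence preserves $\theta$-splitness, so $h A' h^{-1}$ is again a maximal $\theta$-split torus and $h a' h^{-1}$ is a regular element of it. By the uniqueness statement applied to $h A' h^{-1}$, the torus $h A' h^{-1}$ is the only maximal $\theta$-split torus containing $h a' h^{-1}$. Since $A$ also contains this element and is maximal $\theta$-split, we conclude $A = h A' h^{-1}$.

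The main subtlety, and the only step requiring genuine input, is the characterisation of regular elements of $A'$ and the verification that they form a Zariski-dense open subset. This rests on the structure of the restricted root system $\Delta(G, A')$: the condition $Z_G(a')^0 = Z_G(A')^0$ is equivalent to $\alpha(a') \neq 1$ for every non-trivial restricted root $\alpha$, a classical fact from the theory of reductive groups acting on symmetric varieties. Everything else is formal manipulation, and the lemma draws its real strength from Lemma \ref{lem:richardson} itself---most of the substantive content is effectively packaged inside Richardson's result.
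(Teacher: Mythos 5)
The paper gives no proof of this lemma: it is quoted verbatim from \cite{vust}, Cor.~5, so there is nothing internal to compare against. Your argument is a genuine derivation, and the individual steps check out: every element of a $\theta$-split torus lies in $P$ (via a square root, since $\theta$ inverts the torus), the set of $a'\in A'$ with $\alpha(a')\neq 1$ for all roots $\alpha$ nontrivial on $A'$ is dense open and characterises $Z_G(a')^0=Z_G(A')$, the product of two commuting $\theta$-split tori is again $\theta$-split (the inversion computation needs the commutativity you have just established, and you use it correctly), and conjugation by $H_0\subseteq G^\theta$ preserves $\theta$-splitness and regularity. So as a formal deduction from Lemma \ref{lem:richardson} the proof is valid.

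The one substantive reservation is logical order rather than correctness. In the literature the dependency runs the other way: Richardson's statement that every semisimple $x\in P$ is $H_0$-conjugate into $A$ is itself proved by placing $x$ in some maximal $\theta$-split torus and then invoking Vust's conjugacy theorem --- i.e.\ the very lemma you are proving. Within this paper both lemmas are black boxes, so your derivation is not circular \emph{in situ}, but it cannot serve as an independent proof of Vust's result; it should be presented as showing that, granted Lemma \ref{lem:richardson}, the conjugacy statement follows by the standard ``unique maximal torus through a regular element'' argument. If you wanted a self-contained proof you would have to replace the appeal to Lemma \ref{lem:richardson} by a direct argument that some regular element of $A'$ can be moved into $A$ by $H_0$, which is where the real work of \cite{vust} lies.
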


We consider the extended Satake diagram.

\begin{definition}
	The extended Satake diagram of $(G,H,\theta)$ is the diagram resulting from colouring and adding arrows to the extended Dynkin diagram of $G$ following the rules of the Satake diagram of $(G,H,\theta)$.
\end{definition}

\begin{lemma}
	The extra node of the extended Dynkin diagram of a (non-trivial) symmetric pair remains white and there are no bars on the extra node.
\end{lemma}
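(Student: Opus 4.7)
My plan is to verify the two claims separately, using standard facts about $\theta$-bases and Weyl groups.

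First I would show the extra node is white, i.e., $\gamma \notin \Delta_0$. Since the pair is non-trivial, by \cite[\S 1]{vust} a maximal $\theta$-split torus $A$ is nonzero, so $\Delta_0 \subsetneq \Delta$ and $\Pi_0 := \Pi \cap \Delta_0 \subsetneq \Pi$, giving some $\alpha_0 \in \Pi \setminus \Pi_0$. For simple $G$ the highest root expands as $-\gamma = \sum_{\alpha \in \Pi} c_\alpha \alpha$ with $c_\alpha > 0$ for every simple root; since $c_{\alpha_0} > 0$, this forces $-\gamma \notin \R\Pi_0$, hence $\gamma \notin \Delta_0$.

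For the second claim, no bar is incident to the extra node precisely when $-\theta(-\gamma) \equiv -\gamma \pmod{\R\Pi_0}$. My strategy is to compose $-\theta$ with the longest element $w_0$ of the parabolic Weyl subgroup $W_{\Pi_0}$ generated by reflections in $\Pi_0$ and to show that the result preserves the positive system $\Delta^+$. Indeed, by the $\theta$-basis property (Definition \ref{def:theta-basis}) $-\theta$ stabilizes $\Delta^+ \setminus \Delta_0$, and since $-\theta\alpha = -\alpha$ for $\alpha \in \Delta_0$ it swaps $\Delta_0^+$ and $\Delta_0^-$; the element $w_0$ satisfies the same two properties, the first being a standard fact about longest elements of parabolic subgroups. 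Their composition $-\theta \circ w_0$ therefore stabilizes $\Delta^+$, so it is induced by a Dynkin diagram automorphism of $\Pi$, which must fix the unique highest root, yielding $-\theta(-\gamma) = w_0(-\gamma)$.

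To finish, I would use that $w_0$ is a product of reflections $s_\beta$ with $\beta \in \Pi_0$, each satisfying $s_\beta(v) - v \in \R\beta$, so by iteration $w_0(v) - v \in \R\Pi_0$ for every $v \in \R\Delta$. Applied to $v = -\gamma$ and combined with the previous identity this gives $-\theta(-\gamma) - (-\gamma) \in \R\Pi_0$, the desired congruence.

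The conceptually key observation is that although $-\theta$ alone need not preserve $\Delta^+$ (it flips $\Delta_0^+$), its composition with $w_0$ does, reducing matters to the elementary fact that Dynkin diagram automorphisms fix the highest root; I expect this factorization step to be the main obstacle, while the remaining steps are routine linear algebra of Weyl reflections.
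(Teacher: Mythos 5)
Your proof is correct, and it is worth comparing the two halves separately. The whiteness argument is essentially the paper's: every simple root occurs in the highest root $-\gamma$ with strictly positive coefficient, non-triviality of the pair gives a white simple root, and $\Delta_0\subseteq\R\Pi_0$ (because $\Pi_0$ is a base of $\Delta_0$) then forces $\gamma\notin\Delta_0$. For the bar claim, however, the paper's proof is a one-line assertion that ``$-\theta$ permutes the simple roots'' and hence fixes the unique lowest root; taken literally this is not accurate when $\Pi_0\neq\emptyset$, since $-\theta$ sends each black simple root to its negative and only permutes the white nodes modulo $\Z\Pi_0$ (for instance in type EIV one has $\langle -\gamma,\beta^\vee\rangle\neq 0$ for some $\beta\in\Pi_0$, so $-\theta(-\gamma)\neq-\gamma$ as actual roots). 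Your factorization through the longest element $w_0$ of $W_{\Pi_0}$ is precisely the device that repairs this: $-\theta\circ w_0$ genuinely preserves $\Delta^+$, hence is a diagram automorphism fixing the highest root, and $w_0(v)-v\in\R\Pi_0$ then yields the congruence $-\theta(-\gamma)\equiv-\gamma\pmod{\Z\Pi_0}$, which is the statement actually needed for the absence of a bar. So your route is a sharpened and fully rigorous version of the paper's sketch rather than a different strategy; what it buys is an argument that works uniformly for all Satake diagrams, including those where the highest root is not orthogonal to the black simple roots.
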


\begin{proof}
	The fact that the node is white follows from the fact that all simple roots contribute to the lowest root and the roots are independent. Moreover, the involution sends the lowest root with respect to a $\theta$-basis to itself, as it is unique and the action of $-\theta$ permutes the simple roots.
\end{proof}

\begin{remark}
	If we consider the possibilities of Remark \ref{rem:Satake-extreme}, we have that when the involution of the Satake diagram is trivial (all nodes are black), the extra node is also black. On the other hand, in the case of $(G\times G,\Delta G)$ for $G$ a simple group, there are two extra nodes, which are connected by a bar.
\end{remark}

The following result is fundamental to this work.

\begin{theorem}\label{theo:visual-descendants}
	The Satake diagrams of descendants $(G_x,H_x,\theta_{|G_x})$ for semisimple $x\in P$ are exactly the (possibly disconnected) Satake diagrams obtained by erasing at least one white node (and its incident edges) from the extended Satake diagram of $(G,H,\theta)$ in such a way that nodes connected by a bar are both kept or erased.
\end{theorem}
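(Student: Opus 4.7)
The strategy is to realize the Satake diagram of a descendant as a subdiagram of the extended Satake diagram of $(G,H,\theta)$, by adapting the classical description of semisimple centralizers via extended Dynkin diagrams (Proposition \ref{prop:subdiagram}) to the symmetric setting.

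First, I would reduce to the case $x\in A$: by Lemma \ref{lem:richardson}, any semisimple $x\in P$ can be conjugated by an element of $H_0$ to lie in $A$, leaving the isomorphism class of the descendant unchanged, and Lemma \ref{lem:vust} makes the choice of maximal $\theta$-split torus immaterial. With $x\in A$, $T$ is still a maximal torus of $G_x$ and $A$ a maximal $\theta$-split torus of $G_x$; the subsystem $\Delta_x\subseteq\Delta$ is $(-\theta)$-stable because $\theta(x)=x^{-1}$ gives $\alpha(x)=(-\theta\alpha)(x)$; and $\Delta_0\subseteq\Delta_x$, since $\alpha|_{\fa}=0$ together with $x\in A$ forces $\alpha(x)=1$.

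Second, I would apply Proposition \ref{prop:subdiagram} choosing the defining vector $v\in\R\Delta$ inside $\fa$ (under the Killing-form identification of $\R\Delta$ with $\ft_\R$). Since $\Z\Delta_x$ is $(-\theta)$-stable and contains $\Z\Delta_0$, one can pick a $(-\theta)$-compatible $\Z$-basis of $\Z\Delta_x$ and complete it by $(-\theta)$-orbits in $\R\Delta$, then assign values of $\la v,\cdot\ra$ matched on each $(-\theta)$-orbit so that $v\in\fa$ while still cutting out exactly $\Z\Delta_x$ by the integrality condition. The key consequence $\la v,-\theta\alpha\ra=\la v,\alpha\ra$ then guarantees that the Weyl chamber selected by $v$ produces a $\theta$-basis $\Pi$: for $\alpha\notin\Delta_0$, $\alpha>0\Leftrightarrow -\theta\alpha>0$. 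Proposition \ref{prop:subdiagram} hands over $\Pi_x:=\Delta_x\cap(\Pi\cup\{\gamma\})$ as a set of simple roots for $\Delta_x$, and the restricted ordering is still a $\theta$-basis on $\Delta_x$.

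Third, I would verify that the Satake diagram of $(G_x,H_x,\theta|_{G_x})$ built from $\Pi_x$ coincides with the diagram obtained by erasing $(\Pi\cup\{\gamma\})\setminus\Pi_x$ from the extended Satake diagram of $(G,H,\theta)$. The erased nodes are white because $\Delta_0\subseteq\Delta_x$ rules out erasing a black one; the bar condition holds because $(-\theta\alpha)(x)=\alpha(\theta(x))^{-1}=\alpha(x)$ couples bar-partners under erasure; and the Dynkin edges, colouring, and bars on $\Pi_x$ are inherited from $\Pi\cup\{\gamma\}$ via $\Delta_x\cap\Delta_0=\Delta_0$ and the restriction of the $(-\theta)$-permutation on $\Pi\setminus\Delta_0$. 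For the converse, given any admissible erasure described by a white subset $S$ respecting bars, I would build $v'\in\fa$ by setting $\alpha(v')=0$ for $\alpha\in(\Pi\cup\{\gamma\})\setminus S$ and $\alpha(v')$ generic irrational for $\alpha\in S$ (consistent on bar-pairs by the bar condition), and take $x:=\exp(2\pi i v')\in A\subseteq P$; this yields a semisimple $x$ whose descendant has exactly the prescribed Satake diagram.

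I expect the main obstacle to lie in the second step: simultaneously enforcing $v\in\fa$ (which automatically produces the $\theta$-basis) and the $\Q$-independence condition of Proposition \ref{prop:subdiagram} (which must cut out exactly $\Z\Delta_x$) requires a careful $(-\theta)$-equivariant choice of basis and a careful apportioning of rationally independent values across $(-\theta)$-orbits, especially when $\Z\Delta_x$ strictly contains $\Z\Delta_0$ and a tie-break within $\R\Delta_0$ is needed without disturbing either condition.
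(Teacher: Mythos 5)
Your overall strategy matches the paper's: reduce to $x\in A$ via Lemmas \ref{lem:richardson} and \ref{lem:vust}, adapt Proposition \ref{prop:subdiagram} so that the chosen basis is a $\theta$-basis, and then argue both that every descendant diagram is an admissible erasure and that every admissible erasure is realized. (For the $\theta$-basis step the paper does not force $v\in\fa$ as you propose; it keeps the $v$ from Proposition \ref{prop:subdiagram} and instead observes that $0\le\la v,\al\ra<1$ on any adjacent chamber, that $\la v,\al\ra=0$ iff $\la v,-\theta\al\ra=0$ because $\Delta_x$ is $(-\theta)$-stable, and then breaks ties by choosing the neighbouring Weyl chamber so that the pairs $\al$, $-\theta\al$ with $\la v,\al\ra=0$ are simultaneously positive. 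This sidesteps exactly the equivariant-lattice difficulty you flag as your main obstacle.) However, your realizability argument has a genuine gap: you set $\al(v')$ to be ``generic irrational'' on the erased set $S$ and zero on the kept nodes, but $\gamma$ is not a free coordinate --- it satisfies $\gamma=-\sum m_i\al_i$ --- so if the affine node is to be \emph{kept} while some white simple nodes are erased, you need $\sum_{\al_i\in S}m_i\al_i(v')\in\Z$, which generic irrational values preclude. Your construction therefore only produces descendants in which the affine node is erased, and misses, e.g., $(D_8,D_6+D_2)$ inside $(E_8,E_7+A_1)$. The paper instead assigns the common value $z$ to the erased white nodes with $z$ a primitive $m$-th or $(m+1)$-th root of unity (where $\gamma(x)=z^{-m}$), according to whether the affine node is kept or erased, and even needs a separate case-check when $m=1$.

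The second gap is in your claim that the colouring and bars on the surviving nodes are simply ``inherited.'' The Satake diagram of the descendant is taken with respect to the semisimple part of $G_x$, whose maximal $\theta$-split torus $A'=A\cap G_x^{ss}$ is in general strictly smaller than $A$; a priori a root that is nontrivial on $\fa$ could become trivial on $\fa'$ (a white node turning black), or two white roots could acquire a bar. The paper devotes its Step 2 to excluding this, via the finite-covering maps $(\al_1,\dots,\al_s):A\to(\C^*)^s$ and their analogues for each connected component of the subdiagram: a white node turning black or a new bar appearing would force one of these maps to have infinite kernel. Your appeal to $\Delta_x\cap\Delta_0=\Delta_0$ and the restriction of the $(-\theta)$-permutation does not address this shrinking of the split torus, so this step needs an actual argument.
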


\begin{proof}
We split the proof into three steps.

\textbf{Step 1: finding a suitable $\theta$-basis.} Let $x\in P$ be semisimple. As a consequence of Lemmas \ref{lem:richardson} and \ref{lem:vust}, we can find a maximal $\theta$-split torus $A$ such that $x\in A$. Choose a maximal $\theta$-stable torus $T$ containing $A$.  We want to choose a $\theta$-basis  (Definition \ref{def:theta-basis}) satisfying \eqref{eq:Pix} in Proposition \ref{prop:subdiagram}, of which we use the proof. Consider the element $v\in A_0$ defined by \eqref{eq:defining-v} and the action of the affine Weyl group. The key point is to now choose the neighbouring Weyl chamber appropriately.

For any Weyl chamber whose closure contains $v$, we check that the corresponding set of simple roots $\Pi$ satisfies, for all $\al\in\Pi$, 
$$ 0\leq \la v,\al \ra < 1.$$
Being greater or equal than zero follows from $v$ being in the closure of the Weyl chamber, while being less or equal than one, from $\la v,-\gamma\ra\leq 1$, where $-\gamma$ is the highest root. Moreover, if $\la v,\be \ra=1$ for some $\be\in\Pi$, then $\la v,\al\ra=0$ for $\al\neq \be$, as $\la v,-\gamma\ra\leq 1$, and then $\Delta_x$ would be the whole $\Delta$.

We have that $\al\in \Delta_x$ if and only if $-\theta\alpha\in \Delta_x$. Hence, for $\al\in\Pi$, $\la v,\al\ra >0$ if and only if $\la v,-\theta \al\ra >0$, so both $\al$ and $-\theta\al$, when different, are  positive. On the other hand, if $\la v,\al \ra =0$, then $\la v,-\theta \al \ra =0$, so, in the case where $\al $ and $-\theta\al$ are two different simple roots, we will choose the neighbouring Weyl chamber in such a way that all these pairs $\al$, $-\theta \al$ are positive.

This gives a set of simple roots such that $\al>0$ and $-\theta\al>0$ for $\al\in\Pi$, so we have that $\al>0$ implies $-\theta \al>0$ for $\al\in\Delta$, the condition for being a $\theta$-basis.

\textbf{Step 2: necessary conditions on the extended Satake diagram.} Consider the Satake diagram with respect to this choice of $\theta$-basis. Just as in the proof of Proposition \ref{prop:subdiagram}, the simple roots $\al\in \Pi$ such that  $\la v, \al \ra = 0$ and possibly the lowest root $\gamma$ when $\la v, \gamma \ra = -1$ become a set of simple roots for $G_x$. As discussed after Proposition \ref{prop:subdiagram}, the Dynkin diagram of the centralizer of $x$ is given by a subdiagram of the extended Dynkin diagram, that is, a diagram obtained by erasing nodes.
Since $x$ is in $A$, we have that $\alpha(x)=1$ for those $\alpha\in\Pi$ corresponding to black nodes, and hence we cannot erase any of the black nodes. Moreover, if the white node for a simple root $\al$ is erased, $\alpha(x)\neq 1$, as
$$ (-\theta\alpha)(x)=(\alpha(\theta(x)))^{-1}=\alpha(x)\neq 1,$$
the node $-\theta\alpha$, in case it is different and hence connected by an arrow, is also erased.


We show next that the colouring and the bars stay the same after erasing some white nodes.  The involution on $G_x$ is the restriction of the involution on $G$, and the groups $G$ and  $G_x$ share the same maximal torus, but the semisimple part of $G_x$ is possibly smaller, so, in principle, white nodes could potentially become black or connected by a bar. We show that this is not the case.

Consider first the case in which we erase some nodes from the Dynkin diagram (without extending it). The roots $\Pi=\{\al_i\}_{1}^r$ corresponding to the nodes of the diagram give a finite-covering map 
\begin{equation}\label{eq:covering-map-T}
(\al_1,\ldots,\al_r):T\to (\C^*)^r.
\end{equation}
By considering the roots corresponding to the white nodes and choosing only one in case they are connected with a bar (say, by reordering if necessary, $\{\al_i\}_1^{s}$, where $s$ is the rank of $A$), we get a finite-covering map 
\begin{equation}\label{eq:covering-map-A}
(\al_1,\ldots,\al_{s}):A\to (\C^*)^{s}.
\end{equation}
When looking at a connected component of the Dynkin diagram of the semisimple part of $G_x$, we have a corresponding torus $T'\subset T$, and $A'=T'\cap A$. The roots corresponding to $A'$ are a subset $\{\al_i\}_1^{s'}\subset \{\al_i\}_1^{s}$, where $s'$ is the rank of $A'$. A white node would turn black when its corresponding root vanished on $A'$, but this would mean that the kernel of this root is not finite, which contradicts \eqref{eq:covering-map-A} being a finite-covering map. Thus, a white node cannot turn black. Analogously, no white node can become connected to any other root, as this would again imply that \eqref{eq:covering-map-A} is not a finite-covering map.

For the general case, any choice of all but one white node, say $\alpha_j$, of the extended Dynkin diagram gives such covering maps, like \eqref{eq:covering-map-A},  for the maximal $\theta$-split torus $A$ of $G$,
$$(\alpha_1,\ldots,\widehat{\alpha_j},\ldots,\alpha_r,\gamma): T\to (\C^*)^r,$$ where $\widehat{\alpha_j}$ denotes that the root $\alpha_j$ is missing. So no white root can become black or connected to other root when erasing the first node. When more roots are erased, we are in the situation of the previous paragraph.

\textbf{Step 3: all suitable Satake diagrams are realizable.} 	Finally, we show that all Satake diagrams where white nodes have been erased from the extended Satake diagrams are actually realizable as the semisimple part of some $G_x$ for $x$ semisimple in $A$. We will do this by actually describing an element $x\in A$. In the lowest root $\gamma$, every simple roots appears with multiplicity less or equal to $-1$. The structure of $G_x$, that is, the roots $\beta$ such that $\beta(x)=1$, depends only on the values of $\alpha_i(x)$. By the surjectivity of the maps \eqref{eq:covering-map-T} and \eqref{eq:covering-map-A}, we will always find an element $x\in T$ for any choice of values for $\{\alpha_i(x)\}$. We set these values to be $1$ both for the roots of the black nodes and the white nodes that we preserve, and the same $z$ for the white nodes we erase. The value of $\gamma(x)$ is determined by these choices: we have $\gamma(x)=z^{-m}$ for some $m>0$ (the case $m=0$ means that $G_x=G$).
	If we want to erase the node corresponding to the lowest root, we can take $z$ to be an $m+1$-primitive root of unity. If we want to keep  the white node corresponding to the lowest root, we take $z$ to be an $m$-primitive root of unity. The fact that $z$ is an $m$ or $m+1$-primitive root assures that the roots that stay are a set of simple roots. Indeed, for $\al\in\Pi$, the value $\alpha(x)$ ranges from $z^{-m}$ to $z^{m}$. When we are erasing the lowest root ($z$ is an $m+1$-primitive root), the only option for $\alpha(x)$ to be $1$ is for $\alpha$ to be a positive or negative linear combination of the roots in $\Pi$ which are $1$ on $x$, exactly those nodes that we are keeping. When we are keeping the lowest root ($z$ is an $m$-primitive root), $\alpha(x)$ is $1$ when it is a positive or negative linear combination of the lowest root and the simple roots that are $1$ on $x$, as any other root will have value $z^{t}$ with $-m<t<m$ and $t\neq 0$. A final check is needed: when $m=1$, which is only possible when a simple root appears with multiplicity $-1$ in the lowest root (that is, in types $A$, $B$, $C$, $D$, $E_6$ and $E_7$), keeping the lowest root means setting $z=1$, which would mean keeping everything. Still, the statement of the theorem still holds, as it can be verified case by case: the Dynkin diagram resulting from erasing a simple root with multiplicity $-1$ from the extended Dynkin diagram is actually the starting Dynkin diagram (but in a different position). We show here the possible cases, with a double node for the lowest root and the other nodes white for simplicity (other suitable colourings can be read from Table \ref{tab:Satake}). Note that for $A_n$ and $E_6$ more than one node appears with multiplicity $-1$ but the resulting diagram is the same.	
	\begin{center}
	\vspace{-.25cm}
	 \begin{tabular}[320pt]{ccc}
 \parbox[t]{70pt}{\center  \dynkin[extended, affineMark=O]{A}{xo.oo}} & \parbox[t]{70pt}{\center \dynkin[extended, affineMark=O]{B}{xoo.ooo}}  & \parbox[t]{70pt}{ \vspace{2pt} \center \dynkin[extended, affineMark=O]{C}{ooo.oox}}  \\
			\footnotesize $A_n$ & \footnotesize $B_n$ & \footnotesize $C_n$ \\
		\end{tabular}
		
		\begin{tabular}[320pt]{ccccc}
		  \parbox[t]{70pt}{\center \dynkin[extended, affineMark=O]{D}{xoo.ooo}}  & 
			\parbox[t]{60pt}{\center \dynkin[extended, affineMark=O]{E}{xooooo}}  & \parbox[t]{80pt}{ \vspace{8pt} \center \dynkin[extended, affineMark=O]{E}{oooooox}}    \\
			\footnotesize $D_n$  & \footnotesize $E_6$ & \footnotesize $E_7$
		\end{tabular}
	\vspace{-.25cm}
	\end{center}

%

		
\end{proof}

Note that the diagram obtained by erasing a white node makes sense as a union of Satake diagrams. In this process, we can find diagrams of the type  $(G,G)$, $(G\times G,\Delta G)$, which are always regular by Remark \ref{ex:regular-extreme}.


\begin{example}\label{ex:sym-desc}
	
The pairs $(D_6,A_5+\C)$ and $(D_4,A_3+\C)$ are descendants of the pair $(E_7,D_6+\C)$:
$$				\dynkin[extended, affineMark=X]{E}{X*oo*o*}, \qquad \qquad 
		\qquad
		\dynkin[extended, affineMark=X]{E}{X*oo*XX}.$$
The pairs  $(E_7,E_6+\C)$ and $(D_8,D_6+D_2)$ are  descendants $(E_8,E_7+A_1)$:
$$
				\dynkin[extended, affineMark=X]{E}{o****ooo}, \qquad\qquad\qquad \dynkin[extended, affineMark=o]{E}{X****ooo}.
$$
We can also deduce that some pairs cannot be descendants of other pairs. For instance, the pair $(C_{2r},C_r+C_r)$ cannot be descendant of any exceptional symmetric pair. Since it has a double link, it could only appear as a descendant in the pairs with $G=G_2,F_4$. However, the colouring does not match, so it is not a possible descendant. 
\end{example}

Theorem \ref{theo:visual-descendants} combined with the classification of symmetric pairs gives the computation of all the descendants.

\begin{theorem}
	The list of descendants for simple complex symmetric pairs  is given by Tables \ref{tab:cla-des} and \ref{tab:exc-des}.
\end{theorem}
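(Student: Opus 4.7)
The proof will be a direct, systematic application of Theorem \ref{theo:visual-descendants} to each of the simple complex symmetric pairs classified in Table \ref{tab:Satake}. The strategy is entirely mechanical once one has the extended Satake diagram: draw the extended Dynkin diagram (Table \ref{tab:extended}), superimpose the colouring and bars of the Satake diagram (the extra node is always white and carries no bar, by the lemma stated just before Theorem \ref{theo:visual-descendants}), and then enumerate every way to erase a nonempty collection of white nodes subject to the constraint that bar-linked nodes must be erased together.

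First I would dispose of the exceptional pairs (Table \ref{tab:exc-des}). For each of the twelve exceptional symmetric pairs one writes down the extended Satake diagram and runs through every admissible erasure. Since the extended diagrams have at most nine nodes and only a few white ones (often paired by bars), the list in each case is short. For each resulting subdiagram, one identifies the Satake diagram obtained by the rule
\[
\Pi_x \;=\; \Delta_x \cap (\Pi \cup \{\gamma\})
\]
with its named symmetric pair, reading off the Lie type of the semisimple part from the remaining Dynkin nodes, keeping track of the abelian part (which has rank equal to the number of white nodes erased, counting a bar-pair as one degree of freedom in the $\theta$-split torus), and verifying using Table \ref{tab:Satake} that colouring-plus-bars matches a listed pair. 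Illustrations of this procedure already appear in Example \ref{ex:sym-desc}. The main obstacle here is bookkeeping: one must be careful that the abelian rank is computed correctly, and when a subdiagram looks like $(G,G)$ (all black) or $(G\times G,\Delta G)$ (pairs of white nodes connected by a bar), the result is a trivial or diagonal descendant, which must still be recorded.

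For the classical pairs (Table \ref{tab:cla-des}) the diagrams are infinite families, so rather than listing I would describe the combinatorial pattern of admissible erasures. For $A_n$-type pairs the extended Dynkin diagram is the affine cycle $\tilde A_n$; erasing any $k$ white nodes (respecting bar constraints coming from the outer involution in the $A \mathrm{III}$ cases) breaks the cycle into arcs, each contributing a linear-type factor, plus an abelian part. For $B$, $C$, $D$ with inner involutions one erases a set of white nodes from the extended diagram, producing in general a product of two shorter $B$/$C$/$D$ diagrams together with interior $A$-type pieces that come from erased segments in the middle; this matches the general description of the Lie type of the $\SO_{2n}$-centralizer observed after Proposition \ref{prop:subdiagram}. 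The outer $D_r$/$A_n$ cases with bars must be handled so that bar-linked nodes are erased together, which forces the resulting pieces to be symmetric around the fold axis. In each infinite family I would write out explicitly what happens when one erases a single white node (or a single bar-pair), then one further set of white nodes, and argue by iterating that the full tree of descendants coincides with the family indexed in Table \ref{tab:cla-des}.

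The main obstacle, and where I would spend the most care, is not the erasure procedure itself but the translation of the resulting (possibly disconnected) Satake subdiagram into the correct named symmetric pair of a reductive group, and in particular identifying the abelian factors and the finite central quotients correctly. Theorem \ref{theo:visual-descendants} only recovers the Lie-algebra-level data; so the identification step relies on the observations from Section \ref{sec:Satake} that, for $G$ simply connected, the Satake diagram together with the tori $A \subset T$ determines the pair, and the fact that centralizers in a simply connected group are connected (used earlier in the paper). I would verify in each classical series that the finite quotient appearing in the tabulated descendants agrees with the one forced by $x\in A\subset T$. The remaining verifications (that no descendant is missed and that no listed entry fails to arise) reduce to the observation that every admissible subset of white nodes has been considered; this can be checked family by family by a short inductive count on $n$.
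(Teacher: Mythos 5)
Your proposal follows exactly the paper's (implicit) proof: the paper offers no argument for this theorem beyond the remark that Theorem \ref{theo:visual-descendants} combined with the classification in Table \ref{tab:Satake} yields the tables, so the mechanical enumeration of admissible erasures of white nodes from each extended Satake diagram that you describe is precisely what is intended (and is illustrated in Example \ref{ex:sym-desc}). One small correction to your bookkeeping: the abelian part of a descendant has rank $\rk G-\rk G_x^{ss}$, i.e.\ one \emph{less} than the number of erased nodes (since the extended diagram has $\rk G+1$ nodes), not equal to the number of erased nodes as your parenthetical states.
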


As for the notation, we use a bracket $\{$ or $\}$ when one out of several options must be chosen. These options may include $\varnothing$ meaning ``no pair'', as it will be the case when referring to a connected subdiagram formed only by white nodes, which can all be erased. For the sake of simplicity, in the classical cases we use the following notation:
\begin{itemize}
	\item $\sum (A,BD)$ for a sum of pairs of type $(A_{2t},B_t)$ or $(A_{2t-1},D_t)$, which can be possibly zero\footnote{The notation $BD$ has nothing to do with the notation of the non-reduced root system $BC$.}.
	\item $\sum (A,C)$ for a sum of pairs of type $(A_{2t-1},C_t)$ or $(A_1,A_1)$. 
	\item $\sum (A+A,A)$ for a sum or pairs of type $(A_t+A_t,A_t)$, which can be possibly zero. 
\end{itemize}
It is easily deduced by the erasing process what the constraints on the number of factors and their rank are. We sometimes use two names for the same Lie type, as $B_1$ or $C_1$, since it helps to interpret how it sits inside.

%

 \begin{remark}
 	Note that this recovers, over the complex numbers, the computation of descendants in \cite{ag-transactions} for types $(A,BD)$, and $(BD,BD+BD)$, and it gives extra information, as it says exactly how many factors of each Lie type may actually appear. To our knowledge, there were no previous computations of exceptional descendants.
 \end{remark}

\subsection{Some exceptional and Spin descendants}
\label{sec:exceptional-spin-descendants}

The result that we need for our applications in the next section is the following.
\begin{proposition}\label{prop:descendants-of-exceptional}
	The following are the only exceptional symmetric pairs where some neither pleasant nor nice pairs appear as descendants.
	\begin{itemize}
		\item Pairs of type $(D_4,B_{3})$ and $(D_5,D_4+\C)$ are  descendants of  $(E_6,D_5+\C)$.
		\item Pairs of type $(D_4,A_3+\C)$, $(D_5,B_3+B_1)$ and $(D_6,A_{5}+\C)$ are  descendants or parts of descendants of  $(E_7,D_6+A_1)$.
		\item Pairs of type $(E_7,E_6+\C)$ are  descendants of   $(E_7,E_6+\C)$.
		\item Pairs of type $(E_7,E_6+\C)$, $(D_7,B_5+B_1)$ and $(D_8,D_6+D_2)$  are  descendants of  $(E_8,E_7+A_1)$.
	\end{itemize}
\end{proposition}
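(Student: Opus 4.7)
The plan is to apply Theorem~\ref{theo:visual-descendants} case-by-case to each of the twelve exceptional complex symmetric pairs listed in Table~\ref{tab:Satake}, enumerating all ways to erase at least one white node (respecting bars) from the extended Satake diagram, and comparing the simple factors of the resulting (possibly disconnected) Satake diagrams against the pleasant/nice status tabulated in Tables~\ref{tab:pleasant-nice}, \ref{tab:pleasant-nice2} and \ref{tab:pleasant-nice3}. Since the complete lists of descendants will already be compiled in Tables~\ref{tab:cla-des} and \ref{tab:exc-des}, the proof reduces to reading off those tables and cross-referencing them with the list of simple symmetric pairs that are neither pleasant nor nice.

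First I would extract that list from the pleasantness/niceness tables: for simply-connected $G$ the simple symmetric pairs that are neither pleasant nor nice are $(D_r,A_{r-1}+\C)$ for all $r$, the $\Spin$ pairs $(\Spin_{r+s},\Spin_r\times_{\Z_2}\Spin_s)$ with $|r-s|>2$ outside the pleasant sub-family $r+s=4q+2$ with $r,s$ odd, the symplectic pair $(C_{2r},C_r+C_r)$, and the exceptional pair $(E_7,E_6+\C)$. The symplectic case can be discarded from the exceptional descendants at once: a double bond can only embed into extended Dynkin diagrams of type $G_2$ or $F_4$ (compare Table~\ref{tab:extended}), but the colourings of the extended Satake diagrams of $(G_2,A_1+A_1)$, $(F_4,B_4)$ and $(F_4,C_3+A_1)$ do not accommodate a $(C_{2r},C_r+C_r)$ subdiagram, as already observed at the end of Example~\ref{ex:sym-desc}.

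For the remaining candidates, the pairs with $G$ of type $G_2$, $F_4$ or $E_8$ have trivial centre, so every simple factor of a descendant is pleasant by Lemma~\ref{lemma:centerless}, with the sole exception of factors of type $(E_7,E_6+\C)$ or of a problematic $\Spin$ type, which is how the $(E_8,E_7+A_1)$ case enters the statement. The pairs $(E_6,C_4)$, $(E_6,A_5+A_1)$ and $(E_6,F_4)$ have cyclic centre of order three, so Propositions~\ref{prop:centerodd} and \ref{prop:centeroddeven}(a) force every descendant to stay pleasant, and a $(D_r,A_{r-1}+\C)$ component cannot arise from their extended Satake diagrams since the relevant nodes are not both white. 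The genuinely non-trivial cases are thus $(E_6,D_5+\C)$, $(E_7,D_6+A_1)$, $(E_7,E_6+\C)$ and $(E_8,E_7+A_1)$. A direct sweep through the erasure patterns of their extended Satake diagrams produces precisely the descendants claimed: $(D_4,B_3)$ and $(D_5,D_4+\C)$ from $(E_6,D_5+\C)$; $(D_4,A_3+\C)$, $(D_5,B_3+B_1)$ and $(D_6,A_5+\C)$ (sometimes as a component of a larger descendant) from $(E_7,D_6+A_1)$; $(E_7,E_6+\C)$ from itself upon erasing only the extra white node of the extended diagram; and $(E_7,E_6+\C)$ together with the $\Spin$-type pairs $(D_7,B_5+B_1)$ and $(D_8,D_6+D_2)$ from $(E_8,E_7+A_1)$. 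All the other erasure patterns in these four diagrams yield only pleasant or nice factors, as checked against the nice list in Section~\ref{sec:nice-summary}.

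The main obstacle is the bookkeeping in the $E_7$ and $E_8$ extended Satake diagrams, where there are many white nodes and hence many erasure patterns, and one must be careful with bar-paired nodes (which must be erased together) and with the role of the extra white node of the extended diagram. This is purely combinatorial on diagrams, and the cross-check with Tables~\ref{tab:cla-des} and \ref{tab:exc-des} turns it into a finite, mechanical verification.
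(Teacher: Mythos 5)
Your overall strategy is the same as the paper's: Proposition \ref{prop:descendants-of-exceptional} is proved there in one line, simply by cross-referencing the descendant lists of Tables \ref{tab:cla-des} and \ref{tab:exc-des} against the pleasant/nice classification of Tables \ref{tab:pleasant-nice}--\ref{tab:pleasant-nice3}, and your final fallback (``a finite, mechanical verification'' against those tables) is exactly that. Your identification of the neither-pleasant-nor-nice list and your exclusion of $(C_{2r},C_r+C_r)$ via the double-bond/colouring argument also match the paper (cf.\ the remark after Table \ref{tab:pleasant-nice3} and Example \ref{ex:sym-desc}).

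However, the structural shortcuts you interpose are genuinely wrong. Lemma \ref{lemma:centerless} and Propositions \ref{prop:centerodd}, \ref{prop:centeroddeven}(a) are statements about the pair $(G,H)$ in terms of $Z(G)$; they say nothing about a descendant $(G_x,H_x)$, because $Z(G_x)$ contains $x$ and the whole centre of the reductive group $G_x$, which is in general much larger than $Z(G)$ --- indeed $G_x$ of a nontrivial semisimple $x$ is never centreless. Concretely, $(C_3,A_2+\C)=(\Sp_6,\GL_3)$ occurs as (part of) a descendant of an $F_4$ pair and is \emph{not} pleasant according to Table \ref{tab:pleasant-nice} (only nice saves it); likewise the $E_6$ pairs have descendants with factors of type $(A_5,D_3)$, which for the relevant group is not pleasant but nice. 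So the claims ``every simple factor of a descendant is pleasant by Lemma \ref{lemma:centerless}'' and ``Propositions \ref{prop:centerodd} and \ref{prop:centeroddeven}(a) force every descendant to stay pleasant'' are false as stated; the fact that you must immediately carve out ``exceptions'' of $\Spin$ type and $(E_7,E_6+\C)$ for $E_8$ is a symptom that the argument does not work. The correct justification for those cases is not a centre argument but the same table cross-check you use elsewhere: each such descendant is either pleasant \emph{or nice}, and niceness (which depends only on the Lie algebra pair) is what handles the non-pleasant factors. If you delete the two centre-based sentences and rely uniformly on the tabulated check, your proof coincides with the paper's.
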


\begin{proof}
	This is a combination of the classification of pleasant and nice pairs in Tables \ref{tab:pleasant-nice}, \ref{tab:pleasant-nice2} and \ref{tab:pleasant-nice3}, with the computation of descendants in Tables  \ref{tab:cla-des} and \ref{tab:exc-des}.  Some of them can be already seen in Example \ref{ex:sym-desc}.
\end{proof}

\begin{lemma}\label{lemma:Spin-4q+2}
	The only descendants of the pair $(\Spin_{4q+2},\Spin_{4q+1})$ are itself and $(\Spin_{4q},\Spin_{4q})$. 
\end{lemma}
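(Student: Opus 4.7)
The plan is to apply Theorem \ref{theo:visual-descendants} directly, once we identify the extended Satake diagram. The pair $(\Spin_{4q+2},\Spin_{4q+1})$ corresponds to the case $(r,s)=(2q,0)$ of the family $(D_{r+s+1},B_r+B_s)$ of Table \ref{tab:Satake}, using that $\Spin_1\cong\Z_2$ is absorbed by the $\Z_2$-quotient. Since $r=2q\neq s-1,s$ for $q\geq 1$, the Satake diagram is of type $\dynkin{D}{Ia}$, with one white node and $2q$ black nodes. A rank-one consideration (or direct inspection of the standard inclusion $\Spin_{4q+1}\subset \Spin_{4q+2}$ as the stabilizer of a vector, which realizes the complexification of the real rank-one pair $(\Spin(4q+1,1),\Spin(4q+1))$) forces the white node to sit at the ``far leaf'' $\alpha_1$ of the $D_{2q+1}$ diagram, opposite the trivalent fork, so that the $2q$ black nodes form a connected subdiagram of type $D_{2q}$.

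I would then read off the extended Satake diagram. The lowest root $\alpha_0$ of $D_{2q+1}$ attaches to $\alpha_2$, creating a second fork with leaves $\alpha_0$ and $\alpha_1$, and by the lemma immediately preceding Theorem \ref{theo:visual-descendants} this extra node is white and carries no bar. Therefore the extended Satake diagram has exactly two white nodes, both attached to $\alpha_2$, and $2q$ black nodes forming the chain together with the original fork at the opposite end.

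Applying Theorem \ref{theo:visual-descendants}, the descendants correspond to the nonempty subsets of white nodes to erase (with no bars to constrain the erasure). There are only three such subsets. Erasing just $\alpha_0$ returns the original Satake diagram, and hence the pair $(\Spin_{4q+2},\Spin_{4q+1})$. Erasing just $\alpha_1$ yields an isomorphic Satake diagram in which $\alpha_0$ now assumes the role of the far leaf, so we again recover $(\Spin_{4q+2},\Spin_{4q+1})$. Erasing both white nodes leaves the $2q$ black nodes, which form the Dynkin diagram of type $D_{2q}$ coloured entirely black; by Remark \ref{rem:Satake-extreme} this represents the trivial pair $(\Spin_{4q},\Spin_{4q})$. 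This exhausts the possibilities and yields exactly the two claimed descendants. The only real subtlety is justifying the position of the white node in $\dynkin{D}{Ia}$ for our degenerate value $s=0$; once that is pinned down, the rest is a short case analysis on the extended diagram.
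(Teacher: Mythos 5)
Your proof is correct and follows essentially the same route as the paper, which simply displays the extended Satake diagram (one white affine node, one white leaf at $\alpha_1$, and $2q$ black nodes) and invokes Theorem \ref{theo:visual-descendants}. You spell out the details the paper leaves implicit — pinning down the position of the white node and enumerating the three admissible erasures — but the underlying argument is identical.
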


\begin{proof}
	As in Table \ref{tab:cla-des}, this reads easily from its extended Dynkin diagram. $$\dynkin[extended]{D}{o**.***}$$
\end{proof}

			{\tiny \phantom{\dynkin{A}{IIIa}} } 
\vspace{-.5cm}

\begin{landscape}\clearpage
\begin{table}[!h]
	\begin{center}\footnotesize
		\vspace{-.6cm}
		\begin{tabular}{lcl}
						\hline 	
			symmetric pair & extended Satake diagram & descendants\\ \hline \hline
$(A_{2n},B_n)$, $(A_{2n-1},D_n)$ & \dynkin[extended]{A}{I}  & \parbox[c]{200pt}{ $\sum (A,BD)$\medskip} \\	 \hline
$(A_{2n-1},C_n)$    & \dynkin[extended]{A}{II} &  \parbox[c]{200pt}{\medskip $\sum (A,C)$ \medskip} \\ \hline
			\parbox[c]{90pt}{$(A_{r+s+1},A_r+A_s+\C)$} &  	\parbox[c]{75pt}{\vspace{-.05cm} \begin{center} \dynkin[extended]{A}{IIIa} \end{center} } & \parbox[c]{265pt}{ \medskip $\!\!\!\!\left.\!\begin{array}{c} (A_{2t+1},A_t+A_t+\C)\\ \varnothing \end{array}\!\!\right\}\! + \sum (A+A,A) + \left\{ \!\!\begin{array}{c} (A_{m+2u+1},A_{m+u}+A_u+\C) \\ (A_{m-1},A_{m-1}) \end{array}\!\!\right.$\! \\ with $m=|r-s|$ and $t,u\geq 0$.} \\ \hline 
$(C_{n},A_{n-1}+\C)$ & \dynkin[extended]{C}{I} & \parbox[c]{200pt}{\medskip $\left.\!\begin{array}{c} (C_{t},A_{t-1}+\C)\\ \varnothing \end{array}\!\!\right\}\! + \sum (A,BD) + \left\{ \!\!\begin{array}{c} (C_{u},A_{u-1}+\C)\\ \varnothing \end{array}\!\!\right. \! $ \medskip} \\ \hline
$(C_{r+s},C_r+C_s)$, $r\neq s$ & \dynkin[extended]{C}{IIa} & \parbox[c]{180pt}{\medskip $(C_{2t},C_t+C_t) + \sum (A,C) + (C_{m+2u},C_{m+u}+C_{u})$\\ with $m=|r-s|$  \medskip} \\ \hline
%
%
%
\parbox[c]{75pt}{	$(B_{r+s},B_{r}+D_s)$ }& \parbox[c]{75pt}{ \vspace{-.15cm} \begin{center} \dynkin[extended]{B}{I} \end{center} } & \parbox[c]{245pt}{ \medskip $\left.\!\!\begin{array}{c}
	(D_{2t},D_t+D_t)\\
	(D_{2t+2},B_t+B_t)\\	
	\varnothing
\end{array}\!\!\right\}\!+\! \sum (A,BD)\! +\! \left\{\!\!\begin{array}{c}
(B_{m+u},B_{m+\frac{u}{2}}+D_{\frac{u}{2}})\\ 
(B_{m+u},D_{m+\frac{u-1}{2}}+B_{\frac{u+1}{2}})\end{array}\!\!\right.$\\ with $m=max(r-s,s-r-1)$, for $u$ even or odd, respectively. \medskip} \\ \hline
\parbox[c]{75pt}{\bigskip $(D_{r+s},D_{r}+D_s)$,\\ $(D_{r+s+1},B_{r}+B_s)$,\\ $r\neq s-1$ }&
\parbox[c]{75pt}{\vspace{-.15cm} \begin{center} \dynkin[extended]{D}{Ia} \end{center} }
&  \parbox[c]{245pt}{ \bigskip $\left.\!\!\begin{array}{c}
	(D_{2t},D_t+D_t)\\
	(D_{2t+2},B_t+B_t)\\	
	\varnothing
	\end{array}\!\!\right\}\!+\! \sum (A,BD)\! +\! \left\{\!\!\begin{array}{c}
	(D_{m+u},D_{m+\frac{u}{2}}+D_{\frac{u}{2}})\\
	(D_{m+u},B_{m+\frac{u-1}{2}}+B_{\frac{u-1}{2}})\end{array}\!\!\right.$\\ with $m=|r-s|$, for $u$ even or odd, respectively. \medskip}  \\ \hline
\parbox[c]{75pt}{	$(D_{r+s},D_{r}+D_s)$,\\ $(D_{r+s+1},B_{r}+B_s)$,\\ $r=s-1$ } & \parbox[c]{75pt}{\vspace{-.15cm} \begin{center} \dynkin[foldright]{D}[1]{ooo.oooo} \end{center} }&   \parbox[c]{245pt}{ \medskip $\left.\!\!\begin{array}{c}
	(D_{2t},D_t+D_t)\\
	(D_{2t+2},B_t+B_t)\\	
	\varnothing
	\end{array}\!\!\right\}\!+\! \sum (A,BD)\! +\! \left\{\!\!\begin{array}{c}
	(D_{2u-1},D_{u}+D_{u-1})\\
	(D_{2u},B_{u}+B_{u-1})\end{array}\!\!\right.$}  \\ \hline
\multirow{2}{*}{\parbox[t]{75pt}{\bigskip $(D_r, A_{r-1}+\C)$}} &  \dynkin[extended]{D}{IIIa} & \multirow{2}{*}{\parbox[t]{245pt}{ \medskip $\left.\!\!\begin{array}{c} (D_t,A_{t-1}+\C)\\ (A_1,A_1) \end{array}\!\!\right\}\! +\! \sum (A,C) +  \left\{ \!\!\begin{array}{c} (D_u,A_{u-1}+\C)\\ (A_1,A_1) \end{array}\!\!\right. $ \\ with $t$ even, and $u$ of the same parity as $r$.}  }\\
& \dynkin[foldright]{D}[1]{*o*.o*oo}  &   \\ 
\hline
\end{tabular}
\vspace{3pt}
\caption{Descendants of classical complex symmetric pairs}	\label{tab:cla-des}			
\end{center}
\end{table}
\vspace{0cm}
\end{landscape}


{\tiny \phantom{\dynkin{E}{III}}} 
\vspace{-2.2cm}

\begin{table}[!htbp]
\begin{center}\footnotesize
\begin{tabular}{lcl}\hline
	symmetric pair & extended Satake diagram & descendants (maximal before semicolon)\\ \hline \hline
$(G_2,A_1+A_1)$ & \dynkin[extended]{G}{I} & \parbox[c]{185pt}{\medskip $(A_2,B_1),(A_1,\C)+(A_1,\C)$; $(A_1,\C)$ \medskip} \\ \hline
  $(F_4,B_4)$  &  \dynkin[extended]{F}{I}  & \parbox[c]{185pt}{\medskip $(C_3,A_2+\C)+(A_1,\C)$, $(A_2,B_1)+(A_2,B_1)$, $(A_3,D_2)+(A_1,\C)$, $(B_4,B_2+D_2)$; \\
  	$(A_2,B_1)+(A_1,\C)$, $(B_3,D_2+B_1)$, $3\X(A_1,\C)$, $(B_2,B_1+\C)+(A_1,\C)$, and their summands\medskip} \\ \hline
  %
%
 %
$(F_4,C_3+A_1)$ & \dynkin[extended]{F}{II} & \parbox[c]{185pt}{\medskip$(B_4,B_1+D_3)$; $(B_3,B_3)$\medskip}  \\ \hline 
\begin{minipage}[c]{50pt} \vspace{0.2cm} $(E_{6},C_4)$ \end{minipage} & \begin{minipage}[c]{50pt} \vspace{0cm} \dynkin[extended]{E}{I}\end{minipage} &  \parbox[c]{185pt}{ \medskip $(A_5,D_3)+(A_1,\C)$, $3\X(A_2,B_1)$,   $(D_5,B_2+ B_2)$;\\ $\left.\!\!\begin{array}{c}
	(A_4,B_2)\\	
	2(A_2,B_1)
	\end{array}\right\} + (A_1,\C), $  $\left.\begin{array}{c}
	(A_3,D_2)\\	
	(A_2,B_1)
	\end{array}\right\} + 2(A_1,\C)$, \\
	$4\X (A_1,\C)$, and their summands\medskip } \\ \hline
%
\begin{minipage}[c]{50pt} \vspace{0.4cm} $(E_6,A_5+A_1)$ \end{minipage} & \begin{minipage}[c]{50pt} \vspace{0.2cm} \dynkin[extended]{E}{II} \end{minipage} & 
\parbox[c]{185pt}{ \vspace{0.2cm} $(D_5,D_2+D_3)$, $(A_1+A_1,A_1)+(A_3,D_2)$, $(A_2+A_2,A_2)+(A_2,B_1)$, $(A_5,A_2+A_2+\C)+(A_1,\C)$; $(A_1+A_1,A_1)+(A_2,B_1)$, $(A_3,A_1+A_1+\C)+(A_1,\C)$, $(D_4,B_2+B_1)$, $(A_1+A_1,A_1)+ 2\X(A_1,\C)$, $(A_2+A_2,A_2)+(A_1,\C)$, and their summands\vspace{.1cm}  }\\ \hline
\begin{minipage}[c]{50pt} \medskip $(E_6,F_4)$ \medskip\end{minipage} &  \begin{minipage}[c]{50pt} \medskip \dynkin[extended]{E}{IV} \medskip \end{minipage} &  \parbox[c]{185pt}{  $(D_5,B_4)$; $(D_4,D_4)$ }\\  \hline 
\begin{minipage}[c]{50pt} \medskip $(E_6,D_5+\C)$ \medskip \end{minipage} & \begin{minipage}[c]{50pt} \medskip \dynkin[extended]{E}{III} \medskip \end{minipage} & \parbox[c]{185pt}{ \medskip $(D_5,D_4+\C)$, $(A_5, A_4+A_1+\C)+(A_1,\C)$; $(D_4,B_3)$, $(A_3,A_3)+(A_1,\C)$,  $(A_5, A_4+A_1+\C)$, $(B_3,B_3)$ \medskip }\\  \hline 
\begin{minipage}[c]{50pt} \medskip $(E_{7},A_7)$ \medskip \end{minipage} & \begin{minipage}[c]{50pt}  \dynkin[extended]{E}{V}\medskip \end{minipage}& \parbox[c]{185pt}{ \medskip$(D_6,D_3+D_3)+(A_1,\C)$,\!\!\! $(A_5,D_3)+(A_2,B_1)$, $2\X(A_3,D_2) + (A_1,\C)$, $(E_6,C_4)$; $3\X(A_2,B_1)$, 
	$\left.\!\!\begin{array}{c}
	(D_5,B_2+B_2), (A_5,D_3)\\	
	(A_3,D_2)+(A_2,B_1), 2(A_2,B_1)
	\end{array}\right\} + (A_1,\C), $
	$(D_4,B_2+B_2)+2(A_1,\C)$, $(A_4,B_2)+(A_2,B_1)$, $(A_3,D_2)+3(A_1,\C)$,  $(A_2,B_1)+3\X(A_1,\C)$, $5\X(A_1,\C)$, and their summands.\medskip}  \\  \hline
$(E_7,D_6+A_1)$ & \dynkin[extended]{E}{VI} & \parbox[c]{185pt}{\medskip $(D_6,A_5+\C)+(A_1,\C)$, $(A_5,C_3)+(A_2,B_1)$, $(A_3,D_2)+(A_3,C_2)+(A_1,A_1)$, $(D_6,D_4+D_2)+(A_1,A_1)$;  $(A_5,C_3)+(A_1,\C)$, $3\X (A_1,A_1)$,\\
	 $\left.\!\!\!\!\!\begin{array}{c}
	(D_5,B_3+B_1), (D_4,A_3+\C), \\	
	(A_3,C_2) + \left\{\!\!\begin{array}{c}
	(A_3,D_2),  (A_2,B_1), \\	
	2(A_1,\C),\!(A_1,\C),\!\varnothing
	\end{array}\!\!\right\}
	\end{array}\!\!\!\!\right\}\!+\!(A_1,A_1),$\medskip }  \\ \hline
$(E_7,E_6+\C)$ & \dynkin[extended]{E}{VII} & \parbox[c]{185pt}{\medskip $(D_6,D_5+D_1) + (A_1,\C)$, $(E_6,F_4)$;  $(D_5,B_4)+ \left\{ (A_1,\C), \varnothing \right\}$, $(D_4,D_4)+ \left\{2(A_1,\C), (A_1,\C), \varnothing \right\}$, $(D_6,D_5+D_1)$} \\ \hline
$(E_8,D_8)$ & \dynkin[extended]{E}{VIII} & \parbox[c]{185pt}{ \medskip $(D_8,D_4+D_4)$, $(A_7,D_4)+(A_1,\C)$, $(A_8,B_4)$, $(A_5,B_3)+(A_2,B_1)+(A_1,\C)$,  $2\X (A_4,B_2)$, $(D_5,B_2+B_2)+(A_3,D_2)$,  $(E_6,C_4)+(A_2,B_1)$, $(E_7,A_7)+(A_1,\C)$; $(E_6,C_4)+(A_1,\C)$, \\
	$\left. \begin{array}{c}
	(D_5,B_2+B_2)\\	
	(D_4,D_2+D_2)
	\end{array} \right\} + \left\{ \begin{array}{c}
	(A_3,D_2), \; (A_2,B_1), \\	
	2\X(A_1,\C), \; (A_1,\C), \; \varnothing
	\end{array} \right. $\\
	$(A_4,B_2)+(A_3,D_2)$, $(A_4,B_2)+(A_2,B_1)+(A_1,\C)$, $2\X(A_3,D_2)+(A_1,\C)$, $3\X(A_2,B_1)+(A_1,\C)$, $2\X(A_2,B_1)+2\X(A_1,\C)$, $(A_2,B_1)+4\X(A_1,\C)$, $5\X(A_1,\C)$, and their summands\medskip}\\ \hline
$(E_8,E_7+A_1)$ & \dynkin[extended]{E}{IX} &  \parbox[c]{185pt}{ \medskip $(D_8,D_6+D_2)$, $(D_5,B_4)+(A_3,D_2)$,\\ $(E_6,F_4)+(A_2,B_1)$, $(E_7,E_6+\C)+(A_1,\C)$, $(D_7,B_5+B_1)$, $(D_6,D_5+D_1) + \big\{ (A_1,\C), \varnothing \big\}$; \\
$\left. \begin{array}{c}
	(D_5,B_4)\\	
	(D_4,D_4)
	\end{array} \right\} + \left\{ \begin{array}{c}
	(A_3,D_2), \; (A_2,B_1), \\	
	2\X(A_1,\C), \; (A_1,\C), \; \varnothing
\end{array} \right. $} \\ \hline
\end{tabular}				
\vspace{3pt}
\caption{Descendants of exceptional complex symmetric pairs}\label{tab:exc-des}
\end{center}
\end{table}

\section{Exceptional and Spin Gelfand pairs, and reduction of conjectures}
\label{sec:gelfand-pairs-conjectures}

We conclude by combining the results of Sections \ref{sec:pleasant} and \ref{sec:visual} and proving the Gelfand property for eight out of the twelve exceptional complex symmetric pairs, and for an infinite family of $\Spin$ pairs. 

\begin{theorem}\label{theo:8-Gelfand}
	The complex symmetric pairs $(G_2,A_1+A_1)$, $(F_4,B_4)$, $(F_4,C_3+A_1)$, $(E_6,C_4)$, $(E_6,A_5+A_1)$, $(E_7,A_7)$ and $(E_8,D_8)$, together with the infinite family $(\Spin_{4q+2},\Spin_{4q+1})$, are Gelfand pairs.
\end{theorem}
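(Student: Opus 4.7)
The plan is to apply Proposition~\ref{prop:criterion-complex}: for a complex symmetric pair $(G,H)$ with $G$ simply connected, the Gelfand property follows once the pair itself and all of its descendants are regular. Regularity of each pair that appears will be obtained from one of the two sufficient conditions developed in Section~\ref{sec:nice-summary}, namely pleasantness (which is regular by the lemma following Definition~\ref{def:pleasant}) and niceness (which is regular by Sekiguchi's result).

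For the seven exceptional pairs in the statement, both halves of the verification reduce to a table lookup. By Theorem~\ref{theo:exceptional-pleasant}, every pair in the statement is pleasant, with the sole exception of $(E_7,A_7)$, which is nice; hence each pair itself is regular. For the descendants, I invoke Proposition~\ref{prop:descendants-of-exceptional}, which asserts that the only exceptional symmetric pairs admitting a descendant that is neither pleasant nor nice are $(E_6,D_5+\C)$, $(E_7,D_6+A_1)$, $(E_7,E_6+\C)$ and $(E_8,E_7+A_1)$ --- none of which appears in the statement. Consequently, for each of our seven pairs, Table~\ref{tab:exc-des} read together with Tables~\ref{tab:pleasant-nice}--\ref{tab:pleasant-nice3} exhibits every descendant as pleasant or nice, hence regular, and Proposition~\ref{prop:criterion-complex} produces the Gelfand property.

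For the family $(\Spin_{4q+2},\Spin_{4q+1})$, note first that $\Spin_{4q+2}$ is simply connected. Writing the pair in the form $(\Spin_{r+s},\Spin_r\times_{\Z_2}\Spin_s)$ with $r=4q+1$ and $s=1$, the parameters satisfy: both $r,s$ are odd, $r\neq s$ for $q\geq 1$, and $r+s=4q+2$. The classification of pleasant Spin pairs carried out in Section~\ref{sec:spin-pairs} therefore shows that the pair is pleasant, and hence regular. By Lemma~\ref{lemma:Spin-4q+2}, its only descendants are the pair itself and the trivial pair $(\Spin_{4q},\Spin_{4q})$, and the latter is regular by Example~\ref{ex:regular-extreme}. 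A further application of Proposition~\ref{prop:criterion-complex} closes this case.

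The argument is essentially bookkeeping once Sections~\ref{sec:pleasant} and~\ref{sec:visual} are in hand. The one genuine checkpoint is $(E_7,A_7)$, which is not pleasant and whose row in Table~\ref{tab:exc-des} is the longest; every descendant listed there must be verified to be pleasant or nice, and it is precisely Proposition~\ref{prop:descendants-of-exceptional} that packages this verification cleanly by confirming $(E_7,A_7)$ is not among the pairs with a problematic descendant. The degenerate boundary case $q=0$ of the $\Spin$ family, where both factors become abelian, is handled separately and directly.
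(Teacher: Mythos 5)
Your argument is correct and follows the paper's own proof essentially verbatim: regularity of each pair via pleasantness (Theorem \ref{theo:exceptional-pleasant}) or niceness for $(E_7,A_7)$, regularity of all descendants via Proposition \ref{prop:descendants-of-exceptional} and Lemma \ref{lemma:Spin-4q+2}, and then Proposition \ref{prop:criterion-complex}. The only addition is your explicit parameter check $r=4q+1$, $s=1$ for the $\Spin$ family, which the paper leaves implicit but which matches the pleasantness classification of Section \ref{sec:spin-pairs}.
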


\begin{proof}
	From Proposition \ref{prop:descendants-of-exceptional}, all exceptional symmetric pairs but $(E_6,F_4)$ and $(E_6,D_5+\C)$, $(E_7,E_6+\C)$, $(E_7,D_6+A_1)$ and $(E_8,E_7+A_1)$ are regular and have pleasant and/or nice (hence regular) descendants. The same holds for the pair $(\Spin_{4q+2},\Spin_{4q+1})$ because of Lemma \ref{lemma:Spin-4q+2}. By Proposition \ref{prop:criterion-complex} we have that they are Gelfand pairs.
\end{proof}

Moreover, we can reduce the proof of the Gelfand property for the remaining four exceptional symmetric pairs to a statement about the regularity of one exceptional and eight classical pairs. Combining Proposition \ref{prop:descendants-of-exceptional} with Proposition \ref{prop:criterion-complex} and Lemma \ref{lem:quot-sym-pair} gives the following.

\begin{proposition}\label{prop:exceptional-Gelfand}
	All exceptional complex symmetric pairs are Gelfand pairs if $(D_4,B_3)$, $(D_4,A_3+\C)$,  $(D_5,D_4+\C)$, $(D_6,A_{5}+\C)$, $(D_7,B_5+B_1)$,  $(E_7,E_6+\C)$ and $(D_8,D_6+D_2)$ are regular. 
\end{proposition}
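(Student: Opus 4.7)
The plan is to combine Theorem \ref{theo:8-Gelfand} with the Aizenbud--Gourevitch criterion for the remaining pairs. By that theorem, seven of the twelve exceptional complex symmetric pairs are unconditionally Gelfand, so I only need to address the five remaining pairs
\[
(E_6,F_4),\ (E_6,D_5+\C),\ (E_7,D_6+A_1),\ (E_7,E_6+\C),\ (E_8,E_7+A_1)
\]
together with their non-simply-connected quotient forms.

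For the simply connected version of each of these five pairs, the approach is to verify the hypotheses of Proposition \ref{prop:criterion-complex}, namely that the pair itself and all its descendants are regular. The pair itself is pleasant (hence regular) in four of the five cases, by Table \ref{tab:pleasant-nice3}; the one exception is $(E_7,E_6+\C)$, which is precisely one of the pairs postulated to be regular in the hypothesis. For the descendants, Proposition \ref{prop:descendants-of-exceptional} pins down, for each of the five pairs, exactly which descendants are neither pleasant nor nice. Comparing that list with the hypothesis, every such descendant either appears explicitly among $(D_4,B_3)$, $(D_4,A_3+\C)$, $(D_5,D_4+\C)$, $(D_6,A_5+\C)$, $(D_7,B_5+B_1)$, $(E_7,E_6+\C)$, $(D_8,D_6+D_2)$, or is $(D_5,B_3+B_1)$. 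The latter is not in the hypothesis but is readily seen to be pleasant via the Spin analysis of Section \ref{sec:spin-pairs}: as the Spin pair $(\Spin_{10},\Spin_7\times_{\Z_2}\Spin_3)$, it satisfies $r+s=10\equiv 2\pmod 4$ with $r=7$ and $s=3$ distinct odd integers, which is exactly the pleasant case. Every other descendant is either pleasant or nice by Tables \ref{tab:pleasant-nice2} and \ref{tab:pleasant-nice3} (niceness giving regularity via the lemma of \cite{aizenbud-13} recalled in Section \ref{sec:nice-summary}).

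Putting these together, each of the five remaining simply connected pairs satisfies the hypotheses of Proposition \ref{prop:criterion-complex} and is therefore a Gelfand pair. To obtain the Gelfand property for non-simply-connected forms, I then appeal to Lemma \ref{lem:quot-sym-pair}, which transfers the Gelfand property from a simply connected symmetric pair to any of its quotient symmetric pairs, thereby covering all twelve exceptional complex symmetric pairs in every possible form.

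The argument has no deep obstacle; it is almost entirely a bookkeeping exercise built on the structural results already in place. The only non-routine verification is confirming that the Spin pair $(D_5,B_3+B_1)$ is genuinely pleasant and does not need to be added to the hypothesis — a direct application of the pleasantness criterion for Spin pairs. Beyond that, the care needed is in systematically matching the descendant tables (Table \ref{tab:exc-des}) against the pleasantness and niceness tables (Tables \ref{tab:pleasant-nice2} and \ref{tab:pleasant-nice3}) to ensure that no further ``missing'' descendant sneaks in.
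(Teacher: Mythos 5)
Your proposal is correct and follows essentially the same route as the paper, whose proof is simply the one-line combination of Proposition \ref{prop:descendants-of-exceptional}, Proposition \ref{prop:criterion-complex} and Lemma \ref{lem:quot-sym-pair}. Your extra step for $(D_5,B_3+B_1)$ is both correct and welcome: although Proposition \ref{prop:descendants-of-exceptional} lists it among the ``neither pleasant nor nice'' descendants, it is in fact pleasant by the Spin criterion ($(\Spin_{10},\Spin_7\times_{\Z_2}\Spin_3)$ has $r+s=4q+2$ with $r\neq s$ odd) and by Table \ref{tab:pleasant-nice2}, which is exactly why it can be omitted from the hypothesis; the same observation shows that $(D_7,B_5+B_1)=(\Spin_{14},\Spin_{11}\times_{\Z_2}\Spin_3)$ is pleasant as well, so its appearance in the hypothesis is harmless but redundant.
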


Finally, we give a sufficient condition for van Dijk's conjecture and reduce Aizenbud-Gourevitch conjecture to the regularity of one exceptional pair and some classical families. Combining Tables \ref{tab:pleasant-nice}, \ref{tab:pleasant-nice2}, \ref{tab:pleasant-nice3}, \ref{tab:cla-des} and \ref{tab:exc-des} with  Proposition \ref{prop:criterion-complex}, Lemma \ref{lem:quot-sym-pair}, Theorem \ref{theo:8-Gelfand} and Proposition \ref{prop:exceptional-Gelfand} gives the following.

\begin{proposition}\label{prop:conjectures}
All complex symmetric pairs are regular (Aizenbud-Gourevitch conjecture) and Gelfand pairs (implying van Dijk's conjecture) if the families of pairs $(D_r,A_{r-1}+\C)$, $(C_{2r},C_r+C_r)$, the families $(\Spin_{r+s},\Spin_r \times_{\Z_2} \Spin_s)$ for $|r-s|>2$, and $r,s$ even if $r+s\neq 4q+2$, and the pair $(E_7,E_6+\C)$ are regular. 
\end{proposition}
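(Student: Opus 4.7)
The plan is to assemble the statement from the data already collected in the tables and in the earlier propositions, going case by case through the classification of complex symmetric pairs. First I would use the reduction in Section~\ref{sec:covering-quotient}, namely the decomposition $G=Z(G)_o\times_F G^{ss}$ together with Lemma~\ref{lem:quot-sym-pair}, to reduce the problem to simply connected simple groups: it is enough to prove the Gelfand property for every symmetric pair $(G,H,\theta)$ with $G$ simple and simply connected, since every complex reductive symmetric pair is then obtained from products and finite central quotients of such pairs.

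Next, for each simple simply connected $G$ I would inspect the list of Satake diagrams in Table~\ref{tab:Satake}. By Tables~\ref{tab:pleasant-nice}, \ref{tab:pleasant-nice2} and \ref{tab:pleasant-nice3}, every simply-connected simple symmetric pair is either pleasant or nice, and therefore regular, except for the following list: the linear quotient families already yielding pleasant simply-connected pairs aside, the unresolved ones are exactly the pairs $(C_{2r},C_r+C_r)$, $(D_r,A_{r-1}+\C)$, $(E_7,E_6+\C)$ and the $\Spin$-pairs $(\Spin_{r+s},\Spin_r\times_{\Z_2}\Spin_s)$ outside the pleasant range; combining the pleasantness analysis in Section~\ref{sec:spin-pairs} with Table~\ref{tab:pleasant-nice2} one sees that the only $\Spin$-pairs not already covered are precisely $(\Spin_{r+s},\Spin_r\times_{\Z_2}\Spin_s)$ with $|r-s|>2$, and with $r,s$ even when $r+s\neq 4q+2$. (The cases $|r-s|\le 2$ are nice; the case $r+s=4q+2$ with $r,s$ odd is pleasant by the $\Spin$ computation; and $(D_r,A_{r-1}+\C)$ is listed separately.) This identifies the classical ``bad'' list exactly as in the statement.

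With the list of potentially non-regular pairs pinned down, I would then use Tables~\ref{tab:cla-des} and~\ref{tab:exc-des} to verify that the descendants of any pleasant or nice pair lie either in the pleasant/nice class or again in the hypothesised regular list. For classical pairs this is a direct reading of the descendant tables: descendants of type $(A,BD)$, $(A,C)$, $(A+A,A)$, $(BD,BD+BD)$ etc.\ stay within the pleasant/nice families or fall into the listed bad classes; descendants of $\Spin$-pairs in the pleasant range stay within pleasant $\Spin$-pairs and summands. For the exceptional pairs, Proposition~\ref{prop:exceptional-Gelfand} already pinpoints the obstructions as $(D_4,B_3)$, $(D_4,A_3+\C)$, $(D_5,D_4+\C)$, $(D_6,A_5+\C)$, $(D_7,B_5+B_1)$, $(E_7,E_6+\C)$ and $(D_8,D_6+D_2)$; each of these belongs to the families $(D_r,A_{r-1}+\C)$ or $(\Spin_{r+s},\Spin_r\times_{\Z_2}\Spin_s)$ with $|r-s|>2$ in the non-pleasant regime, or to the pair $(E_7,E_6+\C)$ itself, all of which appear in the hypothesis of the proposition.

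Assuming the stated families and the pair $(E_7,E_6+\C)$ are regular, one concludes as follows: every complex symmetric pair $(G,H,\theta)$ and every one of its descendants is then regular (pleasant or nice pairs are regular by the lemmas in Section~\ref{sec:nice-summary} and~\ref{sec:pleasant}, and the remaining descendants lie in the hypothesised regular families). Applying Proposition~\ref{prop:criterion-complex} yields the Gelfand property for simply-connected $G$, and then Lemma~\ref{lem:quot-sym-pair} together with the product reduction of Section~\ref{sec:covering-quotient} extends it to arbitrary reductive $G$, proving van Dijk's conjecture. Aizenbud--Gourevitch regularity of every complex symmetric pair follows similarly from the same case analysis. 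The only real work is the bookkeeping verification that every descendant appearing in Tables~\ref{tab:cla-des} and~\ref{tab:exc-des} either belongs to a pleasant or nice family or falls into the stated exceptional list; this is routine given the tables, and is the main (but purely combinatorial) obstacle.
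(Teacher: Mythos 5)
Your proposal is correct and follows essentially the same route as the paper, whose proof of this proposition is precisely the combination of Tables \ref{tab:pleasant-nice}--\ref{tab:pleasant-nice3} and \ref{tab:cla-des}--\ref{tab:exc-des} with Proposition \ref{prop:criterion-complex}, Lemma \ref{lem:quot-sym-pair}, Theorem \ref{theo:8-Gelfand} and Proposition \ref{prop:exceptional-Gelfand}; your write-up just makes the bookkeeping explicit.
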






\bibliographystyle{alpha}
\bibliography{bib-gelfand}

\begin{thebibliography}{{Yok}09}

\bibitem[Ada14]{adams-14}
Jeffrey Adams.
\newblock The real {C}hevalley involution.
\newblock {\em Compos. Math.}, 150(12):2127--2142, 2014.

\bibitem[AG09]{ag-duke}
Avraham Aizenbud and Dmitry Gourevitch.
\newblock Generalized {H}arish-{C}handra descent, {G}elfand pairs, and an
  {A}rchimedean analog of {J}acquet-{R}allis's theorem.
\newblock {\em Duke Math. J.}, 149(3):509--567, 2009.
\newblock With an appendix by the authors and Eitan Sayag.

\bibitem[AG10]{ag-transactions}
Avraham Aizenbud and Dmitry Gourevitch.
\newblock Some regular symmetric pairs.
\newblock {\em Trans. Amer. Math. Soc.}, 362(7):3757--3777, 2010.

\bibitem[AGS08]{ags-compositio}
Avraham Aizenbud, Dmitry Gourevitch, and Eitan Sayag.
\newblock {$({\rm GL}_{n+1}(F),{\rm GL}_n(F))$} is a {G}elfand pair for any
  local field {$F$}.
\newblock {\em Compos. Math.}, 144(6):1504--1524, 2008.

\bibitem[AGS09]{ags-09}
Avraham Aizenbud, Dmitry Gourevitch, and Eitan Sayag.
\newblock {$({\rm O}(V\oplus F),{\rm O}(V))$} is a {G}elfand pair for any
  quadratic space {$V$} over a local field {$F$}.
\newblock {\em Math. Z.}, 261(2):239--244, 2009.

\bibitem[Aiz13]{aizenbud-13}
Avraham Aizenbud.
\newblock A partial analog of the integrability theorem for distributions on
  {$p$}-adic spaces and applications.
\newblock {\em Israel J. Math.}, 193(1):233--262, 2013.

\bibitem[Ara62]{araki}
Sh\^or\^o Araki.
\newblock On root systems and an infinitesimal classification of irreducible
  symmetric spaces.
\newblock {\em J. Math. Osaka City Univ.}, 13:1--34, 1962.

\bibitem[AV92]{adams-vogan-92}
Jeffrey Adams and David~A. Vogan, Jr.
\newblock Harish-{C}handra's method of descent.
\newblock {\em Amer. J. Math.}, 114(6):1243--1255, 1992.

\bibitem[AV16]{adams-vogan}
Jeffrey Adams and David~A. Vogan, Jr.
\newblock Contragredient representations and characterizing the local
  {L}anglands correspondence.
\newblock {\em Amer. J. Math.}, 138(3):657--682, 2016.

\bibitem[AvD06]{aparicio-vanDijk}
Sofía Aparicio and Gerrit van Dijk.
\newblock Complex generalized gelfand pairs.
\newblock {\em Tambov University Reports, Series: Natural and Technical
  Sciences}, 11(1):6--12, 2006.

\bibitem[Bor61]{borel-61}
Armand Borel.
\newblock Sous-groupes commutatifs et torsion des groupes de {L}ie compacts
  connexes.
\newblock {\em T\^{o}hoku Math. J. (2)}, 13:216--240, 1961.

\bibitem[Bou68]{bourbaki456}
Nicolas Bourbaki.
\newblock {\em \'{E}l\'{e}ments de math\'{e}matique. {F}asc. {XXXIV}. {G}roupes
  et alg\`ebres de {L}ie. {C}hapitre {IV}: {G}roupes de {C}oxeter et syst\`emes
  de {T}its. {C}hapitre {V}: {G}roupes engendr\'{e}s par des r\'{e}flexions.
  {C}hapitre {VI}: syst\`emes de racines}.
\newblock Actualit\'{e}s Scientifiques et Industrielles, No. 1337. Hermann,
  Paris, 1968.

\bibitem[Bum13]{bump}
Daniel Bump.
\newblock {\em Lie groups}, volume 225 of {\em Graduate Texts in Mathematics}.
\newblock Springer, New York, second edition, 2013.

\bibitem[{Car}15]{carmeli2015}
Shachar {Carmeli}.
\newblock {On the Stability and Gelfand Property of Symmetric Pairs}.
\newblock {\em arXiv e-prints}, arXiv:1511.01381, November 2015.

\bibitem[Cas89]{casselman-89}
W.~Casselman.
\newblock Canonical extensions of {H}arish-{C}handra modules to representations
  of {$G$}.
\newblock {\em Canad. J. Math.}, 41(3):385--438, 1989.

\bibitem[Die75]{dieudonne}
J.~Dieudonn\'{e}.
\newblock {\em \'{E}l\'{e}ments d'analyse. {T}ome {VI}. {C}hapitre {XXII}}.
\newblock Gauthier-Villars \'{E}diteur, Paris, 1975.
\newblock Cahiers Scientifiques, Fasc. XXXIX.

\bibitem[Dyn00]{dynkin}
E.~B. Dynkin.
\newblock Semisimple subalgebras of semisimple {L}ie algebras.
\newblock In {\em Selected papers of {E}. {B}. {D}ynkin with commentary}, pages
  175--308. American Mathematical Society, Providence, RI; International Press,
  Cambridge, MA, 2000.

\bibitem[GK75]{gelfand-kazhdan}
I.~M. Gelfand and D.~A. Kajdan.
\newblock Representations of the group {${\rm GL}(n,K)$} where {$K$} is a local
  field.
\newblock In {\em Lie groups and their representations ({P}roc. {S}ummer
  {S}chool, {B}olyai {J}\'{a}nos {M}ath. {S}oc., {B}udapest, 1971)}, pages
  95--118. Halsted, New York, 1975.

\bibitem[GL83]{gorenstein-lyons}
Daniel Gorenstein and Richard Lyons.
\newblock The local structure of finite groups of characteristic {$2$} type.
\newblock {\em Mem. Amer. Math. Soc.}, 42(276):vii+731, 1983.

\bibitem[Gro91]{gross}
Benedict~H. Gross.
\newblock Some applications of {G}el\cprime fand pairs to number theory.
\newblock {\em Bull. Amer. Math. Soc. (N.S.)}, 24(2):277--301, 1991.

\bibitem[Hum95]{humphreys}
James~E. Humphreys.
\newblock {\em Conjugacy classes in semisimple algebraic groups}, volume~43 of
  {\em Mathematical Surveys and Monographs}.
\newblock American Mathematical Society, Providence, RI, 1995.

\bibitem[LS99]{levasseur-stafford}
T.~Levasseur and J.~T. Stafford.
\newblock Invariant differential operators on the tangent space of some
  symmetric spaces.
\newblock {\em Ann. Inst. Fourier (Grenoble)}, 49(6):1711--1741, 1999.

\bibitem[Ric82]{richardson}
R.~W. Richardson.
\newblock Orbits, invariants, and representations associated to involutions of
  reductive groups.
\newblock {\em Invent. Math.}, 66(2):287--312, 1982.

\bibitem[Sat60]{satake}
Ichir\^{o} Satake.
\newblock On representations and compactifications of symmetric {R}iemannian
  spaces.
\newblock {\em Ann. of Math. (2)}, 71:77--110, 1960.

\bibitem[{Say}08]{sayag}
Eitan {Sayag}.
\newblock {(GL(2n,C),SP(2n,C)) is a Gelfand Pair}.
\newblock {\em arXiv e-prints}, arXiv:0805.2625, May 2008.

\bibitem[Sek85]{sekiguchi}
Jir\=o Sekiguchi.
\newblock Invariant spherical hyperfunctions on the tangent space of a
  symmetric space.
\newblock In {\em Algebraic groups and related topics ({K}yoto/{N}agoya,
  1983)}, volume~6 of {\em Adv. Stud. Pure Math.}, pages 83--126.
  North-Holland, Amsterdam, 1985.

\bibitem[SS70]{springer-steinberg}
T.~A. Springer and R.~Steinberg.
\newblock Conjugacy classes.
\newblock In {\em Seminar on {A}lgebraic {G}roups and {R}elated {F}inite
  {G}roups ({T}he {I}nstitute for {A}dvanced {S}tudy, {P}rinceton, {N}.{J}.,
  1968/69)}, Lecture Notes in Mathematics, Vol. 131, pages 167--266. Springer,
  Berlin, 1970.

\bibitem[SZ11]{sun-zhu}
Binyong Sun and Chen-Bo Zhu.
\newblock A general form of {G}elfand-{K}azhdan criterion.
\newblock {\em Manuscripta Math.}, 136(1-2):185--197, 2011.

\bibitem[Tho84]{thomas}
Erik G.~F. Thomas.
\newblock The theorem of {B}ochner-{S}chwartz-{G}odement for generalised
  {G}el\cprime fand pairs.
\newblock In {\em Functional analysis: surveys and recent results, {III}
  ({P}aderborn, 1983)}, volume~90 of {\em North-Holland Math. Stud.}, pages
  291--304. North-Holland, Amsterdam, 1984.

\bibitem[vD86]{vanDijk-86}
G.~van Dijk.
\newblock On a class of generalized {G}el\cprime fand pairs.
\newblock {\em Math. Z.}, 193(4):581--593, 1986.

\bibitem[vD08]{vanDijk-08}
G.~van Dijk.
\newblock {\em Gelfand pairs and beyond}, volume~11 of {\em COE Lecture Note}.
\newblock Kyushu University, Faculty of Mathematics, Fukuoka, 2008.
\newblock Math-for-Industry (MI) Lecture Note Series.

\bibitem[Vus74]{vust}
T.~Vust.
\newblock Op\'eration de groupes r\'eductifs dans un type de c\^ones presque
  homog\`enes.
\newblock {\em Bull. Soc. Math. France}, 102:317--333, 1974.

\bibitem[Wal88]{wallach}
Nolan~R. Wallach.
\newblock {\em Real reductive groups. {I}}, volume 132 of {\em Pure and Applied
  Mathematics}.
\newblock Academic Press, Inc., Boston, MA, 1988.

\bibitem[{Yok}09]{yokota}
Ichiro {Yokota}.
\newblock {Exceptional Lie groups}.
\newblock {\em ArXiv e-print arXiv:0902.0431}, February 2009.

\end{thebibliography}

\end{document}